\documentclass[11pt,a4paper,reqno]{article}

\usepackage[a4paper,width=140mm,top=25mm,bottom=30mm]{geometry}
\usepackage{hyperref, enumitem}
\linespread{1.1}

% **********************PRÉAMBULE*********************************

% ************************Mise en page****************************

% *****************Prise en charge du Français *******************

\usepackage[utf8]{inputenc}
% permet à LaTeX de lire tous les caractères supportés par utf8,
% notamment les lettres accentuées.
\usepackage[T1]{fontenc}
% gestion des caractères spéciaux
\usepackage[english]{babel}
% le texte généré automatiquement par LaTeX sera en anglais, la
% typographie suivra les conventions anglaises,...
\usepackage{xcolor}
%Permet d'utiliser \textcolor, utile surtout lors de la phase de rédaction...
\usepackage{float}
%Permet de placer les images dans une position fixe
\usepackage{stmaryrd}
%Permet les intervalles entiers
\usepackage{mathrsfs} 
% ************Définitions, Remarques, Théorèmes********************** 
\usepackage{amsthm}

\theoremstyle{plain}
\newtheorem{theorem}{Theorem}[section]
\newtheorem{corollary}[theorem]{Corollary}
\newtheorem{proposition}[theorem]{Proposition}

\newtheorem{lemma}[theorem]{Lemma}
\newtheorem{claim}[theorem]{Claim}
% Les propositions suivent les mêmes règles de numérotation que
% theorem

\theoremstyle{remark} 
\newtheorem{remark}[theorem]{Remark}
\newtheorem*{notation}{Notation}

\theoremstyle{definition}

\newcommand{\defini}{\textit}

% *************Bold paragraph in amsart*********************
%\renewcommand{\paragraph}[1]{\vspace*{5mm}\noindent\textbf{#1}~}

% *************Pour les formules mathématiques*********************
\usepackage{amsmath}
\usepackage{amssymb}
\usepackage{dsfont} 
%Pour pouvoir utiliser \mathds{1} (fonction indicatrice)
\usepackage{mathtools}
\mathtoolsset{showonlyrefs}
%Pour que les numéros d'équation apparaissent seulement pour celles qui
%sont référencées.

% *****************Arrows****************************************
\usepackage{MnSymbol} %Cool infinity and arrows
\usepackage{calc} %allows doing calculations in macros

\newlength{\arrow}
\settowidth{\arrow}{\scriptsize$.....$} %Minimal length arrows

\usepackage{xargs}
\newcommand*{\lr}[2][]{\xleftrightarrow[\ \text{\raisebox{3pt}{$#1$}}\ ]{#2}}
\newcommand{\nlr}[2][]{\mathrel{\ooalign{$\xleftrightarrow[\ \text{\raisebox{3pt}{$#1$}}\ ]{\mathmakebox[\maxof{\arrow}{\widthof{\scriptsize $#2$}}]{#2}}$\cr\hidewidth\raisebox{.13em}{\rotatebox{-20}{\scalebox{0.6}{$/$}}}\hidewidth}}}

% ******************Tableau****************************************
\usepackage{tabularx}
\usepackage{graphicx}
\usepackage{caption}
\captionsetup{width=0.85\textwidth}
\usepackage{subcaption}
\setlength{\parskip}{1pt}

\begin{document}
	% ***************CORPS DU DOCUMENT***********
	% *****************Caractéristiques du document *******************
	\title{Supercritical percolation on graphs of polynomial growth}
	\date{\today}
	
	\author{Daniel Contreras\thanks{ETH Zürich, Rämistrasse 101, 8092 Zurich Switzerland}
		\and
		Sébastien Martineau\thanks{LPSM, Sorbonne Université, 4 place Jussieu, 75005 Paris France \newline
			daniel.contreras@math.ethz.ch\hfill
			sebastien.martineau@lpsm.paris\hfill
			vincent.tassion@math.ethz.ch
		}
		\and
		Vincent Tassion\footnotemark[1]} 
	
	% \author{Daniel Contreras}
	
	% \address{ETH Zürich, Rämistrasse 101, 8092 Zurich Switzerland}
	
	% \email{daniel.contreras@math.ethz.ch}
	
	% \author{Sébastien Martineau}
	
	% \address{LPSM, Sorbonne Université, 4 place Jussieu, 75005 Paris France }
	% \email{sebastien.martineau@lpsm.paris}
	
	%   \author{Vincent Tassion}
	
	% \address{ETH Zürich, Rämistrasse 101, 8092 Zurich Switzerland}
	
	% \email{vincent.tassion@math.ethz.ch}

	\maketitle
	
	\begin{quote}\it 
		\hfill  À la mémoire de Claude Danthony (1961--2021)
	\end{quote}
	
	\begin{abstract}
		We consider Bernoulli percolation on transitive graphs of polynomial growth.  In the subcritical regime ($p<p_c$), it is well known that the connection probabilities decay exponentially fast. In the present paper, we study the supercritical phase ($p>p_c$) and prove the exponential decay of the truncated connection probabilities (probabilities that two points are connected by an open path, but not to infinity). This sharpness result was established by~\cite{chayes1987bernoulli} on $\mathbb Z^d$ and uses the difficult slab result of Grimmett and Marstrand. However, the techniques used there are very specific to the hypercubic lattices and do not extend to more general geometries. In this paper, we develop new robust techniques based on the recent progress in the theory of sharp thresholds and the sprinkling method of Benjamini and Tassion. On~$\mathbb Z^d$, these methods can be used to produce a new proof of the slab result of Grimmett and Marstrand.
	\end{abstract}
	
	\maketitle
	\vspace{1cm}
	\begin{figure}[h!]
		\centering
		\begin{subfigure}[b]{0.3\textwidth}
			\includegraphics[width=\textwidth]{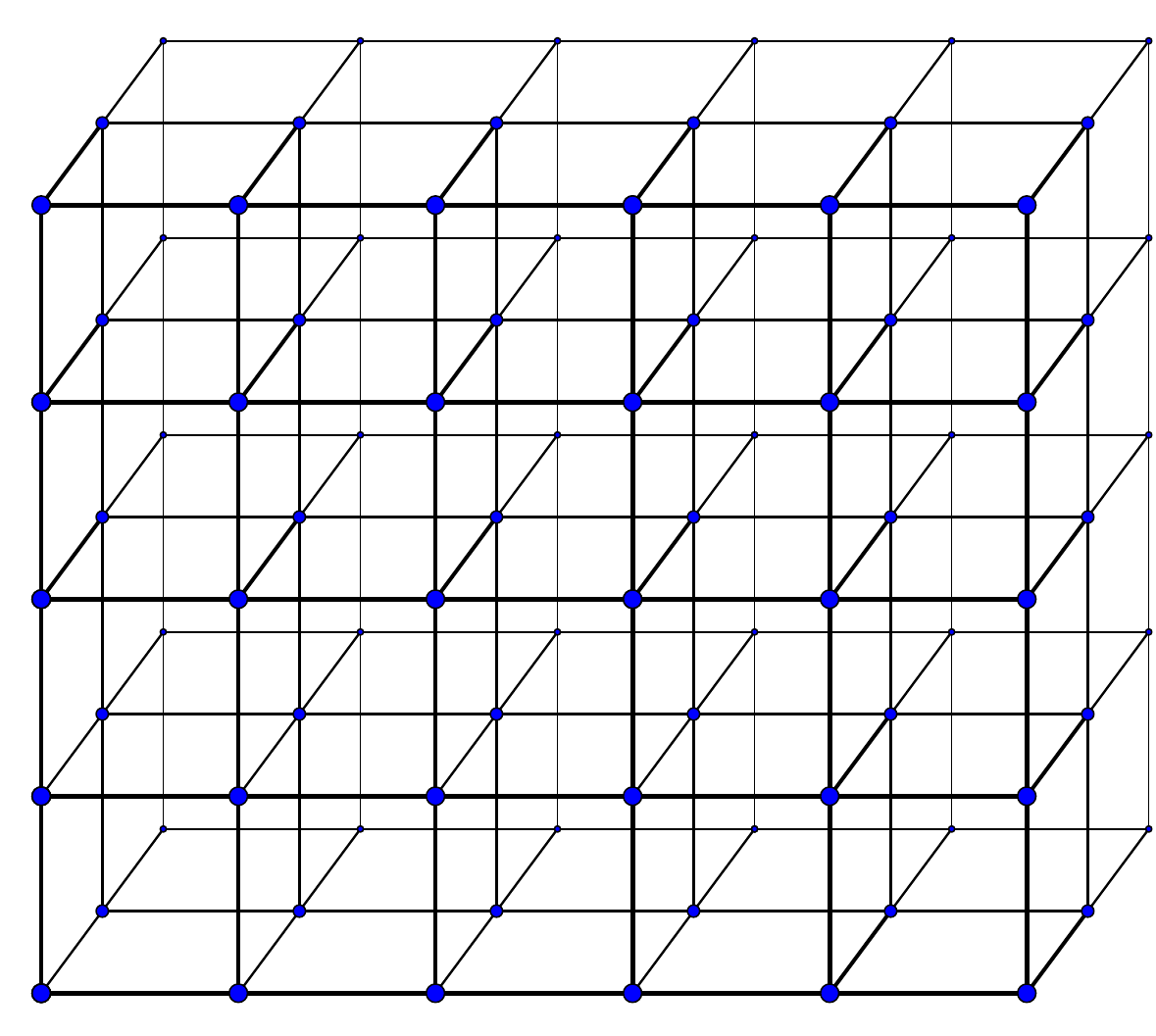}
		\end{subfigure}
		~ %add desired spacing between images, e. g. ~, \quad, \qquad, \hfill etc. 
		%(or a blank line to force the subfigure onto a new line)
		\begin{subfigure}[b]{0.3\textwidth}
			\includegraphics[width=\textwidth]{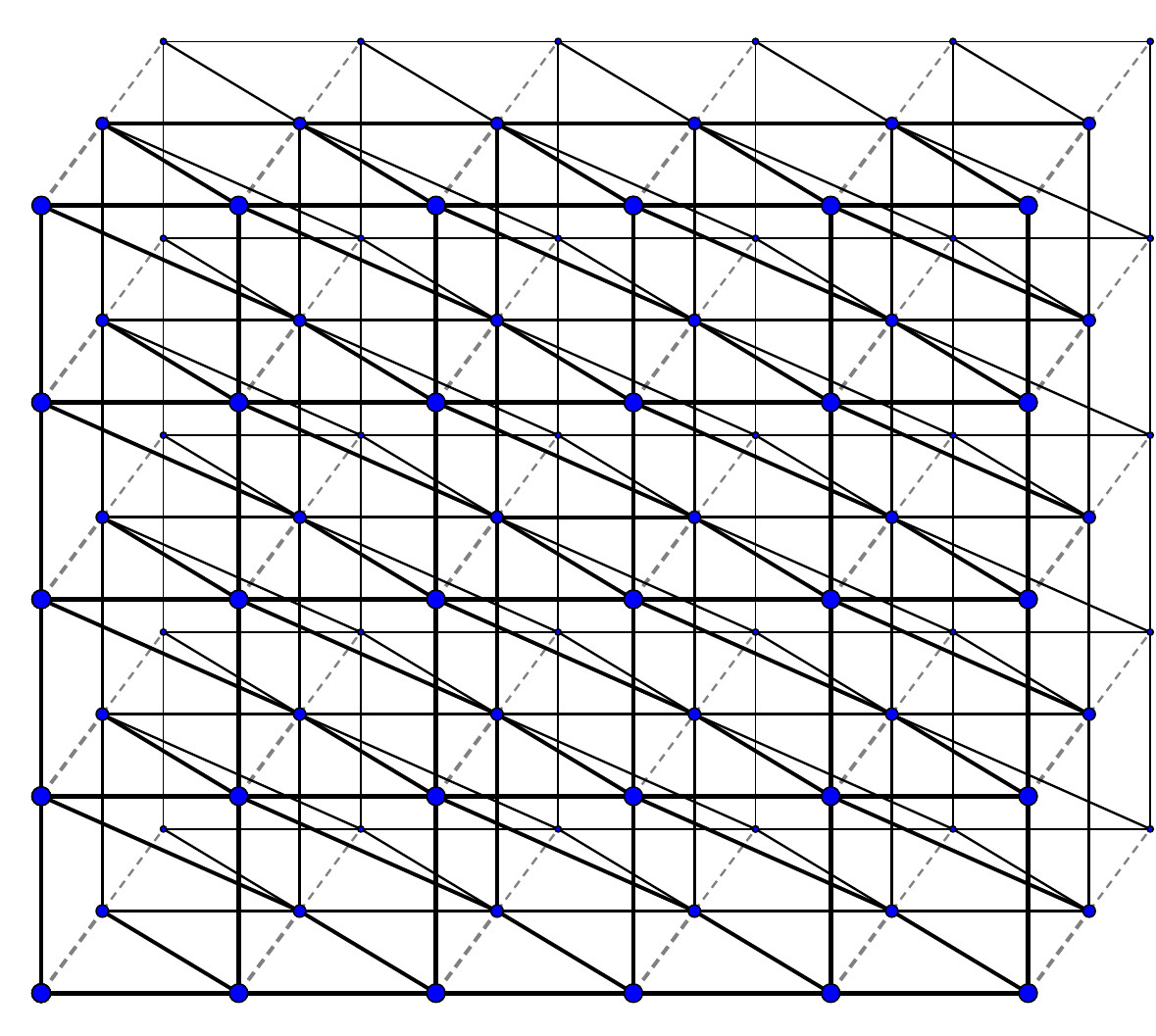}
		\end{subfigure}
		~ %add desired spacing between images, e. g. ~, \quad, \qquad, \hfill etc. 
		%(or a blank line to force the subfigure onto a new line)
		\begin{subfigure}[b]{0.3\textwidth}
			\includegraphics[width=\textwidth]{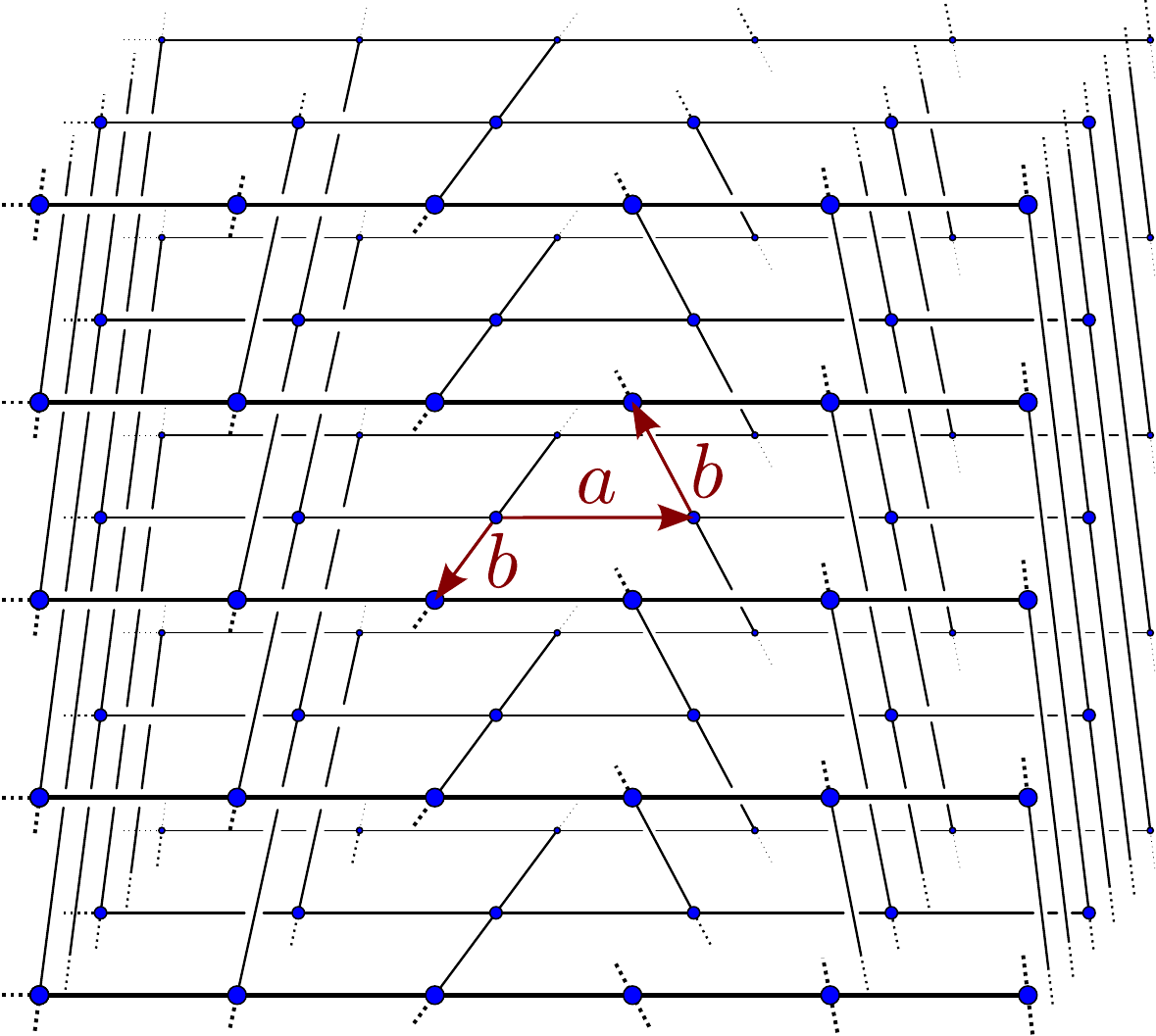}
		\end{subfigure}
		\caption{From left to right: the lattice $\mathbb{Z}^3$, some Cayley graph of $\mathbb{Z}^3$, and a Cayley graph of the Heisenberg group.}\label{fig:111}
	\end{figure}
	
	\vspace{.4cm}
	
	\pagebreak

	\section{Introduction}
	\label{sec:introduction}

	\subsection{General context}
	
	After its introduction in the sixties by Broadbent and Hammersley \cite{MR91567}, percolation was mainly studied on the hypercubic lattice $\mathbb Z^d$, where most of the main features of the model have been rigorously described. In 1996, with their paper ``Percolation beyond $\mathbb Z^d$, many questions and few answers'' \cite{MR1423907}, Benjamini and Schramm initiated the systematic study of percolation on general transitive graphs, leading to a new and fascinating research area. In this generality, new questions  emerged, new techniques were used to establish deep relations between the geometric properties of a graph and the behaviour of percolation processes on this graph. Interesting in their own right, these percolation results also shed new light on the theory on $\mathbb Z^d$. The present paper is exactly in this spirit:  motivated by questions emerging in the general study of percolation on transitive graphs (such as Schramm's Locality Conjecture \cite{benjamini2011critical}), we prove a supercritical sharpness result on transitive graphs with polynomial growth. An interested reader will also find below a new proof of the Grimmett--Marstrand Theorem, which is a central result in the study of supercritical percolation on $\mathbb Z^d$. The proof is robust, and we expect it to have applications to the study of more general percolation processes on $\mathbb Z^d$, such as FK-percolation or level sets of Gaussian processes.

	\paragraph{Geometric framework: transitive graphs of polynomial growth} Let  $G=(V,E)$ be a  vertex-transitive graph  with a fixed origin $o\in V$ (for every $x,y\in V$, there exists a graph automorphism mapping $x$ to $y$). Throughout the paper, all the graphs are assumed to be locally finite and  connected and we will always make these hypotheses without further mention. Write $B_n$ for  the ball of radius $n$ centred at $o$. We say that $G$ has polynomial growth if there exists a polynomial $P$ such that $|B_n|\le P(n)$ for every $n\ge 1$. A celebrated theorem of Gromov and Trofimov \cite{gromov81, trofimov85polynomial}  states that such a graph is always quasi-isometric to a Cayley graph of a finitely generated nilpotent group (see also Theorem VII.56 of \cite{delaharpe}, which is there attributed to ``Diximier, Wolf, Guivarc'h, Bass, and others''). This deep structure result has a long and ongoing history: see \cite{bass72, guivarch73,  kleiner2010, shalomtao2010, tesseratointon}. An important consequence is that there exists an integer $d$, called the growth exponent of $G$, such that    
	\begin{equation}
		\label{eq:4}
		\exists c>0\quad\forall n\ge 1 \qquad  c n^d \le |B_n| \le \tfrac1 c n^d.
	\end{equation}
	Important examples of graphs of polynomial growth include the hypercubic lattice $\mathbb Z^d$, more general  Cayley graphs of $\mathbb Z^d$, and the Heisenberg group\footnote{The discrete Heisenberg group
		$
		\left\{\left(\begin{smallmatrix}
			1 & x & z\\ 0 & 1 & y \\ 0 & 0 & 1
		\end{smallmatrix}\right) : x,y,z \in \mathbb{Z}\right\}
		$
		is generated by  
		$a=\left(\begin{smallmatrix}
			1 & 1 & 0\\ 0 & 1 & 0 \\ 0 & 0 & 1
		\end{smallmatrix}\right)$ and
		$b=\left(\begin{smallmatrix}
			1 & 0 & 0\\ 0 & 1 & 1 \\ 0 & 0 & 1		\end{smallmatrix}\right).$ }: see Figure~\ref{fig:111}.
	
	\paragraph{Percolation on transitive graphs} Let $G=(V,E)$ be a transitive graph. Let $\mathrm P_p$ be the Bernoulli bond percolation measure $\mathrm P_p$ on $\{0,1\}^E$, under which $\omega=(\omega(e))_{e\in E}$ is a family of i.i.d.~Bernoulli random variables of parameter $p$ (we refer to \cite{MR1707339} and \cite{MR3616205} for general introductions to percolation).  We identify $\omega$ with the  subgraph of $G$ obtained by keeping the edges such that $\omega(e)=1$ (called open edges)  and deleting the edges such that $\omega(e)=0$ (the closed edges). The connected components of $\omega$ are called clusters. Percolation on $G$ undergoes a phase transition at a critical parameter: there is a parameter $p_c\in (0,1]$ such that for all $p<p_c$, there is $\mathrm P_p$-almost surely no infinite cluster and for all $p>p_c$, there is $\mathrm P_p$-almost surely at least one infinite cluster.
	It was recently proved in \cite{pcwithgff} that this phase transition is non-trivial if and only if the graph $G$ has superlinear growth: for such graphs, we  have $0<p_c(G)<1$.

	\paragraph{Subcritical sharpness} It was proved in \cite{aizenman1987sharpness} and \cite{menshikov1986coincidence} (see also \cite{duminil2016new})  that the phase transition is sharp, in the following sense:   For every $p<p_c$, there exists a constant $c=c(p)>0$ such that
	\begin{equation}
		\label{eq:54}
		\forall n\ge1\quad \mathrm P_p[o\lr{} \partial B_n]\le e^{-cn},
	\end{equation}
	where $o\lr{} \partial B_n$  is the event that there exists an open path from a fixed origin $o$ to distance $n$ around it. The original papers \cite{aizenman1987sharpness} and \cite{menshikov1986coincidence} prove this result on the hypercubic lattice $\mathbb Z^d$, but it also holds for general transitive graphs \cite{antunovic2008sharpness}. This result is central in the theory: it leads to the important notion  of correlation length, and is the starting point of several finer analyses  of the subcritical regime (see e.g. \cite{MR1707339} and references therein).
	
	\paragraph{Supercritical sharpness} There should be a supercritical counterpart of sharpness. In the supercritical regime $p>p_c$, the natural quantities to consider are the truncated connection probabilities, which encode the  connectivity properties of the random graph obtained from $\omega$ by removing the infinite cluster(s). It is expected (see e.g \cite[Conjecture 5.3]{hermon2021supercritical}) that for every $p>p_c$, there is a constant $c=c(p)>0$ such that 
	\begin{equation}
		\label{eq:64}
		\forall n\ge 1\quad \mathrm P_p[o\lr{} \partial B_n, o\nlr{}\infty]\le e^{-cn}, 
	\end{equation}
	where $o\nlr{}\infty$ denotes the  event that the origin does not belong to an infinite cluster. Currently, the decay above is known for the hypercubic lattice \cite{chayes1987bernoulli,grimmett1990supercritical} and non-amenable graphs~\cite{hermon2021supercritical}.
	
	In general, the study in the supercritical regime is  much more delicate than the subcritical regime. A first reason is geometrical: a key idea to study subcritical clusters is a domination by  subcritical branching processes, whose asymptotic features do not rely on the precise geometry of the underlying $G$. This explains the robustness of the subcritical argument. In contrast, the study of the supercritical clusters requires to understand the infinite cluster(s), whose geometry may be very related to the underlying graph, and so far there is no robust approach to the supercritical regime. A second reason is more technical. The truncated connection events (i.e. of the form $\{A\lr{}B,\ A\nlr{}\infty\}$ where $A\nlr{}\infty$ is the event that $A$ is not connected to infinity in $\omega$) are neither increasing nor decreasing events, which reduces considerably the size of the available ``toolbox'' for the study of such events.  
	
	In this paper, we obtain a clear description of the supercritical phase for transitive graphs of polynomial growth, as presented in the next section. In particular, we prove that supercritical sharpness holds for these graphs.

	\subsection{Main results}\label{sec:main-results}

	Let $G$ be a transitive graph of polynomial growth, with growth exponent $d\ge 2$. Equivalently, $G$ is taken to be a transitive graph of polynomial growth satisfying $p_c(G)<1$  --- see \cite[Corollary 7.19]{MR3616205}. In this paper, we develop new methods that give a precise description of the supercritical phase of Bernoulli percolation on such graphs $(p>p_c)$. More precisely, we build finite-size events that ensure the local existence and uniqueness of large clusters: see Proposition~\ref{prop:1}. This enables us to use powerful renormalisation methods, which extend several perturbative arguments (valid only for $p$ close to 1) to the whole regime $p>p_c(G)$. Our main two results are about the geometry of the finite clusters in the supercritical regime, but our methods would also imply several other results regarding the geometry of the infinite cluster\footnote{A graph of polynomial growth is necessarily  amenable: otherwise, the growth would be exponential. The Burton--Keane Theorem \cite{MR990777} ensures that, for amenable transitive graphs, for any $p$, there is at most one infinite cluster almost surely. In particular, for $p>p_c$, there is a unique infinite cluster.}. Our first result states that the radius of a finite cluster has an exponential tail in the supercritical regime.
	
	\begin{theorem}\label{thm:1}
		Let $G$ be a transitive graph of polynomial growth with $d\ge2$ and $p>p_c(G)$. There exists a constant $c>0$ such that
		\begin{equation}
			\forall n\ge 1\quad \mathrm P_p[o\lr{} \partial B_n,\, o \nlr{} \infty]\le e^{-cn}.
		\end{equation}
	\end{theorem}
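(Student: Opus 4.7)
The plan is to reduce the theorem to a finite-size criterion guaranteeing local existence and uniqueness of a macroscopic cluster (this is the content of Proposition~\ref{prop:1}) and then to feed that criterion into a renormalisation scheme. Concretely, I would first establish that for every $p > p_c(G)$ and every $\varepsilon > 0$ there exists a scale $n_0$ at which, with probability at least $1-\varepsilon$, (i) the configuration restricted to a ball around $o$ contains a cluster of diameter of order $n_0$ and (ii) any two such clusters are connected to each other inside a slightly larger concentric ball. The existence part (i) is the easier half: by definition of $p_c$ the probability $\mathrm P_p[o \lr{} \partial B_{n_0}]$ stays bounded away from $0$ for $p > p_c$, and by FKG together with a covering argument using the polynomial growth bound~\eqref{eq:4}, one promotes this to the existence of a macroscopic cluster with probability $1-\varepsilon$.

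The uniqueness part (ii) is the heart of the proof. The event that there exist two disjoint macroscopic clusters not yet merged in the larger ball is neither monotone nor FKG, so classical tools are insufficient. I would combine the sprinkling method of Benjamini--Tassion with the modern sharp-threshold machinery based on randomised algorithms (OSSS-type inequalities in the spirit of Duminil-Copin--Raoufi--Tassion). At parameter $p$ macroscopic clusters exist with uniformly positive probability; a small sprinkle $p \to p+\delta$ introduces enough candidate edges that a differential inequality between $\partial_p$ of an appropriately chosen probability and a revealment quantity forces the following dichotomy: either the number of disjoint macroscopic clusters in a box drops under the sprinkle, or the ``two clusters remain disjoint'' probability is already tiny. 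Iterating this dichotomy a number of times bounded through~\eqref{eq:4} yields uniqueness at parameter $p+\delta$; since $\delta > 0$ and $p > p_c$ are arbitrary, the criterion is established everywhere in the supercritical phase.

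Granted Proposition~\ref{prop:1}, a renormalisation argument concludes. Tile $G$ by translates of the ball of radius $n_0$ using vertex-transitivity, and declare a translate \emph{good} if the finite-size event holds on a slight dilation of it. Goodness has bounded dependency range and its probability can be made as close to $1$ as desired by choosing $n_0$ large. By construction, in every connected collection of good translates the macroscopic open clusters are all merged into a single large cluster, which therefore belongs to the (a.s.~unique, by Burton--Keane) infinite cluster. Thus on the event $\{o \lr{} \partial B_n,\, o \nlr{} \infty\}$, the open cluster of $o$ must be surrounded by a ``barrier'' of bad translates; since $d \ge 2$, such a barrier at distance of order $n$ from $o$ has renormalised size $\Omega(n/n_0)$, and a Peierls count balancing the small per-translate bad probability against the exponentially many barrier shapes produces the claimed $e^{-cn}$ decay, provided the bad probability is small enough (which is arranged by choosing $n_0$ large).

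The principal obstacle is uniqueness. On $\mathbb Z^d$, Grimmett--Marstrand obtained the analogous statement through slab constructions relying heavily on the hypercubic geometry, with no counterpart on graphs such as the Heisenberg group, where there is no distinguished coordinate direction along which to iterate. The whole point of the sharp-threshold plus sprinkling combination is to bypass the slab construction by an intrinsically probabilistic argument that depends on $G$ only through the polynomial growth bound~\eqref{eq:4}.
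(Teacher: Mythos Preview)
Your high-level architecture is exactly the paper's: reduce Theorem~\ref{thm:1} to the finite-size existence-and-uniqueness criterion of Proposition~\ref{prop:1}, then run a Peierls argument. The final step is essentially right in spirit, with one correction: on a general transitive graph balls do not tile, and there is no rescaled copy of $G$ to renormalise onto. The paper handles this by attaching a box to \emph{every} vertex and defining a $2k$-independent site percolation $X$ on $G$ itself (Lemma~\ref{lem:19}); the Peierls count then uses coarse connectedness of minimal cutsets (Lemmas~\ref{lem:1}--\ref{lem:3}) rather than a tiling.

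The substantive gap is in your treatment of uniqueness, which you correctly flag as the heart of the matter but then describe only schematically. You invoke OSSS/randomised-algorithm inequalities and a vague dichotomy (``either the number of disjoint clusters drops under sprinkling, or the two-clusters-disjoint probability is tiny''), but no mechanism is given for why either alternative should hold, and OSSS is not what the paper uses. The actual argument is considerably more elaborate and routes through a new object, the \emph{corridor function} $\kappa_p(m,n)$, whose renormalisation properties (Section~\ref{sec:renorm-corr-funct}) reduce Proposition~\ref{prop:1} to a lower bound on $\kappa_p$ in short corridors. That bound in turn is obtained by combining: (i) a polynomial upper bound on the two-arm probability $\mathrm P_p[\piv{1}{n}]$ via the Aizenman--Kesten--Newman martingale argument (Section~\ref{sec:quantitative-uniqueness}); (ii) a Talagrand/BKKKL influence inequality turned into a sharp-threshold statement for general connection events (Section~\ref{sec:sharp-thresh-seed}); (iii) a bootstrap giving an a priori uniqueness zone of size $\exp((\log n)^\chi)$ (Section~\ref{sec:bounds-uniq-zone}); (iv) a genuinely new Hamming-distance differential inequality (Section~\ref{sec:sharp-thresh-hamming}) that converts ``$\kappa_p$ small'' into ``many disjoint crossings, hence crossing with very high probability''; and (v) the Benjamini--Tassion sprinkling to merge clusters once they fill space well (Section~\ref{sec:uniq-via-sprinkl}). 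Your proposal names sprinkling and sharp thresholds but omits the corridor function, the AKN input, and---crucially---the Hamming-distance step, which is precisely what makes the dichotomy you gesture at actually go through.
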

	The second result is the stretched-exponential decay in volume of the finite clusters.
	\begin{theorem}\label{thm:2}
		Let $G$ be a transitive graph of polynomial growth with $d\ge2$ and $p>p_c(G)$. Let  $C_o$ denote the cluster of $o$. There exists a constant $c>0$ such that
		\begin{equation}
			\forall n\ge 1\quad \mathrm P_p[n\leq|C_o|<\infty]\le \exp\left(-cn^{\frac{d-1}{d}}\right).
		\end{equation}
	\end{theorem}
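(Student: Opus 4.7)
The plan is to combine Theorem~\ref{thm:1} with the finite-size criterion of Proposition~\ref{prop:1} through a block-renormalisation argument; the geometric input is the isoperimetric inequality for graphs of polynomial growth. The stretch-exponent $(d-1)/d$ is precisely the isoperimetric exponent of $G$: by Coulhon--Saloff-Coste, \eqref{eq:4} provides a constant $c_{\text{iso}}>0$ with $|\partial_V S|\ge c_{\text{iso}}|S|^{(d-1)/d}$ for every finite $S\subset V$, and a discrete analogue persists after coarse-graining at any scale $L$.

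Fix a large scale $L$ and tile $G$ by balls $B_L(x)$ centred on an $L$-net. Using Proposition~\ref{prop:1}, declare $B_L(x)$ to be \emph{good} if the local existence-and-uniqueness event holds on $B_{3L}(x)$: inside $B_{2L}(x)$ there is a unique ``giant'' cluster, this giant contains every sub-cluster of $B_L(x)$ of diameter $\ge L/10$, and for each coarse-neighbour $B_L(y)$ of $B_L(x)$ the two giants are connected inside $B_{3L}(x)\cup B_{3L}(y)$. Proposition~\ref{prop:1} guarantees $\mathrm P_p[B_L(x)\text{ bad}]\le\varepsilon(L)$ with $\varepsilon(L)\to 0$ as $L\to\infty$, and the good event depends only on a bounded coarse-neighbourhood, so bad events on sufficiently separated boxes decouple.

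Now suppose $n\le|C_o|<\infty$ and let $\mathcal S$ be the set of boxes meeting $C_o$, so $|\mathcal S|\ge c n/L^d$. A good box $B_L(x)\in\mathcal S$ in which $C_o$ has local diameter $\ge L/10$ must have \emph{every} coarse-neighbour in $\mathcal S$: otherwise the chaining of good boxes would connect $C_o$ to the (a.s.~unique) infinite cluster, contradicting $o\nlr{}\infty$. Boxes in $\mathcal S$ where $C_o$ has diameter $<L/10$ (``thin'' boxes) can be absorbed because Theorem~\ref{thm:1} controls the probability that $C_o$ produces such a thin excursion, yielding a cost $e^{-cL}$ per thin box which is dwarfed by $\varepsilon(L)$. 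Hence, after this clean-up, the coarse outer boundary of $\mathcal S$ consists entirely of bad boxes, and coarse isoperimetry forces it to contain at least $c\,(n/L^d)^{(d-1)/d}$ of them.

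A standard Peierls sum concludes the argument: the number of coarse animals of volume $m$ and boundary $k$ containing the origin block is at most $C_0^{m+k}$, while the probability that a prescribed family of $k$ well-separated boxes is entirely bad is at most $\varepsilon(L)^{k/C_1}$ by finite-range decoupling. Choosing $L$ so large that $\varepsilon(L)^{1/C_1}C_0\ll 1$ converts this into
\[
\mathrm P_p[n\le|C_o|<\infty]\le\exp\bigl(-c'\,n^{(d-1)/d}\bigr).
\]
The main obstacle is Proposition~\ref{prop:1}: producing, for every $p>p_c$ and every large $L$, a finite-size event that simultaneously yields local existence, uniqueness and chaining of a macroscopic cluster with probability tending to one. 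Once that is granted, the coarse isoperimetric surgery and Peierls enumeration are classical, but care must be taken with the thin blocks and with ensuring that the bad-box events used in the Peierls bound are genuinely independent.
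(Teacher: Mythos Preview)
Your overall strategy matches the paper's: invoke Proposition~\ref{prop:1} to build a block process with high marginals, observe that on $\{n\le |C_o|<\infty\}$ there must be a closed cutset of blocks surrounding $C_o$, apply isoperimetry (the paper's Lemma~\ref{lem:4}) to force this cutset to have at least $cn^{(d-1)/d}$ blocks, and finish with a Peierls enumeration (the paper's Lemma~\ref{lem:19}). Two of your steps, however, do not work as written.

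\medskip

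\textbf{Thin boxes and the appeal to Theorem~\ref{thm:1}.} Theorem~\ref{thm:1} bounds $\mathrm P_p[o\lr{}\partial B_n,\ o\nlr{}\infty]$; it says nothing about the \emph{local} diameter of $C_o$ inside a box far from $o$, so it cannot ``absorb'' thin boxes at cost $e^{-cL}$ each. The fix is simpler and requires no probabilistic input: once $|C_o|>|B_{3L}|$, any box $B_L(x)$ that meets $C_o$ is crossed by $C_o$ from $B_L(x)$ to $\partial B_{3L}(x)$, because $C_o$ is connected and cannot fit inside $B_{3L}(x)$. Hence there are no thin boxes at all. This is exactly how the paper proceeds (``since $|C_o|\ge n>|B_k|$, for every $v\in C_o$ we have $v\lr{}\partial B_k(v)$ in $C_o$''), and it is also why the paper defines the block variable $X(v)$ at \emph{every} vertex $v\in V$ rather than on an $L$-net: working on the original graph makes this crossing automatic and avoids the net-geometry bookkeeping altogether.

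\medskip

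\textbf{The Peierls count.} Enumerating ``coarse animals of volume $m$ and boundary $k$'' by $C_0^{m+k}$ and multiplying by $\varepsilon(L)^{k/C_1}$ does not close: isoperimetry only gives $k\gtrsim m^{(d-1)/d}$, so for $d\ge 2$ the factor $C_0^{m}$ swamps $\varepsilon^{k}$ no matter how small $\varepsilon$ is. You must enumerate the \emph{cutsets} directly, not the animals. The point is that minimal cutsets between $o$ and $\infty$ are $R$-connected (Lemma~\ref{lem:1}), so the number of them of size $k$ is at most $|B_{Rk}|\cdot C^{k}$; combined with $k\ge c\,n^{(d-1)/d}$ and $\varepsilon(L)$ small, this gives the desired $\exp(-c'n^{(d-1)/d})$. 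This is precisely Lemma~\ref{lem:19} in the paper.
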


	\subsection{Comments}
	\label{sec:comments}

	\paragraph{Previous work on hypercubic lattices}  Both Theorems~\ref{thm:1} and \ref{thm:2} were known for the hypercubic lattice $\mathbb Z^d$.  In dimension $d=2$, they were proved by Kesten~\cite{MR575895}. In dimension $d\ge 3$, they were proved by~\cite{chayes1987bernoulli} and~\cite{kesten1990probability}, by making use of the difficult slab result of Grimmett and Marstrand \cite{grimmett1990supercritical}: for any $p>p_c(\mathbb{Z}^d)$, the percolation on $\mathbb{Z}^d$ restricted to a sufficiently thick slab (i.e. a graph of the form $\mathbb Z^2\times\{0,\dots,k\}^{d-2}$ for $k$ large)  contains an infinite cluster. These previous methods do not extend  to general graphs of polynomial growth for two main reasons. First, the proof of Grimmett and Marstrand relies strongly on the symmetries of $\mathbb Z^d$ (in particular reflections and rotations). Such symmetries are not available for general graphs: one can think of Cayley graphs of $\mathbb Z^d$ with respect to non-symmetric generating sets (see Figure~\ref{fig:111}). As for $g\mapsto g^{-1}$, it is generally not a graph automorphism for Cayley graphs of non-Abelian groups. A second obstacle is the lack of ``slab''-structure for general graphs of polynomial growth: the hypercubic $\mathbb Z^d$ can naturally be partitioned into slabs. In contrast, the Cayley graph of the discrete Heisenberg group illustrated on Figure~\ref{fig:111} has no natural notion of slab\footnote{If we try to mimic the definition of slabs in $\mathbb Z^d$, we want to take a thickened version of the subgroup generated by $a$ and $b$. The problem is that, in the case of the Heisenberg group, this subgroup is equal to the whole group.}. In our present proof, we replace the slab result by estimates on the two-point function inside ``corridor''  subgraphs introduced in Section~\ref{sec:definition-notation}. Contrary to the lattice case, our proof does not distinguish between $d=2$ and $d\ge3$. In this sense, this unifies the two approaches for hypercubic lattices.
	
	\paragraph{Dynamic versus static renormalisation} In the case of the hypercubic lattice, the Grimmett--Marstrand Theorem is proved by using a dynamic renormalisation argument (in the sense of \cite[Chapter 7]{MR1707339}). In the present paper, we construct a local existence-and-uniqueness event, which allows us to directly perform a static renormalisation.    	
	
	\paragraph{A new quantitative proof of Grimmett--Marstrand} The interested reader may extract from the present paper a new proof of the Grimmett--Marstrand Theorem on $\mathbb{Z}^d$. When applying the methods of the present paper to hypercubic lattices, some simplifications occur at several places, using symmetries. For example, the whole Section~\ref{sec:bounds-uniq-zone} may be replaced by a reference to the stronger  result of Cerf \cite{cerf2015}, and Section~\ref{sec:properties-seeds} may be drastically simplified. The proof is quantitative and would give an estimate of the same order as the one obtained in \cite{duminil2019upper}.
	
	\paragraph{Lower bounds} The bounds in Theorems~\ref{thm:1} and \ref{thm:2} are sharp at the exponential scale: This easier result is classical for the hypercubic lattice $\mathbb{Z}^d$ and the same techniques can be used to prove that, for every $p\in(p_c,1)$,  there exists a constant $c'>0$ such that
	\begin{equation}
		\forall n\ge 1\quad \mathrm P_p[o\lr{} \partial B_n,\, o \nlr{} \infty]\ge e^{-c'n}.
	\end{equation}
	and
	\begin{equation}
		\forall n\ge 1\quad \mathrm P_p[n\leq|C_o|<\infty]\ge \exp\left(-c'n^{\frac{d-1}{d}}\right).
	\end{equation}
	Remark~\ref{rem:coarse-conn-spheres}, Lemma~\ref{lem:5} and Lemma~\ref{lem:10} guarantee that the geometry of $G$ is --- as far as this argument is concerned --- as nice as that of $\mathbb{Z}^d$.	
	
	\paragraph{Locality} In the context of graphs of polynomial growth (which are amenable), supercritical sharpness is related to the existence of some local existence and uniqueness event in a finite box: see Proposition~\ref{prop:1}. This allows us to obtain a finite volume characterisation of $p>p_c(G)$. In \cite{polynomialocality}, we use the present work to prove  Schramm's Locality Conjecture (stated in \cite{benjamini2011critical}) in the particular case of transitive graphs of polynomial growth. This extends \cite{MR3630298}, with different techniques. The case of transitive graphs with (uniformly lower-bounded) exponential growth has been established in \cite{2018arXiv180808940H}.		
	
	\paragraph{Related works} Several related works regarding the sharpness of the supercritical phase have been developed in the last few years. For the hypercubic  lattice $\mathbb{Z}^d$, a quantitative version of the Grimmett--Marstrand Theorem was presented in \cite{duminil2019upper}. In the case of non-amenable graphs, \cite{hermon2021supercritical} proved exponential decay for the finite cluster size distribution. 	For the Ising Model, \cite{duminil2020exponential} obtained exponential decay for the truncated two-point function in the supercritical regime. As for Gaussian fields, we can mention the work \cite{duminil2020equality} on the Gaussian free field in $\mathbb{Z}^d$ for $d\ge 3$ and \cite{severo2021sharp} for general results on continuous Gaussian fields in $\mathbb{R}^d$  ($d\ge2$) with correlations decaying reasonably fast.

	\subsection{Definitions and notation}
	\label{sec:definition-notation}
	\newcommand{\piv}[2]{\mathsf{Piv}(#1,#2)}
	\newcommand{\pivx}[3]{\mathsf{Piv}_{#1}(#2,#3)}
	
	Throughout the paper, $G=(V,E)$ denotes a fixed transitive graph of polynomial growth, whose growth exponent $d$ satisfies $d\ge2$. The graph $G$ is taken to be simple: there are no multiple edges, no self-loops, no orientations on the edges. We also fix some origin $o\in V$.
	
	\paragraph{Graph notation} For $x\in V$, we write $B_n(x)$ for the ball of radius $n$ centred at $x$, and we simply write $B_n=B_n(o)$ for the ball centred at the origin.       The \defini{boundary  $\partial A$} of a set $A\subset V$ is  defined as the set of edges having one endpoint in $A$ and the other in $V\setminus A$. A \defini{path of length $\ell$ in $A$} is a sequence $\gamma=(\gamma_0,\ldots,\gamma_\ell$) of vertices of $A$ such that  $\gamma_i$ and $\gamma_{i+1}$ are neighbours for every $i$.
	
	\paragraph{Percolation definitions} Let $\omega$ be a percolation configuration. A path $\gamma$ is said to be \defini{open} if all its edges $\{\gamma_i,\gamma_{i+1}\}$  are open.  For $A\subset V$, we call \defini{clusters in $A$} the connected components of the graph with vertex set $A$ and edge set the elements of $\omega$ with both endpoints in $A$. 
	For $A,B,C\subset V$,  we say that $A$ and $B$ are \defini{connected in} $C$ if there exists an open path in $C$ from $A$ to $B$. We denote the corresponding event by $A\lr{C} B$ and its complement by $A\nlr{C} B$.  In the case $C=V$, we simply write $A\lr{} B$ and  $A\nlr{} B$.  
	
	\paragraph{Corridor function} Let $m,n\geq 0$ and $p\in[0,1]$. We define the \defini{corridor function of length $m$ and thickness $n$} at parameter $p$ by 
	\begin{equation}
		\label{eq:1}
		\kappa_p(m,n)=\min_{\gamma\: :\: \mathsf{length}(\gamma)\le m}\mathrm P_p[o(\gamma) \lr{B_n(\gamma)}e(\gamma)], 
	\end{equation}
	where $o(\gamma)$ and $e(\gamma)$ denote the first and last vertices of $\gamma$ and $B_n(\gamma)=\bigcup_iB_n(\gamma_i)$. The quantity has two different interpretations, depending on whether  $m\le n$ (``short corridor'') or $m\ge n$ (``long corridor''): In the first case, the set  $B_n(\gamma)$ looks roughly like a ball, and the quantity is similar to the two point-function restricted to a ball. In contrast, when $m\gg n$, the set $B_n(\gamma)$ takes the shape of a long corridor and the parameter $m$ becomes relevant for its geometry.  This quantity will be instrumental in our paper.  First, the corridor function has an important renormalisation property, presented in Section~\ref{sec:renorm-corr-funct}. Second, it is deeply related to the local  uniqueness events,  defined below. Finally, it will be the central object in the proof of the main proposition of this paper, in Section~\ref{central-proposition}: we strengthen some a priori bounds on the short-corridor function into strong estimates in long corridors.

	\paragraph{Pivotal and uniqueness events} For $1\leq m\leq n$, we define the \defini{pivotal event}
	\begin{equation}
		\label{eq:2}
		\piv m n= \bigcup_{x,y\in B_m}\{x\lr{}\partial B_n,\, y\lr{}\partial B_n,\, x\nlr{B_n}y\}.
	\end{equation}    
	In other words, the event $\piv m n$ occurs if there are two disjoint clusters (for the configuration restricted to $B_n$) connecting $B_m$ to $\partial B_n$. Even if the event is not formally defined in terms of the pivotality of a set  for a certain event, we use the notation $\mathsf{Piv}$ because the event will typically occur when the ball $B_m$  is pivotal for large connection events. This is also known in the literature as the two-arm event (see \cite{cerf2015}). Similarly, for $x\in V$, we define $\pivx x m n$ as the event $\piv m n$, but centred at $x$ instead of $o$. For $1\leq m\leq n$, we define the \defini{uniqueness event} by
	\begin{equation}
		\label{eq:38378297}
		U(m,n)=\{\text{there exists at most one cluster in $B_n$ intersecting  $B_m$ and $\partial B_n$}\}.
	\end{equation}
	Notice that  $U(m,n):=\piv m n ^c$. We emphasise that the event $U(m,n)$ does not require the existence of a cluster crossing from $B_m$ to $\partial B_n$ in $B_n$. When the event $U(m,n)$ occurs, there is either one or no such crossing cluster. The event $U(m,n)$ is particularly useful, as it allows us to ``glue'' locally macroscopic clusters.

	\paragraph{Monotone coupling} Let  $\mathbf{P}=\mathcal U([0,1])^{\otimes E}$ be the product of Lebesgue measures on $[0,1]^E$ and consider the canonical maps $\omega_p:[0,1]^E\to\{0,1\}^E$, $(x_e)\mapsto (\mathbf{1}_{x_e\leq p})$. Notice that, under $\mathbf P$,  the random configuration $\omega_p$ has law  $\mathrm P_p$.
	The random configuration $\omega_p$ naturally gives rise to the notions of $p$-open edge, $p$-cluster, $p$-open path and $p$-connectivity.
	For $A,B,C\subset V$ we write $A\lr[p]{C}B$ if  $A$ and $B$ are $p$-connected in $C$, and $A\nlr[p]{C}B$ for the complement event. When we are looking at all the $p$-configurations coupled together, additional interesting events appear. For instance, we can look at how $p$-clusters are connected at a parameter $q\ge p$, and the following generalisation of the uniqueness event will be useful. For $p\le q$ and $n\ge m\ge1$, we define the \defini{sprinkled uniqueness event} by
	\begin{equation}
		\label{eq:3}
		U_{p,q}(m,n)=\left \{\text{
			\begin{minipage}[c]{.5\linewidth} \centering
				All the $p$-clusters in $B_n$ intersecting  $B_m$ and $\partial B_n$ are $q$-connected in $B_n$
		\end{minipage}}
		\right\}.
	\end{equation}
	The event above has some useful monotonicity properties: For fixed $n\ge m\ge 1$, the function $f(p,q)=\mathbf P[U_{p,q}(m,n)]$ is nonincreasing in $p$ and nondecreasing in $q$. In contrast, the probability of the uniqueness event $\mathrm P_p[U(m,n)]$, which is equal to $f(p,p)$, has no clear monotonicity in $p$.

	\subsection{Organisation of the paper}
	
	The main ingredient in the proof of Theorems~\ref{thm:1} and \ref{thm:2} is the following proposition, which provides local existence and uniqueness of certain crossing clusters in large boxes. From there,  we apply a coarse-graining argument, presented in Section \ref{sec:proof-theor-1}. In  $\mathbb Z^d$,  standard coarse-graining arguments use the scaling property of $\mathbb Z^d$: the set $n\mathbb Z^d$ is a rescaled version of $\mathbb Z^d$. For general transitive graphs of polynomial growth, we circumvent scaling by using an $n$-independent percolation with sufficiently high marginals.   
	\begin{proposition}
		\label{prop:1}
		Let $G$ be a transitive graph of polynomial growth with $d\ge 2$ and let $p>p_c$. Then, for all $n$ large enough, we have
		\begin{equation}\label{eq:5}
			\mathrm P_p[B_{n/10}\lr{}\partial B_{n}, U(n/5,n/2)]\geq 1-e^{-\sqrt{n}}.
		\end{equation}
	\end{proposition}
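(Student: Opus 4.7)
The plan is to reduce Proposition~\ref{prop:1} to a single strong long-corridor estimate at parameter $p$: for some absolute constant $C>0$ and all $n$ large,
\begin{equation}
\kappa_p(Cn,n) \;\geq\; 1 - e^{-\sqrt n}.
\end{equation}
Granting such a bound, both halves of~\eqref{eq:5} follow directly. For the connection event, I would apply the corridor estimate to a geodesic $\gamma$ from $o$ to a fixed point of $\partial B_n$ (length $n$, corridor thickness $n$), giving $o \lr{} \partial B_n$ in $B_n(\gamma)$ with probability at least $1 - e^{-\sqrt n}$. For the uniqueness event $U(n/5,n/2)$, note that $\piv{n/5}{n/2}$ entails the existence of $x,y\in B_{n/5}$ with $x\nlr{B_{n/2}} y$ at parameter $p$; applying the corridor estimate to a shortest path from $x$ to $y$ inside $B_{n/5}$ (length at most $2n/5$, corridor thickness $n/5$, fitting inside $B_{n/2}$) and union-bounding over the polynomially many pairs $(x,y)\in B_{n/5}^2$ yields $\mathrm P_p[\piv{n/5}{n/2}] \leq |B_{n/5}|^2 e^{-\sqrt{n/5}} \leq e^{-c\sqrt n}$.

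The heart of the matter is the corridor estimate itself. I would pick an intermediate parameter $p_0 \in (p_c,p)$ and work in the monotone coupling $\mathbf P$, using $\varepsilon := p - p_0 > 0$ as a sprinkling budget. A polynomial-growth graph is amenable, so by the Burton--Keane theorem the infinite $p_0$-cluster is almost surely unique; a standard mass-transport argument yields $\mathrm P_{p_0}[o\lr{}\infty] = \theta(p_0) > 0$. This gives the \emph{a priori} input $\kappa_{p_0}(m,m)\geq c_0 > 0$ uniformly in $m$. The core plan is then to bootstrap this constant short-corridor bound at $p_0$ into the near-certain long-corridor bound at $p$ via the renormalisation of $\kappa$ promised in Section~\ref{sec:renorm-corr-funct}. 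Concretely, I would partition a long corridor into $\sqrt n$ overlapping segments of length $\sqrt n$, produce a giant cluster in each segment (at the short-corridor scale), and glue consecutive giants using the sprinkled uniqueness event $U_{p_0,p}$ in each overlap, spending a geometric share of $\varepsilon$ at each gluing. The $\sqrt n$ independent merging attempts along the corridor produce the failure rate $e^{-\Omega(\sqrt n)}$, which is the origin of the exponent in~\eqref{eq:5}.

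The main obstacle is precisely this bootstrap: upgrading a merely positive constant $\theta(p_0)$ to a $1-e^{-\sqrt n}$ lower bound at $p$, without lattice reflection/permutation symmetries and with no slab structure available in $G$. In Grimmett--Marstrand this step is handled by slab renormalisation and symmetry-based reflection tricks; here both ingredients must be replaced by robust substitutes. The corridor function is precisely the flexible surrogate for slabs, and its renormalisation is driven by the sprinkled uniqueness event $U_{p_0,p}$, whose $(p,q)$-monotonicity compensates for the non-monotonicity of $\piv{\cdot}{\cdot}$. The remaining delicate point is the construction of seeds --- finite configurations that locally mimic fragments of the infinite cluster and can be anchored along an arbitrary geodesic of $G$ --- together with sharp-threshold/sprinkling inputs à la Benjamini--Tassion that allow seeds to be merged along corridors of arbitrary geometry. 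These are exactly the ingredients assembled in the corresponding technical sections of the paper (notably Sections~\ref{sec:bounds-uniq-zone}--\ref{central-proposition}), and once they are in place the corridor estimate and hence Proposition~\ref{prop:1} follow as outlined above.
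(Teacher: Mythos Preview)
Your reduction of Proposition~\ref{prop:1} to a strong corridor estimate $\kappa_p(Cn,n)\ge 1-e^{-\sqrt n}$ is sound in spirit and close to how the paper uses Proposition~\ref{prop:2}. But the outline contains a genuine gap at the ``a priori input'' step. Burton--Keane uniqueness together with FKG only gives $\mathrm P_{p_0}[x\lr{}y]\ge\theta(p_0)^2$ for connection in the \emph{whole graph}; it says nothing about $\mathrm P_{p_0}[x\lr{B_m(\gamma)}y]$ when the connection is restricted to a corridor of thickness comparable to $d(x,y)$. The assertion $\kappa_{p_0}(m,m)\ge c_0>0$ uniformly in $m$ is precisely the thing that can fail a priori --- indeed, the whole proof in Section~\ref{central-proposition} is organised around the possibility that it does. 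Your bootstrap scheme (produce a giant in each short segment, then sprinkle to glue) presupposes this lower bound as its starting point, so as written the argument is circular.

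The paper avoids this circularity via a \emph{dichotomy} that your sketch does not contain. Fixing $p_0<p$ and a small $\delta$, either $\kappa_{p_0}(n/\log^4 n,n)\ge\delta$ along a subsequence --- the easy case, which feeds directly into the renormalisation of Section~\ref{sec:renorm-corr-funct} --- or $\kappa_{p_0}(n/\log^4 n,n)<\delta$ for all large $n$. In the second case the Hamming-distance argument (Proposition~\ref{prop:5}) extracts \emph{positive} information from this negative hypothesis: a small restricted two-point function forces the existence of many points in $B_m$ that are pairwise disconnected in $B_n$ yet each connected to $\partial B_n$ with probability at least $\theta(p_0)$; this produces many disjoint crossings and, via the Hamming derivative inequality, $\mathrm P_{p_0}[B_m\lr{}\partial B_n]\ge 1-e^{-\log^3 n}$. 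Only then does uniqueness via sprinkling (Proposition~\ref{prop:7}) apply, and one closes the loop by deducing $\limsup_n\kappa_{p_0+2\delta}(n/\log^4 n,n)>0$. This step --- turning ``$\kappa$ is small'' into ``there are many disjoint arms, hence balls connect far with high probability'' --- is the genuinely new idea of the paper, and it is exactly the missing ingredient in your plan.
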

	
	The main goal of the paper is the proof of the proposition above. The proof itself is presented in Section~\ref{central-proposition}. It  relies on several independent arguments and intermediate results established in Sections~\ref{sec:geom-lemmas}--\ref{sec:uniq-via-sprinkl}.
	Section~\ref{sec:geom-lemmas} is a geometric toolbox. In each of the Sections~\ref{sec:renorm-corr-funct}--\ref{sec:uniq-via-sprinkl}, we isolate one important ingredient (for the most important one, see Section~\ref{sec:sharp-thresh-hamming}). Each section may use the results from previous ones, but can be read roughly independently.
	Once the central Proposition~\ref{prop:1} is established, the two theorems can be proved by using some adaptation of standard renormalisation arguments, which are presented in Section~\ref{sec:proof-theor-1}. Here is a more detailed roadmap of the forthcoming sections:
	
	\paragraph{\it Section~\ref{sec:geom-lemmas}: Geometric lemmas} This section provides several definitions and lemmas on the geometry of graphs of polynomial growth, in particular related to cutsets, annuli, spheres, and finding many infinite paths that exit $B_n$ and stay away from each other. The reader may want to read this section only when other sections require it.
	
	\paragraph{\it Section~\ref{sec:renorm-corr-funct}: Renormalisation of the corridor function} In this section, we reduce the proof of Proposition~\ref{prop:1} to showing that, at infinitely many scales, long corridors can be crossed with probability larger than some constant. This involves a renormalisation property of the corridor function.
	
	\paragraph{\it Section~\ref{sec:quantitative-uniqueness}: Probability that two clusters meet at one point} We prove that  the probability  $\mathrm P_p[\piv 1 n]$ that two clusters of radius $n$  meet at one point decays polynomially fast in $n$, uniformly in $p$. We use an adapted version of the beautiful exploration  argument of Aizenman, Kesten, and Newman \cite{MR901151} (see also \cite{gandolfi1988uniqueness,cerf2015,2018arXiv180808940H}), which relates such meeting points to the deviation of the sum of i.i.d.\@ Bernoulli($p$) random variables. The argument extends to amenable transitive graphs.     
	
	\paragraph{\it Section~\ref{sec:sharp-thresh-seed}: Sharp threshold results via hypercontractivity} We establish a general sharp threshold result for connectivity probabilities of the form $\mathrm P_p[A\lr{C} B]$, $A,B,C\subset V$. To do so, we rely on the general inequalities on influences for Boolean functions of Talagrand \cite{talagrand1994russo} and \cite{KKL}.  In the spirit of recent applications of sharp threshold results to percolation \cite{MR3765895,duminil2019upper}, our argument does not involve approximations by symmetric events on the torus. Instead, we only use the  bounds on the influences in the ``bulk'' (at a sufficient distance from the set $A$ and $B$) provided by Section~\ref{sec:quantitative-uniqueness}. 	
	
	\paragraph{\it Section~\ref{sec:bounds-uniq-zone}: A priori bound  on the uniqueness zone}  We consider the following problem: for $n$ large,  for which value of $s(n)$ can we ensure that at most one  cluster in $B_n$ crosses from $B_{s(n)}$ to $\partial B_n$? Ultimately, Proposition~\ref{prop:1}  shows that we can choose  $s(n)\ge n/3$. In order to prove this, we need an  a priori lower  bound as an intermediate step: in Section~\ref{sec:bounds-uniq-zone}, we prove that  for $p>p_c$, we can choose $s(n)$ larger than any power of $\log n$.   To achieve this, we have to overcome difficulties that are not present for the lattice  $\mathbb Z^d$. In the  lattice case, a result of Cerf \cite{cerf2015} shows directly that we can choose $s(n)\ge n^c$ for a positive constant $c>0$ independent of $p\in[0,1]$. Due to the lack of symmetry, this argument does not extend to general graphs of polynomial growth. Here we combine some arguments of \cite{cerf2015} together with a new renormalisation method.

	\sloppy\paragraph{\it Section~\ref{sec:sharp-thresh-hamming}: Sharp threshold results  via Hamming distance} This section presents the \emph{main new argument}, which relies on a general differential inequality involving the Hamming distance on the hypercube (if you hesitate about what to read after this introduction, then the beginning of Section~\ref{sec:sharp-thresh-hamming} is a good choice). We prove that, in the supercritical regime, if the corridor function is small, then large annuli are crossed with high probability.
	This will be helpful in Section~\ref{central-proposition}: in order to establish Proposition~\ref{prop:1}, we will need to prove that the corridor function is large. The results of Section~\ref{sec:sharp-thresh-hamming} then say that, without loss of generality, we may assume that annuli are crossed with high probability.
	
	\paragraph{\it Section~\ref{sec:uniq-via-sprinkl}: Uniqueness via sprinkling} We consider the configuration in $B_n$ and assume that the large clusters fill sufficiently well the ball $B_n$, in the sense that  all the balls $B_k(x)$, $x\in B_n$, are $p$-connected to $\partial B_n$, for some $k\ll n$. In this case, we prove that for a small sprinkling $\delta>0$, the sprinkled uniqueness event $U_{p,p+\delta}(n/4,n)$ occurs with high probability: all the $p$-clusters crossing the annulus $B_n\setminus B_{n/4}$ get connected in $B_n$ at $p+\delta$. We use a non-trivial refinement and generalisation of \cite{benjamini2017homogenization}.     
	\paragraph{\it Section~\ref{central-proposition}: Proof of Proposition~\ref{prop:1}} This section gives the proof of Proposition~\ref{prop:1}. It uses all the results from Sections~\ref{sec:renorm-corr-funct}--\ref{sec:uniq-via-sprinkl}.
	
	\paragraph{\it Section~\ref{sec:proof-theor-1}: Proof of sharpness: coarse grains without rescaling} We use Proposition~\ref{prop:1} to establish Theorems~\ref{thm:1} and \ref{thm:2}. We perform a coarse-graining argument to reduce the study of the supercritical regime to that of a suitable perturbative regime. This is not done by rescaling the graph $G$ but by defining a $k$-independent percolation process on the original graph $G$.

	\subsection{Acknowledgements}
	\label{sec:acknoledgment}
	We thank Romain Tessera for his availability and for sharing his vision of nilpotent geometry. We are grateful to Itai Benjamini, Hugo Duminil-Copin, Aran Raoufi, Alain-Sol Sznitman, and Matthew Tointon for stimulating discussions. We thank the anonymous referees for the quality of their feedback.
	
	The first and third authors are supported by the European Research Council (ERC) under the European Union’s Horizon 2020 research and innovation program (grant agreement No 851565) and by the NCCR SwissMap. The second author acknowledges the support of the ERC Advanced Grant 740943 GeoBrown.

	\section{Geometric lemmas}
	\label{sec:geom-lemmas}
	
	In this section, we collect a certain number of geometric lemmas that will prove useful throughout the paper. This can be skipped in a first reading.
	
	\subsection{Cutsets}
	Percolation is primarily concerned with ``connectivity events'' and existence of open paths. The complement of such an event is well understood in terms of closed  cutsets, which are dual to open paths. In this section, we define and provide useful properties of cutsets.
	
	If $F$ and $\Pi$ denote two subsets of $V(G)$, we say that $\Pi$ is a \defini{cutset} between $F$ and $\infty$ if $\Pi \cap F=\varnothing$ and every infinite self-avoiding path starting in $F$ has to intersect $\Pi$ at some point. If, furthermore, no strict subset of $\Pi$ is a cutset between $F$ and $\infty$, we say that $\Pi$ is a \defini{minimal} cutset between $F$ and $\infty$.
	
	Given a subset $C\subset V(G)$ and a positive number $r$, we say that $C$ is $r$-connected if it is connected for the following graph structure: for any two distinct vertices of $C$, we declare them to be \defini{$r$-adjacent} if their distance in $G$ is at most $r$.
	
	The following well-known lemma provides some geometric control on these \emph{minimal} cutsets.
	
	\begin{lemma}[Coarse connectedness of minimal cutsets]
		\label{lem:1}
		Let $G$ be a transitive graph of polynomial growth with $d\ge 2$. Then, there is some constant $R$ such that the following holds.\\ Let $o$ be a vertex of $G$. Every minimal cutset disconnecting $o$ and $\infty$ is $R$-connected.
	\end{lemma}
	
	\begin{proof}
		By \cite{trofimov85polynomial}, $G$ is quasi-isometric to a Cayley graph $H$ of some finitely generated nilpotent group $\Gamma$. As $\Gamma$ is finitely generated and nilpotent, it is finitely presented (see Propositions~13.75 and 13.84 of \cite{drutu2017}).  Therefore, there is some constant $R_H$ such that every finite cycle of $H$ is a sum modulo 2 of cycles of length at most $R_H$ --- by cycle, we mean a (not necessarily self-avoiding) path ending where it starts. As $G$ is quasi-isometric to $H$, there is some constant $R_G$ such that every finite cycle of $G$ is a sum modulo 2 of cycles of length at most $R_G$. To see this, fix  $\varphi : G\to H$ and $\psi : H \to G$  quasi-isometries that are quasi-inverse of each other.  To every edge $\{u,v\}$ of $G$ (resp. $H$) correspond two vertices $\varphi(u)$ and $\varphi(v)$ (resp. $\psi(u)$ and $\psi(v)$), which are at bounded distance of each other. For each such edge $\{u,v\}$, select once and for all some  geodesic path connecting the two corresponding vertices in the other graph. This enables us to associate with each cycle in one graph a cycle in the other. The properties of this process allow us to derive from the existence of $R_H$ the existence of a constant $R_G$ as above. It is also the case that every infinite cycle (bi-infinite path) can be written as an infinite (locally finite) sum modulo 2 of cycles of length at most $R_G$. Indeed, as $d\ge2$, the graph $G$ is one-ended, meaning that removing any finite set of vertices leaves us with a unique \emph{infinite} connected component (and possibly some finite connected components). In a one-ended graph, any infinite cycle can be seen as a pointwise limit\footnote{This means the following: If a graph is one-ended, then for every bi-infinite self-avoiding path~$\gamma$, there is a sequence of self-avoiding cycles $\gamma_n$ such that for every edge $e$, for $n$ large enough, $e$ belongs to $\gamma$ if and only if it belongs to $\gamma_n$. To build such a sequence, consider the path $\gamma$ until it exits the ball $B_n$ and then use the fact that $G\setminus B_n$  contains a unique infinite connected component to connect the two
			endpoints outside $B_n$.}  of finite cycles, whence the extension of our decomposition to infinite cycles. What we have proved about $R_G$ permits us to use Theorem 5.1 of \cite{timar2007cutsets}  (see also \cite{babson1999cut, timar2013}), which yields the conclusion.
	\end{proof}
	
	The goal of this paper is to reduce the study of the whole supercritical regime to some perturbative regime, where the Peierls' argument applies. Studying this perturbative regime will require some quantitative control in the usual entropy-vs-energy spirit of the Peierls' argument. The next three lemmas will help us get such a control.
	
	\begin{lemma}
		\label{lem:2}
		Let $G$ be a transitive graph of polynomial growth with $d\ge 2$. Let $R$ be so that the conclusion of Lemma~\ref{lem:1} holds. Let $\Pi$ be a cutset disconnecting $o$ and $\infty$. Then, $\Pi$ intersects the ball of centre $o$ and radius $R|\Pi|$.
	\end{lemma}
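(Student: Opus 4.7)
Plan: I would pass to a minimal sub-cutset, apply Lemma~\ref{lem:1} to bound its diameter, and deduce that it cannot lie far from $o$. First, extract a minimal cutset $\Pi'\subseteq\Pi$ between $o$ and $\infty$ by greedily removing redundant vertices. Since $\Pi'\subseteq\Pi$ we have $|\Pi'|\le|\Pi|$ and $d(o,\Pi)\le d(o,\Pi')$, so it suffices to prove $d(o,\Pi')\le R|\Pi'|$.

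Next, I apply Lemma~\ref{lem:1}: $\Pi'$ is $R$-connected. Let $v\in\Pi'$ achieve $d(o,v)=d(o,\Pi')=:d^*$, and set $n=|\Pi'|$. Using $R$-connectedness, list $\Pi'=\{v_1=v,v_2,\dots,v_n\}$ so that each $v_i$ ($i\ge 2$) is within $G$-distance $R$ of some $v_j$ with $j<i$; iterating the triangle inequality gives $d(v,v_i)\le R(n-1)$, so $\Pi'\subseteq B_{R(n-1)}(v)$.

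The final step is a contradiction argument: if $d^*>Rn$, then $d(o,v)>R(n-1)+1$, so $o\notin B_{R(n-1)}(v)$, placing $\Pi'$ inside a single ball of $G$ disjoint from $o$. It remains to observe that such a $\Pi'$ cannot disconnect $o$ from infinity. This follows from one-endedness of $G$ (already established in the proof of Lemma~\ref{lem:1}) together with polynomial growth $d\ge 2$: one builds an infinite self-avoiding path from $o$ that stays outside $B_{R(n-1)}(v)$, contradicting the cutset property of $\Pi'$.

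The main obstacle is this last geometric claim: \emph{a finite subset of a ball disjoint from $o$ cannot disconnect $o$ from infinity}. Intuitively clear in $\mathbb Z^d$, it requires care in general: one can either transfer the property via the quasi-isometry with a nilpotent Cayley graph (as in Lemma~\ref{lem:1}), or bypass Lemma~\ref{lem:1} entirely via the Coulhon--Saloff-Coste isoperimetric inequality for polynomial-growth graphs. In the latter approach, writing $S$ for the finite component of $o$ in $V\setminus\Pi'$ and using minimality to identify $\partial_V S$ with $\Pi'$, one obtains $|S|\ge|B_{d^*-1}(o)|\gtrsim(d^*)^d$ and $|\Pi'|=|\partial_V S|\gtrsim|S|^{(d-1)/d}\gtrsim(d^*)^{d-1}\gtrsim d^*$ when $d\ge 2$, so $d^*\lesssim|\Pi'|$, and the conclusion follows after enlarging $R$ to absorb the resulting constant.
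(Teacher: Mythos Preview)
Your reduction to a minimal sub-cutset $\Pi'$ and the diameter bound $\Pi'\subseteq B_{R(n-1)}(v)$ via $R$-connectedness are both correct, and you are right to flag step~3 as the obstacle. However, the specific claim you make there---that one-endedness plus $d\ge 2$ lets you build an infinite self-avoiding path from $o$ avoiding $B_{R(n-1)}(v)$ whenever $o\notin B_{R(n-1)}(v)$---is not justified. One-endedness only guarantees that $V\setminus B_{R(n-1)}(v)$ has a unique \emph{infinite} component; it does not rule out $o$ lying in a finite ``pocket''. Indeed, the paper later introduces the exposed sphere $S_r^\infty$ precisely because such pockets may exist. A bi-infinite geodesic through $o$ does give the avoidance you want, but only when $d(o,v)>2R(n-1)$, not merely $d(o,v)>R(n-1)$; so your argument as written would yield at best $d^*\le 2R|\Pi'|$, not the stated bound with constant~$R$.

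Your isoperimetric patch is a genuinely different and valid route: identifying $\Pi'$ with the outer vertex-boundary of the component $S$ of $o$ and using $|S|\ge |B_{d^*-1}|\asymp (d^*)^d$ together with the isoperimetric inequality $|\Pi'|\gtrsim |S|^{(d-1)/d}$ does give $d^*\lesssim |\Pi'|$. This bypasses Lemma~\ref{lem:1} entirely, which is nice, but as you note it only proves the lemma with some constant $C=C(G)$ in place of the specific $R$ from Lemma~\ref{lem:1}.

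The paper's proof is shorter and recovers the exact constant~$R$. It exploits the bi-infinite geodesic through $o$ in a slightly different way: rather than asking one ray to avoid a ball containing $\Pi'$, it observes that \emph{both} rays must hit the cutset $\Pi'$, say at points $x$ and $y$. If $\Pi'\cap B_{R|\Pi'|}=\varnothing$, then $d(o,x),d(o,y)>R|\Pi'|$, and since $x,y$ lie on opposite sides of a geodesic through $o$, one has $d(x,y)=d(o,x)+d(o,y)>2R|\Pi'|$. This directly contradicts the diameter bound $\mathrm{diam}(\Pi')\le R(|\Pi'|-1)$ from $R$-connectedness. The trick you were missing is to lower-bound $\mathrm{diam}(\Pi')$ via the two intersection points on the geodesic, rather than to argue that $\Pi'$ cannot live inside a single distant ball.
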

	
	\begin{proof}
		Assume that $\Pi$ does not intersect $B_{R|\Pi|}$. Since $G$ is an infinite transitive graph, there is a bi-infinite geodesic path passing through $o$. The cutset $\Pi$ has to intersect each of the two geodesic rays this path induces from $o$. As a result, one has $\mathrm{diam}(\Pi)>2R|\Pi|$. Since $R$ satisfies the conclusion of Lemma~\ref{lem:1}, we also have $\mathrm{diam}(\Pi)\leq R|\Pi|$, which is contradictory.
	\end{proof}
	
	The next lemma is related to Lemma~\ref{lem:5} and \cite[Proposition 5]{funar2013}. The existence of a bi-infinite geodesic path will once again be the key of the proof.
	
	\begin{lemma}\label{lem:3}
		Let $G$ be an infinite transitive graph. Let $F$ be a finite subset of $V(G)$ containing $o$. Let $\Pi$ be a cutset disconnecting $F$ and $\infty$. Then, we have $\mathrm{diam}(\Pi)\geq \frac{\mathrm{diam}(F)}{2}$.
	\end{lemma}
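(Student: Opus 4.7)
The plan is to argue by contradiction, in the style of Lemma~\ref{lem:2}. I would assume $\mathrm{diam}(\Pi) < D/2$ where $D := \mathrm{diam}(F)$, pick two vertices $u,v\in F$ realising $d(u,v)=D$, and use bi-infinite geodesics through $u$ and through $v$ to exhibit points of $\Pi$ close to each of them, then close the triangle.

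First I would recall the standard fact that an infinite, locally finite, connected transitive graph admits a bi-infinite geodesic through every vertex: by transitivity one can centre at $o$ a geodesic segment of any prescribed even length $2n$ (take a pair of vertices at distance $2n$, which exists because $G$ is infinite, and translate the midpoint of a connecting geodesic to $o$), and then a König/compactness argument on the finite set of initial edges produces a bi-infinite geodesic through $o$. Translating, such a geodesic exists through any chosen vertex.

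Next, fix $u,v\in F$ with $d(u,v)=D$, and bi-infinite geodesics $\gamma_u$ and $\gamma_v$ passing through $u$ and $v$ respectively. Each of the two rays of $\gamma_u$ is infinite and starts at $u\in F$, so by definition of a cutset each must meet $\Pi$. Let $p_u^\pm\in\Pi$ be meeting points on the two sides; since these three points lie on a geodesic with $u$ between them, one has the equality
\begin{equation}
d(p_u^-,p_u^+)=d(p_u^-,u)+d(u,p_u^+).
\end{equation}
Consequently $\min(d(u,p_u^-),d(u,p_u^+))\le \tfrac12 d(p_u^-,p_u^+)\le \tfrac12 \mathrm{diam}(\Pi)$. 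Pick $p_u\in\Pi$ attaining this minimum, and obtain $p_v\in\Pi$ analogously with $d(v,p_v)\le \tfrac12\mathrm{diam}(\Pi)$.

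Finally, the triangle inequality through $p_u$ and $p_v$ gives
\begin{equation}
D=d(u,v)\le d(u,p_u)+d(p_u,p_v)+d(p_v,v)\le \tfrac12\mathrm{diam}(\Pi)+\mathrm{diam}(\Pi)+\tfrac12\mathrm{diam}(\Pi)=2\,\mathrm{diam}(\Pi),
\end{equation}
hence $\mathrm{diam}(\Pi)\ge D/2$. No step looks genuinely hard; the only subtlety is ensuring bi-infinite geodesics through prescribed vertices, but this is classical and clean via transitivity together with local finiteness. The rest is a one-line geometric bookkeeping, exactly as in the proof of Lemma~\ref{lem:2}.
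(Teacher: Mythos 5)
Your proof is correct, and it takes a genuinely different route from the paper's. The paper introduces the auxiliary set $\tilde F$ of vertices reachable from $F$ without crossing $\Pi$ and then splits into two cases according to whether some ball $B_{n/4}(v)$ fits inside $\tilde F$; only one case uses a bi-infinite geodesic, and the other case relies on the observation that the external vertex-boundary of $\tilde F$ sits inside $\Pi$. You bypass the case split entirely by running the bi-infinite geodesic argument at \emph{both} endpoints $u$ and $v$: each ray of $\gamma_u$ must meet $\Pi$, and the key observation that the two meeting points $p_u^\pm$ are collinear on a geodesic with $u$ between them gives $\min(d(u,p_u^-),d(u,p_u^+))\le\tfrac12\,\mathrm{diam}(\Pi)$ for free, with no need to control from below how far the rays travel before meeting $\Pi$. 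The final triangle inequality through $p_u$ and $p_v$ then closes the argument. This is cleaner and more uniform than the paper's version and yields the same constant $1/2$; the only ingredient you add is the existence of bi-infinite geodesics through arbitrary vertices (infinite, locally finite, transitive), which the paper also invokes in Lemma~\ref{lem:2} and which you justify correctly via transitivity plus a K\H{o}nig compactness argument. One stylistic remark: you announce a proof by contradiction but in fact give a direct derivation of $D\le 2\,\mathrm{diam}(\Pi)$; the contradiction framing can simply be dropped.
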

	
	\begin{proof}	
		Let $\tilde F\subset V(G)$ denote the set of all vertices that can be reached from $F$ by a path avoiding $\Pi$. If $F$ is connected, this is simply the connected component of $F$ in $G\setminus \Pi$. We set $m=\mathrm{diam}(\Pi)$ and $n=\mathrm{diam}(F)$. Let us prove that $m\geq \frac{n}{2}$.
		
		First, assume that there is a vertex $v\in \tilde F$ such that $B_{n/4}(v)\subset \tilde F$. As in the proof of Lemma~\ref{lem:2}, the existence of a bi-infinite geodesic path passing through $v$ yields $m> 2n/4=n/2$, hence the desired result.
		
		Now, assume that, on the contrary, for every vertex $v\in \tilde F$, one has $B_{n/4}(v)\not\subset \tilde F$. In particular, one can take $u$ and $v$ two vertices of $F$ at distance $n$ of each other, and then find vertices $u'$ and $v'$ in $\Pi$ such that $d(u,u')\leq n/4$ and $d(v,v')\leq n/4$. Indeed, by definition of $\tilde F$, its external vertex-boundary is a subset of $\Pi$ --- and it is equal to $\Pi$ if $\Pi$ is a minimal cutset. By considering $u'$ and $v'$, one gets $m\geq n - 2n/4=n/2$.
	\end{proof}
	
	\begin{lemma}\label{lem:4}
		Let $G$ be a transitive graph of polynomial growth with $d\ge 1$. There exists $c>0$ such that the following holds. Let $F$ be a finite connected subset of $V(G)$ containing $o$. Let $\Pi$ be a cutset disconnecting $F$ and $\infty$. Then, we have $|\Pi|\geq c|F|^{\frac{d-1}{d}}$.
	\end{lemma}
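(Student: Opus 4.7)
The plan is to identify $\Pi$ with a set containing the external vertex boundary of a well-chosen finite region, and then invoke a standard isoperimetric inequality for transitive graphs of polynomial growth. Concretely, I define $\tilde F$ to be the set of vertices reachable from $F$ by a path avoiding $\Pi$, i.e.\@ the union of the connected components of $G\setminus \Pi$ that meet $F$. Since $\Pi\cap F=\varnothing$, we have $F\subset \tilde F$, and $\tilde F$ is finite: otherwise, as $F$ is finite, one of the connected components of $G\setminus\Pi$ containing a vertex of $F$ would be infinite, and K\"onig's lemma (applied in the locally finite graph $G$) would provide an infinite self-avoiding path starting in $F$ and avoiding $\Pi$, contradicting the cutset hypothesis.

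Next, I would observe that the external vertex boundary $\partial^{V}\tilde F:=\{v\notin \tilde F: v\sim u \text{ for some } u\in \tilde F\}$ is contained in $\Pi$: if some $v\notin \tilde F$ were adjacent to some $u\in\tilde F$ but did not belong to $\Pi$, then concatenating the edge $\{u,v\}$ to a path from $F$ to $u$ avoiding $\Pi$ would place $v$ in $\tilde F$, a contradiction. Hence $|\Pi|\ge |\partial^{V}\tilde F|$.

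It then remains to invoke the isoperimetric inequality $|\partial^{V} A|\ge c|A|^{(d-1)/d}$, valid for every finite $A\subset V(G)$. This follows from the Coulhon--Saloff-Coste isoperimetric inequality on Cayley graphs of finitely generated groups whose volume grows like $n^d$, transported to $G$ via the quasi-isometry with a Cayley graph of a nilpotent group provided by Gromov--Trofimov; quasi-isometries between bounded-degree graphs preserve such isoperimetric profiles up to multiplicative constants. Applying this to $\tilde F$ and using $|\tilde F|\ge |F|$ yields $|\Pi|\ge c|F|^{(d-1)/d}$, as claimed. The only nontrivial input is the isoperimetric inequality itself; the combinatorial reduction above is elementary and, incidentally, does not use the assumed connectedness of $F$.
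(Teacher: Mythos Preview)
Your proof is correct and follows essentially the same route as the paper's: define $\tilde F$ as the set of vertices reachable from $F$ while avoiding $\Pi$, observe that the boundary of $\tilde F$ is controlled by $\Pi$, and apply an isoperimetric inequality of the form $|\partial A|\gtrsim |A|^{(d-1)/d}$. The only difference is the source of the isoperimetric inequality: the paper invokes Lemma~7.2 of \cite{lyonsmorrisschramm} (using that amenable transitive graphs are unimodular \cite{soardiwoess}) and works with the edge boundary, whereas you cite Coulhon--Saloff-Coste on the nilpotent Cayley graph and transport it back via the Gromov--Trofimov quasi-isometry, working with the vertex boundary.
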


	\begin{proof}
		Let $\tilde{F}\subset V$ be the set of vertices that can be connected to $F$ by a path not intersecting $\Pi$. Then $F\subset \tilde{F}$ and every edge of $\partial \tilde{F}$ intersects $\Pi$. As $|B_r| \asymp r^d$ and since every transitive amenable graph is unimodular \cite{soardiwoess}, Lemma 7.2 of \cite{lyonsmorrisschramm} yields the existence of $c>0$ such that for every choice of $(F,\Pi)$, we have $|\Pi|\geq c |\tilde F|^{\frac{d-1}{d}}\geq c |F|^{\frac{d-1}{d}}$.
	\end{proof}
	
	\subsection{Spheres and annuli}
	
	The most basic example of a cutset is the sphere of radius $r$. In the hypercubic lattice of dimension $d\ge 2$, spheres are coarsely connected, in the sense of Lemma~\ref{lem:1} (one can take $R=2$). Actually, for hypercubic lattices, spheres centred at $o$ are \emph{minimal} cutsets disconnecting  $o$ and $\infty$. Such statements are not true any more for general graphs --- not even for transitive graphs of polynomial growth satisfying $d\ge 2$.
	
	In order to recover coarse connectedness of spheres, we use a notion of exposed sphere \cite{MR3909043, pete2008note, timar2013}, that only contains the points accessible from infinity, and where the finite ``pockets'' are removed.  For $r\ge 0$ and $x$ a vertex of a graph $G$, let $S_r^\infty(x)$ \label{page:123456} denote the set of all vertices $y$ such that $d(x,y)=r$ and there exists an infinite self-avoiding path starting at $y$ and intersecting $B_r(x)$ only at $y$. Notice that replacing the self-avoiding condition by ``visiting infinitely many vertices'' in this definition yields the same set $S_r^\infty(x)$.
	If some vertex $o$ is fixed as a root in $G$, we may write $S_r^\infty$ for $S_r^\infty(o)$.
	
	\begin{remark}\label{rem:coarse-conn-spheres}
		For every $r\ge 1$, the set $S_r^\infty$ is a \emph{minimal} cutset between $B_{r-1}$ and $\infty$. In particular, Lemma~\ref{lem:1} yields coarse connectedness of these sets. 
	\end{remark}
	
	Not only does $S_r^\infty$ disconnect $B_{r-1}$ and $\infty$ but it even disconnects it from $\partial B_{2r}$, as stated in Lemma~\ref{lem:5}. This lemma corresponds to  Proposition 5 in \cite{funar2013}. For the reader's convenience, we have included below its short and nice proof.
	
	\begin{lemma}
		\label{lem:5}
		Let $G$ be an infinite transitive graph and $o$ a vertex of $G$. Let $r\ge 0$ and let $\gamma$ be a finite path starting in $B_r$ and that intersects $\partial B_{2r}$. Then, the path $\gamma$ intersects $S_r^\infty$.
	\end{lemma}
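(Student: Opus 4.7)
I would let $j$ be the smallest index with $d(o,\gamma_j)\ge 2r$ (the first vertex of $\gamma$ at distance at least $2r$ from $o$), and let $i<j$ be the largest index with $\gamma_i\in B_r$. Since $d(o,\cdot)$ varies by at most one along a path, this forces $d(o,\gamma_i)=r$, so that $\gamma_i\in S_r$ and the sub-path $\pi:=(\gamma_i,\gamma_{i+1},\ldots,\gamma_j)$ meets $B_r$ only at its initial vertex $\gamma_i$. The proof then reduces to the claim $\gamma_i\in S_r^{\infty}$, i.e.\@ exhibiting an infinite self-avoiding path starting at $\gamma_i$ that intersects $B_r$ only at $\gamma_i$.

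To build such an escape path, I would extend $\pi$ past $\gamma_j$. Since $G$ is locally finite and infinite, König's lemma, together with the compactness of rays, produces a bi-infinite geodesic $\beta$ through $\gamma_j$, with $\beta(0)=\gamma_j$. The triangle inequality then yields
\begin{equation}
d(o,\beta(n)) \;\ge\; d(o,\gamma_j)-|n| \;\ge\; 2r-|n|,
\end{equation}
so $\beta(n)\notin B_r$ whenever $|n|<r$; in particular the two rays of $\beta$ emanating from $\gamma_j$ spend their first $r-1$ steps outside $B_r$. Moreover, $\beta$ is self-avoiding and $B_r$ is finite, so $\beta$ visits $B_r$ at only finitely many indices, and consequently at least one of its two rays from $\gamma_j$ admits an infinite tail lying entirely in $G\setminus B_r$. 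I would then concatenate $\pi$ with such a tail (bridged through a short sub-path of $\beta$ if the tail does not start at $\gamma_j$), and splice the resulting infinite path at its last self-intersection to restore self-avoidance.

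The delicate point, which I expect to be the main obstacle, is to verify that the resulting infinite self-avoiding path from $\gamma_i$ meets $B_r$ \emph{only} at $\gamma_i$. In the favourable case where the chosen ray of $\beta$ remains in $G\setminus B_r$ throughout, this is immediate since $\pi\setminus\{\gamma_i\}\subset G\setminus B_r$. In the awkward case where the ray re-enters $B_r$ before settling into its eventual $B_r$-free tail, I would invoke the standard last-exit argument: the final vertex of the concatenated path lying in $B_r$ automatically belongs to $S_r^\infty$, and a careful inspection — exploiting both the inequality $d(o,\beta(n))\ge 2r-|n|$ and the fact that $\pi\setminus\{\gamma_i\}$ avoids $B_r$ — shows that this last visit must coincide with $\gamma_i$ itself, completing the proof.
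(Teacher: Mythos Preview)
Your overall strategy is the same as the paper's, but there is a genuine gap in the ``awkward case''. The claim that ``this last visit must coincide with $\gamma_i$ itself'' is simply false, and the two ingredients you cite cannot rescue it. The inequality $d(o,\beta(n))\ge 2r-|n|$ only keeps $\beta$ out of $B_r$ for $|n|<r$; it says nothing for $|n|\ge r$. Concretely, take $G=\mathbb{Z}^2$, $o=(0,0)$, and let $\gamma$ go from $o$ to $(1,r-1)$ and then monotonically to $(r,r)$, so that $\gamma_i=(1,r-1)$ and $\gamma_j=(r,r)$. The bi-infinite geodesic $\beta$ that runs along the segment from $(0,r)$ to $(r,r)$ and then down to $(r,0)$, extended to $(-n,r)$ and $(r,-n)$, satisfies $\beta(\pm r)\in S_r$. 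Whichever ray you choose, the concatenated path from $\gamma_i$ last visits $B_r$ at $\beta(r)$ or $\beta(-r)$, neither of which is $\gamma_i$ nor even a vertex of $\gamma$. Your last-exit argument then produces a point of $S_r^\infty$ on $\beta$, not on $\gamma$, and the lemma is not proved.

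The missing idea, and the one clean fix, is to anchor the bi-infinite geodesic at a vertex $y$ with $d(o,y)=2r+1$ rather than $2r$. Since $\gamma$ intersects $\partial B_{2r}$, such a $y$ exists as a neighbour of some vertex $x$ of $\gamma$ at distance $2r$. If the geodesic $\beta$ through $y$ hit $B_r$ at times $t_1>0$ and $t_2<0$, then $t_1\ge r+1$ and $-t_2\ge r+1$, whence $t_1-t_2\ge 2r+2$; but $\beta(t_1),\beta(t_2)\in B_r$ forces $t_1-t_2\le 2r$, a contradiction. Hence one \emph{entire} ray of $\beta$ avoids $B_r$, the awkward case disappears, and concatenating $\gamma$ (from its last visit to $S_r$ before $x$) with the edge $\{x,y\}$ and that ray gives the desired witness. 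This is precisely what the paper does; your version, by stopping at distance $2r$, loses exactly the strict inequality that makes the argument close.
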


	\begin{proof}
		Let us fix $x$ and $y$ such that $x$ is a vertex of $\gamma$ at distance $2r$ from the origin $o$ and $y$ is a neighbour of $x$ satisfying $d(o,y)=2r+1$. Since $G$ is infinite and transitive, we can fix some bi-infinite geodesic path $\gamma'$ passing through $y$ at time 0. It is impossible for $\gamma'$ to intersect $B_r$ in both positive and negative times. Indeed,  since  $\gamma'$ is geodesic, this would imply the existence of two points in $B_r$ at distance larger than $2r+1$ away of each other.
		
		Therefore, by following $\gamma'$ from $y$ in the positive or the negative direction, we get an infinite self-avoiding path $\gamma''$ that does not intersect $B_r$. Since $\gamma$ starts in $B_r$ and intersects $\partial B_{2r}$, it visits at least one vertex at distance exactly $r$ from the origin. Take $k$ to be the largest integer such that $d(o,\gamma_k)=r$ and set $v=\gamma_k$. This vertex $v$ necessarily belongs to $S_r^\infty$. Indeed, following the path $\gamma$ started at time $k$, then the edge $\{x,y\}$, and then the path $\gamma''$ yields a path that starts at $v$ and then leaves $B_r$ forever (and that visits infinitely many vertices).  
	\end{proof}
	
	While studying percolation on $\mathbb{Z}^d$, it is customary to use not only spheres but also annuli. Annuli will also be useful when $G$ is a transitive graph of polynomial growth with $d\ge2$. The next lemma provides some control on their geometry.
	
	\begin{lemma}[Control on the intrinsic diameter of annuli]
		\label{lem:6}
		Let $G$ be a transitive graph of polynomial growth with $d\ge 2$. Let $R\ge 1$ be such that the conclusion of Lemma~\ref{lem:1} holds, and let $n \ge k \ge R$. Then for all $x,y\in S^\infty_n$, there exists a path from $x$ to $y$ within the $2k$-neighbourhood of $S^\infty_n$, of length at most $3k |B_{3n}|/|B_k|$.
	\end{lemma}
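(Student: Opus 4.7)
The strategy is to first find a sequence of vertices of $S_n^\infty$ that connects $x$ to $y$ by jumps of length at most $k$, and then to join consecutive vertices of this sequence by short geodesics in $G$.

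By Remark~\ref{rem:coarse-conn-spheres}, the set $S_n^\infty$ is a minimal cutset between $B_{n-1}$ and $\infty$; Lemma~\ref{lem:1} (whose proof never uses that $o$ is a point rather than a finite set) then implies that $S_n^\infty$ is $R$-connected, hence $k$-connected since $k\geq R$. We may therefore pick a sequence $x=z_0,z_1,\dots,z_\ell=y$ of vertices of $S_n^\infty$ with $d(z_i,z_{i+1})\leq k$ for every $i$, and we choose $\ell$ to be the minimal length of such a sequence.

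The minimality of $\ell$ forces $d(z_i,z_j)>k$ whenever $|i-j|\geq 2$: otherwise, one could replace the subsequence $z_i,z_{i+1},\dots,z_j$ by the shortcut $z_i,z_j$ and reduce the length. Consequently, the balls of radius $\lfloor k/2\rfloor$ centred at the vertices $z_{2i}$ for $i\in\{0,1,\dots,\lfloor\ell/2\rfloor\}$ are pairwise disjoint. As each $z_{2i}$ belongs to $S_n^\infty\subset B_n$ and $k\leq n$, these balls lie inside $B_{n+k/2}\subset B_{3n}$. By transitivity each one has cardinality $|B_{\lfloor k/2\rfloor}|$, and the polynomial growth \eqref{eq:4} (doubling) guarantees that $|B_{\lfloor k/2\rfloor}|$ and $|B_k|$ differ only by a bounded multiplicative factor. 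Putting these observations together and absorbing the doubling constant gives the estimate $\ell\leq 3|B_{3n}|/|B_k|$.

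It remains to realise the crossing in $G$. For each $i\in\{0,\dots,\ell-1\}$, let $\gamma_i$ be a shortest path in $G$ from $z_i$ to $z_{i+1}$; its length is at most $k$. The concatenation of the $\gamma_i$'s is a path from $x$ to $y$ in $G$ of length at most $k\ell\leq 3k|B_{3n}|/|B_k|$. Moreover, every vertex of $\gamma_i$ is at distance at most $k$ from $z_i\in S_n^\infty$, so the concatenated path lies entirely in the $k$-neighbourhood, and \emph{a fortiori} in the $2k$-neighbourhood, of $S_n^\infty$. I expect the main delicate point to be the packing argument in the second paragraph: the radii and the index-subsampling have to be tuned so that the doubling bookkeeping produces exactly the constant $3$ and the ambient ball $B_{3n}$ appearing in the statement.
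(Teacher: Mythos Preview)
Your argument is sound up to the point where you try to recover the exact constant $3$. The packing you perform uses balls of radius $\lfloor k/2\rfloor$, which yields $\ell\le 2|B_{3n}|/|B_{\lfloor k/2\rfloor}|$; to conclude $\ell\le 3|B_{3n}|/|B_k|$ you would need $|B_k|\le\tfrac32|B_{\lfloor k/2\rfloor}|$. But the doubling ratio $|B_k|/|B_{\lfloor k/2\rfloor}|$ is of order $2^d$ on a graph of growth exponent $d$, so for $d\ge 2$ it is at least roughly $4$, never $\le\tfrac32$. No tuning of the subsampling index repairs this: once you only know $d(z_i,z_j)>k$ for $|i-j|\ge 2$, you are stuck with half-radius balls and hence with a graph-dependent constant $C(G)$ in place of $3$. (That weaker statement would in fact suffice for every later use of the lemma in the paper, but it is not the lemma as stated, and your closing paragraph already flags that you have not closed this gap.)

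The paper avoids the doubling loss by a different selection. Instead of taking a minimal $k$-chain in $S_n^\infty$, it first fixes any path $(v_0,\dots,v_\ell)$ in the $R$-neighbourhood of $S_n^\infty$ joining $x$ to $y$, and then greedily extracts a subsequence $w_0,w_1,\dots,w_t$ by always jumping to the vertex just after the \emph{last} visit of the path to $B_{2k}(w_{i-1})$. This forces $d(w_i,w_j)>2k$ for all $i\neq j$, so the full-radius balls $B_k(w_i)$ are already pairwise disjoint and contained in $B_{3n}$, giving $t+1\le|B_{3n}|/|B_k|$ with no doubling correction. Consecutive $w_i$'s are at distance at most $2k+1$, and $(2k+1)(t+1)\le 3k\,|B_{3n}|/|B_k|$ gives the constant exactly. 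You can graft this greedy step onto your own chain $(z_i)$ and recover the stated bound.
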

	
	\begin{proof}
		Let $x,y\in S^\infty_n$. By definition of $R$ and $S_n^\infty$, we can fix some path $(v_0,\dots,v_\ell)$ from $x$ to $y$ that stays in the $R$-neighbourhood of $S_n^\infty$. Recursively, we define some new finite sequence of vertices as follows. Set $w_0=v_0=x$. For every $i>0$, let $m_i$ be such that $v_{m_i}\in B_{2k}(w_{i-1})$ and none of the vertices $v_{m_i+1},\dots,v_\ell$ belongs to $B_{2k}(w_{i-1})$. We then set $w_i=v_{m_i+1}$. This process is well-defined until it reaches some $w_t$ that satisfies $d(w_t,y)\leq 2k$, and the process stops there. Notice that the sets $B_k(w_i)$ are disjoint when $i$ ranges over $\{0,\dots,t\}$ and that all these balls are subsets of $B_{3n}$. Therefore, we have $t+1\leq \frac{|B_{3n}|}{|B_k|}$. By means of paths of length at most $2k+1$ each, we can connect $w_0$ to $w_1$, \dots, $w_{t-1}$ to $w_t$, and $w_t$ to $y$. Concatenating theses paths produces a path of length at most $(2k+1)(t+1)$ that stays within the $2k$-neighbourhood of $S^\infty_n$. As $(2k+1)(t+1)\leq 3k |B_{3n}|/|B_k|$, the proof is complete.
	\end{proof}

	\subsection{Crafting many separated paths}
	
	In Section~\ref{sec:renorm-corr-funct}, we will need many paths that stay away from each other in $G$. The purpose of the current subsection is to explain how to get such paths. The proper (deterministic) statement is given by Lemma~\ref{lem:7}, which makes use of the following definition.
	
	A path $\gamma:\mathbb{N}\to V$ is said to be a \defini{$c$-quasi-geodesic ray} if for any $m$ and $n$ in $\mathbb{N}$, we have $d(\gamma_m,\gamma_n)\ge c|m-n|$.

	\begin{lemma}
		\label{lem:7}
		Let $G$ be a transitive graph of polynomial growth with $d\ge 2$. Then, there exist $c>0$ such that for every  $n\geq 1/c$, for every $a\in [1,cn]$, one can find $\lceil \frac{n}{a}\rceil$ distinct $c$-quasi-geodesic rays that intersect $B_n$ and that stay at distance at least $ca$ from each other --- if $x$ belongs to some ray  and $y$ to another one, then $d(x,y)\geq ca$.
	\end{lemma}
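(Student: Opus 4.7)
The plan is to construct the rays as left translates of a single quasi-geodesic ray, leveraging the nilpotent structure of $G$ provided by Gromov--Trofimov.

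Since the conclusion is invariant under quasi-isometry (up to adjusting $c$), Gromov--Trofimov allows us to assume $G$ is a Cayley graph $H$ of a finitely generated nilpotent group $\Gamma$ of growth exponent $d\ge 2$. The Bass--Guivarc'h formula for the growth of nilpotent groups then forces $\mathrm{rk}(\Gamma^{\mathrm{ab}})\ge 2$: indeed, if $\mathrm{rk}(\Gamma^{\mathrm{ab}})\le 1$, a classical argument (looking at the alternating commutator pairing on any $2$-step quotient of $\Gamma$) shows that $\Gamma$ is virtually $\mathbb{Z}$, contradicting $d\ge 2$. Pick $u,v\in\Gamma$ of bounded word length whose images $\bar u,\bar v$ in $\Gamma^{\mathrm{ab}}\otimes\mathbb R$ are linearly independent. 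Since $\bar u$ has infinite order, the cyclic ray $\rho_m=u^m$ ($m\ge 0$) is a $c_1$-quasi-geodesic in $H$ (horizontal elements of a nilpotent group have linear word length growth).

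Set $M=\lceil n/a\rceil$ and $h_i=v^{\lceil ia\rceil}$ for $i=0,\ldots,M-1$. Each $h_i$ has word length $\lesssim ia\le n$, so $h_i\in B_n$ as soon as $a\le cn$ for $c>0$ small enough. The candidate rays are the left translates $h_i\rho$; each is a $c_1$-quasi-geodesic ray (left multiplication is an isometry of $H$) and intersects $B_n$ at $h_i$. The crux is the pairwise separation $d_H(h_i\rho,h_j\rho)\ge c_0|i-j|a$ for $i\ne j$. To obtain it, project any two points $h_i\rho_m$, $h_j\rho_n$ to $\Gamma^{\mathrm{ab}}$ via the $1$-Lipschitz abelianisation map $\pi$: their projected difference is $(\lceil ja\rceil-\lceil ia\rceil)\bar v+(n-m)\bar u$, whose $\ell^1$-norm in $\Gamma^{\mathrm{ab}}\otimes\mathbb R$ is at least $c'|i-j|a$ by linear independence of $\bar u,\bar v$. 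The Lipschitz property then yields $d_H(h_i\rho_m,h_j\rho_n)\ge c_0|i-j|a$. Transferring back to $G$ through the quasi-isometry gives $\lceil n/a\rceil$ $c$-quasi-geodesic rays with all the required properties.

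The main obstacle is this separation estimate in the non-abelian setting. The word length in $\Gamma$ is not additive with respect to any fixed factorisation, and commutator terms lying in $\Gamma_2$ could, a priori, create unexpected shortcuts that weaken the distance bound. Transversality of $\bar u$ and $\bar v$ in the abelianisation is exactly what guarantees that such higher-order commutator contributions land in $\ker\pi$ and therefore do not affect the projected linear lower bound; dealing cleanly with the ``bounded'' corrections coming from the ceilings and from the quasi-isometry back to $G$ then only costs universal multiplicative constants.
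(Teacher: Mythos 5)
Your proof is correct and follows essentially the same route as the paper's: reduce via Gromov--Trofimov to a nilpotent Cayley graph, identify a rank-$2$ abelian quotient, and exploit the $1$-Lipschitz projection onto it to propagate well-separated quasi-geodesic rays from $\mathbb{Z}^2$ back up to $\Gamma$ and hence to $G$. Where the paper cites~\cite{hutchcroft2021nontriviality} for the surjection $\Gamma\to\mathbb{Z}^2$ and lifts paths abstractly, you re-derive $\mathrm{rk}(\Gamma^{\mathrm{ab}})\ge 2$ from Bass--Guivarc'h and build the lifts explicitly as translates $v^{\lceil ia\rceil}u^{\mathbb{N}}$ of a single quasi-geodesic; the key separation estimate via the abelianisation projection is the same in both cases.
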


	\begin{proof}
		By \cite{gromov81,trofimov85polynomial}, there is a finitely generated nilpotent group $\Gamma$ such that $G$ is quasi-isometric to any Cayley graph of $\Gamma$. Let $S$ be some finite generating subset of $\Gamma$. By \cite[Lemma 3.23]{hutchcroft2021nontriviality}, because we have $d\ge2$, we can pick a surjective homomorphism $\pi : \Gamma\to\mathbb{Z}^2$.
		
		Notice that the conclusion of Lemma~\ref{lem:7} obviously holds for the square lattice. As this conclusion is stable under quasi-isometry, it holds for the Cayley graph of $\mathbb{Z}^2$ relative to the generating subset $\pi(S)$. By using $\pi$, lifting suitable paths for this Cayley graph of $\mathbb{Z}^2$ yields suitable paths for the Cayley graph of $\Gamma$ relative to $S$.
		As this graph is quasi-isometric to $G$, we have the desired result.
	\end{proof}
	
	\section{Renormalisation of the corridor function}\label{sec:renorm-corr-funct}
	
	Recall that we fixed $G=(V,E)$ a transitive graph of polynomial growth and that we assumed its growth exponent $d$ to satisfy $d\ge2$. The growth exponent $d$ is the unique integer such that $|B_n|\asymp n^d$.  We fix $R\ge 1$ some constant such that the conclusion of Lemma~\ref{lem:1} holds.
	
	\begin{proposition}\label{prop:2}
		Let $G$ be a transitive graph of polynomial growth with $d\ge 2$. Let $p\in (0,1)$ be such that 
		\begin{equation}
			\limsup_n \kappa_p(n \log^{3d} n,n)>0.\label{eq:6}
		\end{equation}
		Then, for every $q\in (p,1]$, for every $n$ large enough, we have
		\begin{equation}
			\mathrm P_q[\{B_{n/10}\lr{}\partial B_{n}\}\cap U(n/5,n/2)]\geq 1-e^{-\sqrt{n}}.\label{eq:7}
		\end{equation}
	\end{proposition}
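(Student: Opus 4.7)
The plan is to bootstrap the hypothesis on the corridor function into a high-probability ``renormalised'' corridor bound at an intermediate parameter $p_1\in(p,q)$, and then derive both the connection event and the uniqueness event from it. Fix $c_0>0$ and an unbounded $\mathcal N\subset\mathbb N$ so that $\kappa_p(n\log^{3d}n,n)\ge c_0$ for every $n\in\mathcal N$, and pick $p_1\in(p,q)$.

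The core step is to establish a renormalised corridor bound of the form
\[
\kappa_{p_1}(L,K)\;\ge\;1-e^{-2\sqrt n}
\]
for every sufficiently large $n$, at some length $L\ge n$ and thickness $K\le n/100$. Given a path $\gamma$ with $\mathrm{length}(\gamma)\le L$, I would subdivide it into $\ell$ consecutive subpaths of length $k\log^{3d}k$ with endpoints $o(\gamma)=a_0,\ldots,a_\ell=e(\gamma)$, where $k\in\mathcal N$ is chosen appropriately small compared to $n$. Around each pair $(a_i,a_{i+1})$, construct $M=\lceil C\sqrt n\rceil$ alternative paths from $a_i$ to $a_{i+1}$, each of length at most $k\log^{3d}k$ and pairwise at distance $\ge 2k+1$, so that their $k$-thickenings are disjoint. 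The hypothesis gives that each alternative succeeds (connects its endpoints inside its $k$-thickening) under $\mathrm P_p$ with probability $\ge c_0$. By independence across alternatives (disjoint thickenings) and across subdivision indices (disjoint subpaths), a union bound shows that at least one alternative succeeds at every subdivision level simultaneously with probability $\ge 1-\ell(1-c_0)^M\ge 1-e^{-2\sqrt n}$, provided $M$ is of order $\sqrt n$. Concatenating the $\ell$ successful alternatives at the prescribed $a_i$ yields an open path from $o(\gamma)$ to $e(\gamma)$ supported in the $K$-thickening of $\gamma$, with $K$ only slightly larger than $k$. The sprinkling from $p$ to $p_1$ is used to close small residual gaps coming from possible mismatches between the alternative-path endpoints actually produced by the geometric construction and the prescribed subdivision points.

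From this renormalised estimate, both halves of the target event follow. For the connection event, apply the bound to a geodesic of length $n$ from $o$ to $\partial B_n$; monotonicity in the parameter transfers $\mathrm P_{p_1}[B_{n/10}\lr{}\partial B_n]\ge 1-e^{-2\sqrt n}$ to $\mathrm P_q$. For the uniqueness event $U(n/5,n/2)$, I would union-bound over pairs $(x,y)\in B_{n/5}\times B_{n/5}$ (at most $Cn^{2d}$ such pairs); for each, pick a geodesic $\gamma_{xy}$ of length $\le 2n/5$ whose $K$-thickening is contained in $B_{n/2}$ (possible since $K\le n/10$). The renormalised bound gives $\mathrm P_{p_1}[x\nlr{B_{n/2}}y]\le e^{-2\sqrt n}$, and summing yields $\mathrm P_{p_1}[\piv{n/5}{n/2}]\le Cn^{2d}e^{-2\sqrt n}\le \tfrac12e^{-\sqrt n}$. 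Since $U(n/5,n/2)$ is an increasing event (merging clusters can only decrease their number), the bound transfers to $q$, and combining with the connection estimate proves the proposition.

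The main obstacle is the construction, in the core step, of the $M$ well-separated alternative paths between two prescribed vertices. Lemma~\ref{lem:7} yields well-separated quasi-geodesic \emph{rays} intersecting a ball, not paths between given endpoints, so a genuine adaptation is needed: one could lift a family of parallel paths in $\mathbb Z^2$ through the surjection $\pi:\Gamma\to\mathbb Z^2$ underlying the proof of Lemma~\ref{lem:7}, or assemble two families of rays emanating respectively from $a_i$ and $a_{i+1}$ and splice them near the midpoint, using the sprinkling budget to bridge any small residual mismatch. A secondary calibration difficulty is to choose $k,\ell,L,K,M$ so that $M$ is large enough for the stretched-exponential union bound while $k$ remains small enough that $K\le n/100$ and large enough that the construction of $M$ disjoint alternative thickenings fits into the ambient geometry.
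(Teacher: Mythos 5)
There is a fundamental gap that makes the core step unsalvageable as stated: the target bound $\kappa_{p_1}(L,K)\ge 1-e^{-2\sqrt n}$ is \emph{false} for any $p_1<1$, because the corridor function can never exceed a constant strictly less than $1$. Indeed, for any path $\gamma$, the event $\{o(\gamma)\lr{B_K(\gamma)} e(\gamma)\}$ fails whenever all edges at $o(\gamma)$ are closed, which has probability $(1-p_1)^{\deg o}$; hence $\kappa_{p_1}(L,K)\le 1-(1-p_1)^{\deg o}$ uniformly in $L,K,n$. So no amount of redundancy or calibration of $k,\ell,L,K,M$ can reach a probability exponentially close to $1$ when starting from a fixed vertex. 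The same obstruction ruins the application at the end: $\mathrm P_{p_1}[x\lr{B_{n/2}}y]$ is bounded away from $1$ for fixed $x,y$, so the union bound over $\simeq n^{2d}$ pairs cannot produce a stretched-exponential estimate on $\mathrm P[\piv{n/5}{n/2}]$. A related geometric issue, which you flag as the ``main obstacle'', is indeed fatal in the stated form: it is impossible to produce even two pairwise $(2k+1)$-separated paths that both run from $a_i$ to $a_{i+1}$, since any two paths sharing an endpoint are at distance $0$ from each other; the bottleneck at the endpoints cannot be resolved by splicing or sprinkling without changing the event being estimated.

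The paper's proof is built precisely to route around both problems. First, Lemma~\ref{lem:9} (renormalisation of the corridor function) only upgrades the hypothesis to a \emph{polynomially} small lower bound $\kappa_q(n^C,n)\ge n^{-1/C}$, not one close to $1$. Second, the uniqueness event $U(n/5,n/2)$ is not obtained by connecting fixed points with high probability: it is obtained by the orange-peeling argument. One explores the two clusters that have already crossed deep into the box and shows that, in each of $\gtrsim n/m$ disjoint connected annuli, the two \emph{explored} clusters merge with probability $\ge 1/m^3$; since these trials are conditionally independent given the past exploration, the failure probability decays like $(1-1/m^3)^{n/m}$, which yields $e^{-\sqrt n}$ once $m$ is chosen of order $n^{1/9}$. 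Conditioning on the clusters already being present, rather than trying to create them from fixed vertices, is exactly what circumvents the upper bound $\kappa\le 1-(1-p)^{\deg o}$. If you want to repair your proposal, the event you should be estimating is a \emph{merging} event for crossing clusters (or a connection event for thick \emph{seeds} as in Section~\ref{sec:properties-seeds}), not a two-point connection for fixed vertices; and the high-probability boost must come from many conditionally independent merging attempts (one per annulus), not from a single attempt with a very strong success probability.
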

	
	In order to prove Proposition~\ref{prop:2}, we will mimic the orange-peeling argument developed for percolation on $\mathbb{Z}^d$ \cite[Lemma 7.89]{MR1707339}. Since we will extend this argument in several ways, let us recall briefly how it works to obtain some lower bound on the probability of $U(n,2n)$ for percolation on $\mathbb Z^d$ at a parameter $p>p_c$. First, partition $B_{2n}\setminus B_{n}$ into $n/m$ disjoint annuli of the form $B_{k+m}\setminus B_k$ ($m$ is a fixed constant here, but its role will be important in our context so we keep it explicit). Using the slab technology \cite{grimmett1990supercritical}, one can show that in each of these annuli, any two vertices are connected with probability larger than some positive constant $\delta$. Now, consider two clusters $\mathcal C(x)$ and $\mathcal C(y)$ that cross the annulus $B_{2n}\setminus B_{n}$ and start at some fixed vertices $x$ and $y$ at the boundary of $B_{n}$. Explore them in a recursive manner by revealing the configuration in the annuli $B_{k+m}\setminus B_k$ the one after the other. At each such step, conditionally on the past, there is a probability at least $\delta$ that the clusters $\mathcal C(x)$ and $\mathcal C(y)$ get connected in $B_{k+m}\setminus B_k$. This shows that the probability that $\mathcal C(x)$ and $\mathcal C(y)$ reach the boundary of $B_{2n}$ without merging is smaller than $e^{-\delta n/m}$. Summing over all the possible $x$ and $y$, we get
	\begin{equation}
		\label{eq:8}
		\mathbb P[U(n,2n)]\ge1-|\partial B_{n}|^2e^{-\delta n/m}.
	\end{equation}
	
	However, two difficulties arise when extending this argument to more general graphs. First,  the annuli given by the induced subgraph of $B_{k+m}\setminus B_m$ may not be connected in our setting.  To overcome this, we rather work with the following annuli. For $n\ge m \ge0$, we define
	\begin{equation}
		A(n,m) = \bigcup_{x\in S^\infty_{n}}B_{m}(x).
	\end{equation}
	If $m$ is large enough, Lemma~\ref{lem:1} implies that $A(n,m)$ is connected and Lemma~\ref{lem:6} provides us with a good control on the distances in the graph induced by $A(n,m)$. 
	
	A second and more serious difficulty  is the lack of symmetry, which prevents us from using the slab technology of the Euclidean lattice, and makes the control of the two-point functions in annuli  more delicate. Instead of symmetries, our approach uses a renormalisation property of the corridor function, presented in Lemma~\ref{lem:9}. The latter  relies on a ``sprinkled'' version of the orange-peeling argument, presented in Lemma~\ref{lem:8} below.
	
	\begin{lemma}\label{lem:8} There exists a constant $c>0$ such that the following holds.  Let $p\in (0,1)$, $\delta\in (0,1-p)$ and $n\ge m\ge R$. If
		\begin{equation}
			\label{eq:9}
			\kappa_{p}({3}m{|B_{n}|}/{|B_{m}|},m)\geq \delta,
		\end{equation}
		then, for every $\ell\ge1$, we have
		\begin{equation}
			\label{eq:10}
			\kappa_{p+\delta}(\ell,n)\geq\delta^2 -\ell|B_n|^2\exp\left(-\frac{c\delta^3n}{m}\right).
		\end{equation}
	\end{lemma}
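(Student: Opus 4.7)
The plan is to adapt the orange-peeling argument sketched just before the lemma, with two modifications: the slabs of $\mathbb{Z}^d$ are replaced by the annuli $A(k,m)=\bigcup_{x\in S_k^\infty}B_m(x)$, and sprinkling is added to compensate for the lack of Euclidean symmetry. Fix a path $\gamma=(\gamma_0,\ldots,\gamma_\ell)$, write $u=\gamma_0$ and $v=\gamma_\ell$, and work in the monotone coupling, decomposing $\omega_{p+\delta}$ as $\omega_p$ together with an independent $\tfrac{\delta}{1-p}$-sprinkle on the $p$-closed edges.

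First I would introduce a regularity event $\mathcal R$: for every $j\in\{0,1,\ldots,\ell\}$, any two $p$-clusters in $B_n(\gamma_j)$ that each join $B_m(\gamma_j)$ to $\partial B_n(\gamma_j)$ belong to the same $(p+\delta)$-cluster inside $B_n(\gamma_j)$. To bound $\mathbf P[\mathcal R^c]$, fix $j$ and two vertices $x,x'\in B_m(\gamma_j)$, and condition on both of their $p$-clusters reaching $\partial B_n(\gamma_j)$. I would explore $\asymp n/m$ disjoint annuli $A(k,m)$ centred at $\gamma_j$ along an arithmetic progression of radii up to $\sim n$. By Lemma~\ref{lem:5}, each cluster intersects the exposed sphere $S_k^\infty(\gamma_j)$; by Lemma~\ref{lem:6}, any two such crossing points are joined by a path inside $A(k,m)$ of length at most a constant multiple of $M=3m|B_n|/|B_m|$; and the assumption $\kappa_p(M,m)\geq \delta$ combined with the sprinkle yields a conditional merger probability $\gtrsim \delta^2$ inside that annulus. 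Iterating across the annuli and then union-bounding over the $|B_n|^2$ pairs $(x,x')$ and the $\ell$ waypoints $j$ would produce $\mathbf P[\mathcal R^c]\leq \ell|B_n|^2\exp(-c\delta^3 n/m)$.

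It then remains to show that on $\mathcal R$, both endpoints $u$ and $v$ lie in the unique $(p+\delta)$-giant cluster of $B_n(\gamma)$ with probability at least $\delta^2$. Since $M\geq n$, there is a path of length at most $M$ from $u$ to a vertex of $\partial B_n(\gamma_0)$ that stays in $B_n(\gamma_0)\subset B_n(\gamma)$, so the assumption provides $\mathrm P_p[u\lr[p]{B_n(\gamma_0)}\partial B_n(\gamma_0)]\geq \delta$, and symmetrically for $v$. On $\mathcal R$ each such $p$-cluster is absorbed into the $(p+\delta)$-giant cluster of its ball, and the giant clusters of the successive balls $B_n(\gamma_j)$ are $(p+\delta)$-connected along the overlapping chain of balls covering $\gamma$. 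For $\ell\geq 2n$ the two endpoint connection events depend on disjoint sets of edges and are therefore independent, yielding the $\delta^2$ factor; for $\ell<2n$ one applies the assumption directly to $\gamma$.

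The main obstacle is the sprinkled orange-peeling step inside a single annulus: one has to show that if two distinct $p$-clusters both cross $A(k,m)$, then the $\tfrac{\delta}{1-p}$-sprinkle opens a path between them inside the annulus with conditional probability $\gtrsim \delta^2$. This is where the hypothesis $\kappa_p(M,m)\geq \delta$ is genuinely used and where the cube of $\delta$ in the exponent of the error term appears; the remaining arguments reduce to combinatorial bookkeeping supported by the geometric lemmas of Section~\ref{sec:geom-lemmas}.
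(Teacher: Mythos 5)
Your argument follows the paper's broad strategy (orange-peeling inside the annuli $A(k,m)$ with sprinkling, followed by chaining along the waypoints $\gamma_j$), but the chaining step has a genuine gap: your event $\mathcal R$ records only the \emph{uniqueness} of crossing clusters in each ball $B_n(\gamma_j)$, never their \emph{existence}, and uniqueness alone cannot propagate the connection from $u$ to $v$.

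Concretely, let $C$ be the $p$-cluster of $u$. Your endpoint event gets $C$ out to $\partial B_n(\gamma_0)$, but nothing pushes $C$ out to $\partial B_n(\gamma_1)$ (indeed $\partial B_n(\gamma_0)\subset B_{n+1}(\gamma_1)$), and, more importantly, nothing provides a \emph{second} cluster in $B_n(\gamma_1)$ for $C$ to merge with: the ``giant cluster of $B_n(\gamma_1)$'' you assert carries the chain forward may simply not exist, in which case $\mathcal R$ holds vacuously at $\gamma_1$ and no new connection is produced. The paper's proof therefore imposes a third family of events, $B_{n/10}(\gamma_j)\lr[p]{}\partial B_n(\gamma_j)$ for every intermediate $j$; these are the relay clusters that the uniqueness events actually glue together. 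Their failure probability is controlled via Lemma~\ref{lem:7} (many disjoint corridors, Equation~\eqref{eq:12}) and contributes an extra $\ell\,e^{-c\delta n/m}$ to the union bound, which the error term in \eqref{eq:10} absorbs. Two secondary points. First, the uniqueness annulus should have inner radius of order $n$ (the paper takes $(n/5,n/2)$) rather than $m$, so that the relay cluster at $\gamma_j$ --- which only reaches inward to $B_{n/10}(\gamma_j)$ --- provably crosses the uniqueness annulus of $\gamma_{j\pm1}$. Second, the conditional merger probability inside one annulus is of order $\delta^3$, not $\delta^2$: one $\delta$ for the $p$-open corridor crossing and one $\delta$ each for a sprinkled boundary edge to leave the first cluster and to enter the second; this is what genuinely produces the cube of $\delta$ in the exponent.
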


	\begin{proof}
		For simplicity, we will proceed as if all the ratios ($n/m$, $n/10$, $c_i n/m$...) appearing in the proof were integer-valued. The more general statement can easily be obtained by appropriate ``$\lceil\cdot\rceil$-and-$\lfloor\cdot\rfloor$-management''. Let $c_0>0$ be a constant as in Lemma~\ref{lem:7}.  
		
		By possibly reducing the value of the constant $c$, one may assume without loss of generality that the ratio $n/m$ is large. In particular, we may  assume that
		\begin{equation}
			\label{eq:11}
			\frac 3 {c_0}\: m\le \frac {c_0}{10}\:  n\quad\text{and}\quad \frac{11}{10c_0}\: n\le m\: \frac{|B_{n}|}{|B_m|}.
		\end{equation}
		
		In this proof, we work with the coupled configurations $(\omega_p)_{0\le p\le1}$ under the measure~$\mathbf P$. 
		First, we show that the classical annulus $B_n\setminus B_{n/10}$ is crossed by at least one $p$-cluster with high probability. To this end, apply Lemma~\ref{lem:7} with $a=\frac 3{c_0} m$.  It guarantees that  there exist  $c_1 n/m$ disjoint corridors of thickness $m$ connecting $B_{n/10}$ to $\partial B_{n}$ and of length at most $\frac{11}{10c_0}\:n$. Each one of these corridors is crossed with probability at least $\delta$, by the assumption \eqref{eq:9}. By independence, we have
		\begin{equation}
			\label{eq:12}
			\mathbf P[B_{n/10} \nlr[p]{}\partial B_{n}]\le \left(1-\delta \right)^{c_1n/m}\leq \exp\left(-\frac{\delta c_1 n} m\right). 
		\end{equation}
		
		By automorphism-invariance, any classical annulus $B_n(x)\setminus B_{n/10}(x)$ is crossed by at least one $p$-cluster with high probability. The main idea to prove \eqref{eq:10} is to consider a chain of $p$-clusters and  show that they all get connected at $p+\delta$, with high probability (as illustrated in Fig.~\ref{fig:1}). To this end,  we use an adapted version of the orange-peeling argument, and show that all the $p$-clusters in $B_{n/2}$ crossing from $B_{n/5}$ to $\partial B_{n/2}$ are locally $(p+\delta)$-connected to each other with high probability. More precisely, we prove that
		\begin{equation}
			\label{eq:13}
			\mathbf P [U_{p,p+\delta}(n/5,n/2)]\geq 1-|B_{n}|^2\exp\left(-\frac{\delta^3 n}{140m}\right).
		\end{equation}
		To achieve this, for $0\le i<\frac{n}{140m}$, consider the connected annuli $A_i=A(n_i,3m)$, where $n_i=\frac n5 +4m+7im$. Notice that $\frac n5+4m\le n_i\le \frac n4-3m$ and  the annuli $A_i$ are disjoint subsets of $B_{n/4}\setminus B_{n/5}$. The choice of $3m$ for the thickness of the annulus ensures that for any vertices $x,\,y$  in $S^\infty_{n_i}$, one can find a corridor from $x$ to $y$ of thickness $m$ and length smaller than ${3}m|B_{3n_i}|/|B_m|$ that fully lies inside $A_i$, by Lemma~\ref{lem:6}.   Let us define $\mathcal C_p(x)$ to be the $p$-cluster of $x$ in $B_{n/2}$. For $x,y\in B_{n/5}$, we have
		\begin{align}
			\label{eq:14}
			&P[x\lr[p]{}\partial B_{n/2},~y\lr[p]{}\partial B_{n/2},~x\nlr[p+\delta]{B_{n/2}}y]\\
			&\qquad= \sum_{C,C'} \mathbf P[C\nlr[p+\delta]{B_{n/2}} C'\ |\ \mathcal C_p(x)=C,~\mathcal C_p(y)=C']\cdot \mathbf P [\mathcal C_p(x)=C,~\mathcal C_p(y)=C']
		\end{align}
		where the sum is over the pairs of disjoint clusters $(C,C')$ joining respectively $x$ and $y$ to $\partial B_{n/2}$. Let us fix such $C$ and $C'$. Notice that under the conditional law 
		\begin{equation}
			\widetilde{\mathrm P}=\mathbf P[\ \cdot\ |\ \mathcal C_p(x)= C,~\mathcal C_p(y)=C'],
		\end{equation}
		the configuration $\omega_{p+\delta}$ is an independent percolation process with marginals satisfying
		\begin{equation}
			\label{eq:15}
			\widetilde{\mathrm P}[\omega_{p+\delta}(e)=1]
			\begin{cases}
				= 1 &\text{ if } e\in C\cup C',\\
				\geq  \delta &\text{ if } e\in \partial C \cup \partial C',\\
				=  p+\delta &\text{ otherwise.}
			\end{cases}
		\end{equation}
		By Lemma~\ref{lem:5},  both $C$ and $C'$ must intersect all the exposed spheres $S^\infty_{n_i}$ (since $n_i\le n/4$). Furthermore, for every $i$,  Lemma~\ref{lem:6} ensures that there exists a corridor included in $A_i$  from  a point of $C$ to  a point of $C'$, of thickness $m$ and of length smaller than ${3}m|B_{3n_i}|/|B_m|$. Using that $3n_i\le n$ and  the bound \eqref{eq:9}, we  obtain
		\begin{equation}
			\mathbf P[\partial C\lr[p]{A_i} \partial C'\ |\ \mathcal C_p(x)=C,~\mathcal C_p(y)=C']\geq \kappa_p\left({3}m{|B_{n}|}/{|B_m|},m \right)\geq \delta.
		\end{equation}
		Above, we do not directly get that the two clusters $C$ and $C'$ are connected by a $(p+\delta)$-open path, because we are conditioning on an event where the edges at the boundary of $C$ and $C'$ are $p$-closed. Nevertheless, using that each such edge is $(p+\delta)$-open with probability larger than $\delta$, independently of the event $\partial C\lr[p]{A_i} \partial C'$, one obtains the following lower bound:
		\begin{equation}
			\mathbf P[ C\lr[p+\delta]{A_i} C'\ |\ \mathcal C_p(x)=C,~\mathcal C_p(y)=C']\geq \delta^3,
		\end{equation}
		where one additional $\delta$ appears in order to leave $C$ and another $\delta$ to reach $C'$.
		Finally, since all $A_i$'s are disjoint, one has, by independence:
		\begin{equation}
			\mathbf P[C\nlr[p+\delta]{B_{n/2}} C'\ |\ \mathcal C_p(x)=C,~\mathcal C_p(y)=C']\leq (1-\delta^3)^{n/140m}.
		\end{equation}
		Plugging this into \eqref{eq:14} and then summing over all pairs $x,y\in B_{n/5}$, one gets
		\begin{equation}
			\mathbf P [U_{p,p+\delta}(n/5,n/2)]\geq 1-|B_{n}|^2\exp\left(-\frac{\delta^3 n}{140m}\right).
		\end{equation}
		
		Now, we use the two estimates \eqref{eq:12} and \eqref{eq:13} in order to prove that \eqref{eq:10} holds. Let  $\gamma=(v_0,v_1,\ldots,v_k)$ be a path of length $k\le\ell$ connecting a vertex  $x=v_0$ to  some vertex $y=v_k$. Let $K$ be the corridor around $\gamma$ of thickness $n$. Consider the following three conditions:
		\begin{enumerate}[label=(\roman*)]
			\item\label{item:1} $x\lr{}\partial B_{n}(x)$ and $y\lr{}\partial B_{n}(y)$,
			\item\label{item:2} for all $j=1,\ldots,k$, one has $B_{n/10}(v_j)\lr[p]{}\partial B_{n}(v_j)$,
			\item\label{item:3} for all $j=1,\ldots,k$, the uniqueness event $U_{p,p+\delta}(n/5,n/2)$ centred at $v_j$ occurs. 
		\end{enumerate}
		\begin{figure}[htbp]
			\centering
			\includegraphics[scale=0.28]{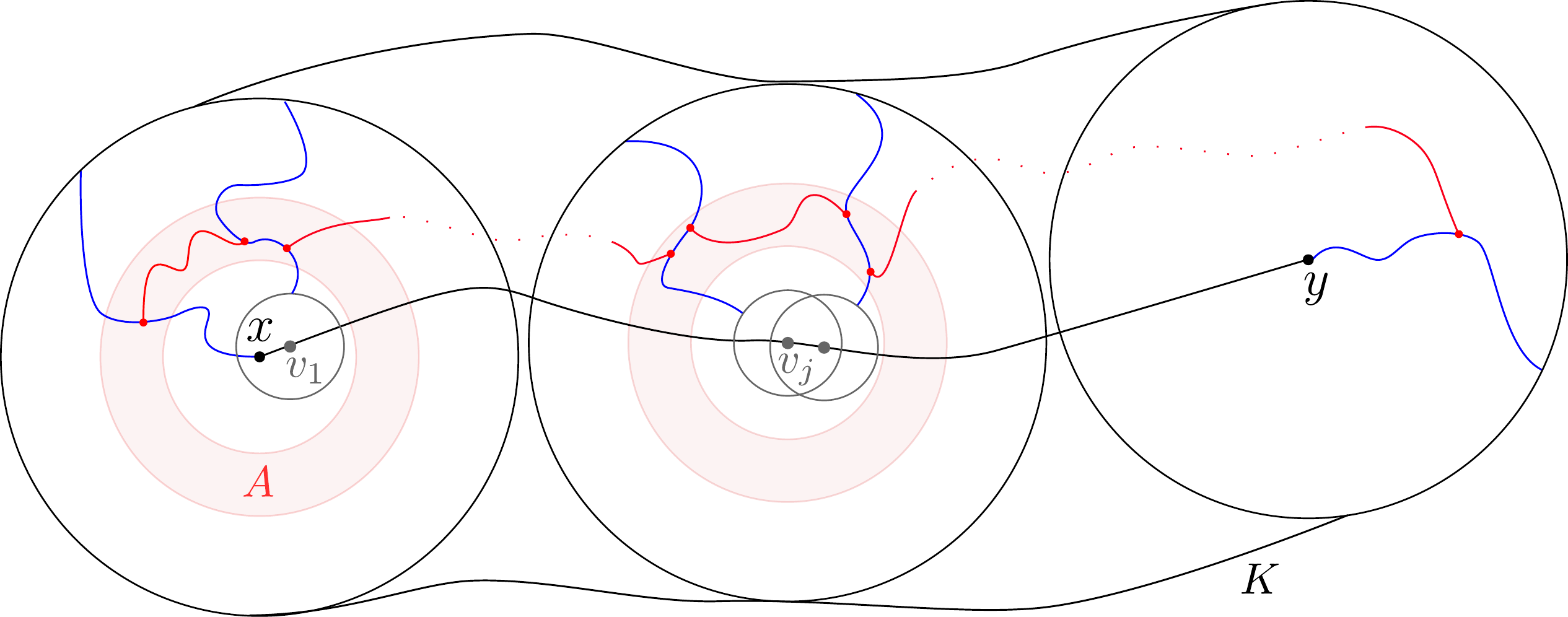}
			\caption{Gluing paths  via local uniqueness.}\label{fig:1}
		\end{figure}%
		As illustrated in Fig.~\ref{fig:1},  the simultaneous occurrence of the three events above implies that there exists a $(p+\delta)$-open path in $K$ between $x$ and $y$. The bounds $\mathbf P[z\lr{}\partial B_{n}(z)]\ge \kappa_p(n,m)$ for $z=x,y$ together with the  Harris--FKG Inequality imply that \ref{item:1} occurs with probability at least $\kappa_p(n,m)^2$. Therefore, by the union bound  and by automorphism-invariance, we obtain
		\begin{align}
			\mathbf P[x\lr[p+\delta]{K}y]&\!\!\!\!\overset{\phantom{\eqref{eq:12},\eqref{eq:13}}}{\geq} \!\kappa_p(n,m)^2 -k \mathbf P[ B_{n/10}\nlr[p]{}\partial B_{n}] -k(\mathbf P[U_{p,p+\delta}(n/5,n/2)^c])\\
			&\!\!\!\!\overset{\eqref{eq:12},\eqref{eq:13}}{\geq}\! \delta^2 - \ell \exp\left( -\frac{\delta c_1n}m\right) -\ell|B_{n}|^2\exp\left(-\frac{\delta^3 n}{140m}\right),  
		\end{align}
		which concludes the proof of \eqref{eq:10}.
	\end{proof}
	
	\begin{lemma}[Renormalisation property of the corridor function]
		\label{lem:9}
		Let $p>0$ and assume that 
		\begin{equation}
			\label{eq:16}
			\limsup_{n\to\infty} \kappa_p(n\log^{3d}n,n)>0.
		\end{equation}
		Then for all $q>p$ and $C>2d$,  for all $n$ large enough, one has 
		\begin{equation}
			\label{eq:17}
			\kappa_{q}(n^C,n) \geq n^{-1/C}.
		\end{equation}
	\end{lemma}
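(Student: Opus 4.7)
The approach iterates Lemma~\ref{lem:8} with a geometrically decaying sprinkle budget, upgrading the $\limsup$ hypothesis---which only controls a sparse sequence of scales---into a ``for all $n$ large enough'' statement. Extract from~\eqref{eq:16} some $\delta_0>0$ and an infinite sequence $n_k\to\infty$ with $\kappa_p(n_k\log^{3d} n_k,n_k)\ge\delta_0$. Given $q>p$ and $C>2d$, set $\delta_i:=\min(\delta_0,q-p)\cdot 2^{-i-1}$, so that $\sum_{i\ge 1}\delta_i\le q-p$.

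\paragraph{Single step.} For each $k$ we apply Lemma~\ref{lem:8} at $(p,\delta_1)$, with input scale $m=n_k$ and output scale $n=c_0\, n_k\log^3 n_k$, where $c_0$ is chosen using polynomial growth so that $3 n_k|B_n|/|B_{n_k}|\le n_k\log^{3d} n_k$. The conclusion reads
\begin{equation*}
\kappa_{p+\delta_1}(\ell,n) \ge \delta_1^2 - \ell\,|B_n|^2\exp\!\bigl(-c\delta_1^3\log^3 n_k\bigr) \ge \delta_1^2/2,
\end{equation*}
provided $\ell\le L_1(n_k):=\exp(c'\log^3 n_k)$, the polynomial factor $|B_n|^2\le Cn_k^{2d}\log^{6d} n_k$ being absorbed into the super-polynomial exponential. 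Monotonicity of $\kappa$ in its second argument extends the bound to $\kappa_{p+\delta_1}(\ell,N)\ge\delta_1^2/2$ for every $N\ge n$ and every $\ell\le L_1(n_k)$.

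\paragraph{Iteration and covering.} Step $i$ takes the conclusion of step $i-1$ as hypothesis, at parameter $p+\sum_{j<i}\delta_j$, using an input scale $m$ in the previous good range with $\ell\le L_{i-1}(m)$. The admissibility condition $3m|B_n|/|B_m|\le L_{i-1}(m)$ allows an output scale $n\lesssim m^{1-1/d}L_{i-1}(m)^{1/d}$, and the new admissible length is $L_i(n)\asymp\exp(c\delta_i^3\, n/m)$. Because the exponential $\exp(-c\delta^3 n/m)$ in Lemma~\ref{lem:8} dominates the polynomial $|B_n|^2\le Cn^{2d}$ whenever $n/m\gg\log n$, each iteration boosts the reachable range by another exponential layer, and after $i$ iterations starting from $n_k$ one reaches scales up to roughly $\exp\bigl(c_i\log^{3^i} n_k\bigr)$.

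\paragraph{Conclusion and main obstacle.} For any large $n$, pick the largest $n_k\le n$ in the extracted sequence and the smallest iteration count $i(n)$ such that $n$ lies in the $i(n)$-th reachable range; a direct estimate yields $i(n)=O(\log\log\log n)$. Then
\begin{equation*}
\kappa_q(n^C,n) \ge \kappa_{p+\sum_{j\le i(n)}\delta_j}(n^C,n) \ge \delta_{i(n)}^2/2 \asymp 2^{-2i(n)},
\end{equation*}
which dominates $n^{-1/C}$ since $i(n)=o(\log n)$. The main obstacle is the potential sparsity of the $\limsup$ sequence: a single invocation of Lemma~\ref{lem:8} cannot bridge arbitrarily large gaps between consecutive $n_k$. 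Iteration resolves this by exploiting the super-polynomial factor $\exp(-c\delta^3 n/m)$ to multiply the reachable range by one exponential per step, while the geometric sprinkle schedule keeps $\sum\delta_i\le q-p$ and the quality $\alpha_i=\delta_i^2/2$ comfortably above the target $n^{-1/C}$.
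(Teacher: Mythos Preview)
There is a genuine gap in your iteration. In Lemma~\ref{lem:8}, the sprinkle $\delta$ and the hypothesis $\kappa_p(\cdots)\ge\delta$ use the \emph{same} parameter, so to run step $i+1$ with sprinkle $\delta_{i+1}$ you need your previous quality $\alpha_i$ to dominate $\delta_{i+1}$. You set $\delta_i=a\cdot 2^{-i-1}$ with $a\le 1$ and compute $\alpha_i=\delta_i^2/2=a^2 2^{-2i-3}$; the required inequality $\alpha_i\ge\delta_{i+1}$ reads $a^22^{-2i-3}\ge a\,2^{-i-2}$, i.e.\ $a\ge 2^{i+1}$, which fails already for $i=0$. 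So the chain breaks at the second step. Switching to ``sprinkle $=$ current quality'' forces $\alpha_i\approx\alpha_{i-1}^2/2$, a doubly exponential decay; this is in principle still above $n^{-1/C}$ if $i(n)$ grows like an iterated logarithm, but then the error term $\ell|B_n|^2\exp(-c\alpha_{i-1}^3 n/m)$ demands $n/m\gtrsim\alpha_{i-1}^{-3}$, and you have not tracked how this interacts with the admissibility constraint $3m|B_n|/|B_m|\le L_{i-1}$ across steps.

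The paper sidesteps this by \emph{coupling the sprinkle schedule to the scale rather than the step index}. A single sprinkle of size $\eta=(q-p)/2$ is spent up front (Lemma~\ref{lem:8} with $m=n/\log^2 n$) to upgrade the hypothesis to $\limsup_n\kappa_{p+\eta}(n^{C^2},n)>0$, i.e.\ corridors of \emph{polynomial} length in the thickness. From there one fixes a base scale $m_0$ with $\kappa_{p+\eta}(m_0^{C^2},m_0)\ge m_0^{-1/C}$, sets $m_i=m_0^{C^i}$, and at step $i$ sprinkles by exactly $1/m_{i-1}$, which equals the current quality bound. The induction maintains $\kappa_{p_i}(m_{i+2},m_i)\ge 1/m_{i-1}$; squaring under Lemma~\ref{lem:8} yields $1/m_{i-1}^2=m_i^{-2/C}\ge 1/m_i$ since $C>2$, so the quality stays at $(\text{current scale})^{-1/C}$ throughout. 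The total sprinkle $\sum 1/m_{i-1}$ is geometric in the \emph{scales} and hence bounded by $\eta$, and general $n$ is handled by interpolation between consecutive $m_j$. The scale-coupled schedule is precisely what your fixed geometric schedule is missing.
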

	
	\begin{remark}
		Assuming some nice behaviour at infinitely many scales, Lemma~\ref{lem:9} enables us to get some control valid at \emph{every} scale large enough.
	\end{remark}
	
	\begin{proof}
		Let us fix $p>0$ such that \eqref{eq:16} holds. Let $\eta\in(0, \frac{1-p}2)$ and $q=p+2\eta$, let  $C>2d$.	The assumptions imply that 
		\begin{equation}
			\label{eq:18}          
			\delta:=\min\big\{\eta,\,\tfrac12\limsup_{n\to\infty}\kappa_p(n\log^{3d-3}n,n\log^{-2} n)\big\}>0.
		\end{equation}
		Consider a large $n$ satisfying $\kappa_p(n\log^{3d-3}n,n\log^{-2}n)\ge\delta$ and write $m=\frac{n}{\log^2 n}$.   As $|B_n|\asymp n^d$, we have ${3}\frac{|B_{n}|}{|B_m|}\le \log^{2d+1}n$ and therefore $\kappa_{p}({3}m\frac{|B_{n}|}{|B_m|},m) \ge \delta$. Applying Lemma~\ref{lem:8} to  $n,m$ and $\ell=n^{C^2}$, we obtain
		\begin{equation}
			\label{eq:19}
			\kappa_{p+\eta}(n^{C^2},n) \ge \kappa_{p+\delta}(n^{C^2},n)\ge \delta^2-n^{C^2}|B_n|^2\exp\big(-c\delta^3\log^2 n \big).  
		\end{equation}
		Since one can find arbitrarily large $n$ satisfying the equation above, we get 
		\begin{equation}
			\label{eq:1729}
			\displaystyle\limsup_{n\to\infty}\kappa_{p+\eta}(n^{C^2},n)>0.
		\end{equation}
		To get \eqref{eq:17}, we perform a renormalisation argument. Let $c_0$ be a constant as in Lemma~\ref{lem:8}. Equation~\eqref{eq:1729} allows us to take  $m_0\geq\max\{\frac{1+\eta}{\eta},R\}$  such that   
		\begin{equation}
			\label{eq:20}
			\kappa_{p+\eta}(m_0^{C^2},m_0)\ge\frac{1}{m_0^{1/C}}
		\end{equation}
		and, for every $m\ge m_0$,
		\begin{equation}
			\label{eq:21}
			m^{C^3}|B_{m^2}|^2e^{-c_0 m^{1-3/C}}\le \frac{1}{m^{2/C}}-\frac{1}{m}\qquad \text{and}\qquad  {3} m\frac{|B_{m^2}|}{|B_{m}|}\le m^{C^2}.
		\end{equation}
		Let $m_i=m_0^{C^i}$ and $p_{i+1}=p_{i}+1/m_{i-1}$, with $p_0=p+\eta$. We will prove by induction that for every $i\ge 0$,
		\begin{equation}
			\label{eq:22}
			\kappa_{p_i}(m_{i+2},m_i)\ge \frac{1}{m_{i-1}}.
		\end{equation}
		Notice that the relation holds for $i=0$, by definition of $m_0$.
		Let us fix some $i$ such that \eqref{eq:22} holds.  Since ${3} m_i\frac{|B_{m_i^2}|}{|B_{m_i}|}\le m_i^{C^2}=m_{i+2}$,  one has
		\begin{equation}
			\kappa_{p_i}\big({3} m_i{|B_{m_i^2}|}/{|B_{m_i}|},m_i\big)\ge\frac{1}{m_{i-1}}.
		\end{equation}
		Using Lemma~\ref{lem:8} and observing that $m_{i+1}\geq m_i^2$, we get
		\begin{align}
			\kappa_{p_{i+1}}(m_{i+3},m_{i+1})\geq\kappa_{p_{i+1}}(m_{i+3},m_i^2)&\geq\frac{1}{m_{i-1}^2}-m_{i+3}|B_{m_i^2}|^2\exp\left(-\frac{c_0m_i^2}{m_{i-1}^3 m_i}\right)\\
			&\geq \frac{1}{m_i},
		\end{align}
		where the last inequality holds due to the choice of $m_0$. This completes the induction.
		
		Now, for $n$ large, consider $j$ such that $m_j\leq n<m_{j+1}$. Using that $p_j\le p+2\eta$ and monotonicity, we get
		\begin{equation}	
			\kappa_{p+2\eta}(n^C,n)\geq\kappa_{p_j}(m_{j+2},m_j)\geq \frac{1}{m_j^{1/C}}\geq\frac{1}{n^{1/C}}.
		\end{equation}
	\end{proof}
	
	\begin{proof}[Proof of Proposition~\ref{prop:2}]
		As mentioned before, the strategy of the proof is an adaptation of the orange-peeling argument used in \cite[Lemma 7.89]{MR1707339}. Fix $p\in(0,1)$ and $\eta\in(0,(1-p)/2)$. We will prove that the statement \eqref{eq:7} holds for $q=p+2\eta$. Let $C=8d$. By Lemma~\ref{lem:9}, we can fix  $m_0\ge1$ such that for all $m\geq m_0$, we have $\kappa_{q}(m^C,m)\geq m^{-1/C}$. Let~$n$ be large enough, so  that it is possible to choose an $m$ satisfying
		\begin{equation}
			\label{eq:23}
			m_0 \leq m\leq n^{1/9}\text { and } m^C\geq {3}m|B_{3n}|/|B_m|.
		\end{equation}
		As in the proof of Lemma~\ref{lem:8}, we consider disjoint annuli inside $B_{n/4}\setminus B_{n/5}$: for $0\le i<\frac{n}{140m}$, consider the connected annuli $A_i=A(n_i,3m)$, where $n_i=\frac n5 +4m+7im$. The gist of the proof is to establish, for all $i<\frac n{140m}$ and all $x,y\in A_i$, a good lower bound for $\mathrm P_q[x\lr{A_i} y]$. Let $i<\frac1{140m}$ and $x,y\in A_i$. Denote by $x'$ a vertex of $S^\infty_{n_i}$ such that $d(x,x')=d(x,S^\infty_{n_i})$. Define $y'$ in the same manner, with respect to $y$ instead of $x$. By Lemma~\ref{lem:6}, one can find a corridor from $x'$ to $y'$ of thickness $m$ and of length smaller than $3m|B_{3n_i}|/|B_m|$ that fully lies within $A_i$. Using the Harris--FKG Inequality, we get
		\begin{equation}
			\label{eq:24}
			\mathrm P_{q}[x\lr{A_i}y]\geq \mathrm P_{q}[x\lr{B_{3m}(x')}x']\cdot\kappa_{q}(3m |B_{3n}|/|B_{m}|,m)\cdot \mathrm P_{q}[y\lr{B_{3m}(y')}y'].
		\end{equation}
		Then, we only need to find a good lower bound for each factor of this product. Observe that \eqref{eq:23} and Lemma~\ref{lem:9} directly yield the following lower bound for the middle factor:
		\begin{equation}
			\label{eq:25}
			\kappa_{q}({3}m|B_{3n}|/|B_{m}|,m)\geq m^{-1/C}.
		\end{equation}
		For the other factors of \eqref{eq:24}, by automorphism-invariance, it is enough to get a lower bound for $\mathrm P_q[o\lr{B_{3m}} z]$, where $z\in B_{3m}$. In order to get this bound, we will use a chaining argument, which goes as follows. For all $k$, set $m_k=m_0^{2^k}$. Let $j\geq 0$ be such that 
		\begin{equation}
			m_0+\cdots+m_{j-1}\leq d(o,z) < m_0+\cdots+m_j.
		\end{equation}
		\begin{figure}
			\centering
			\includegraphics[scale=0.48]{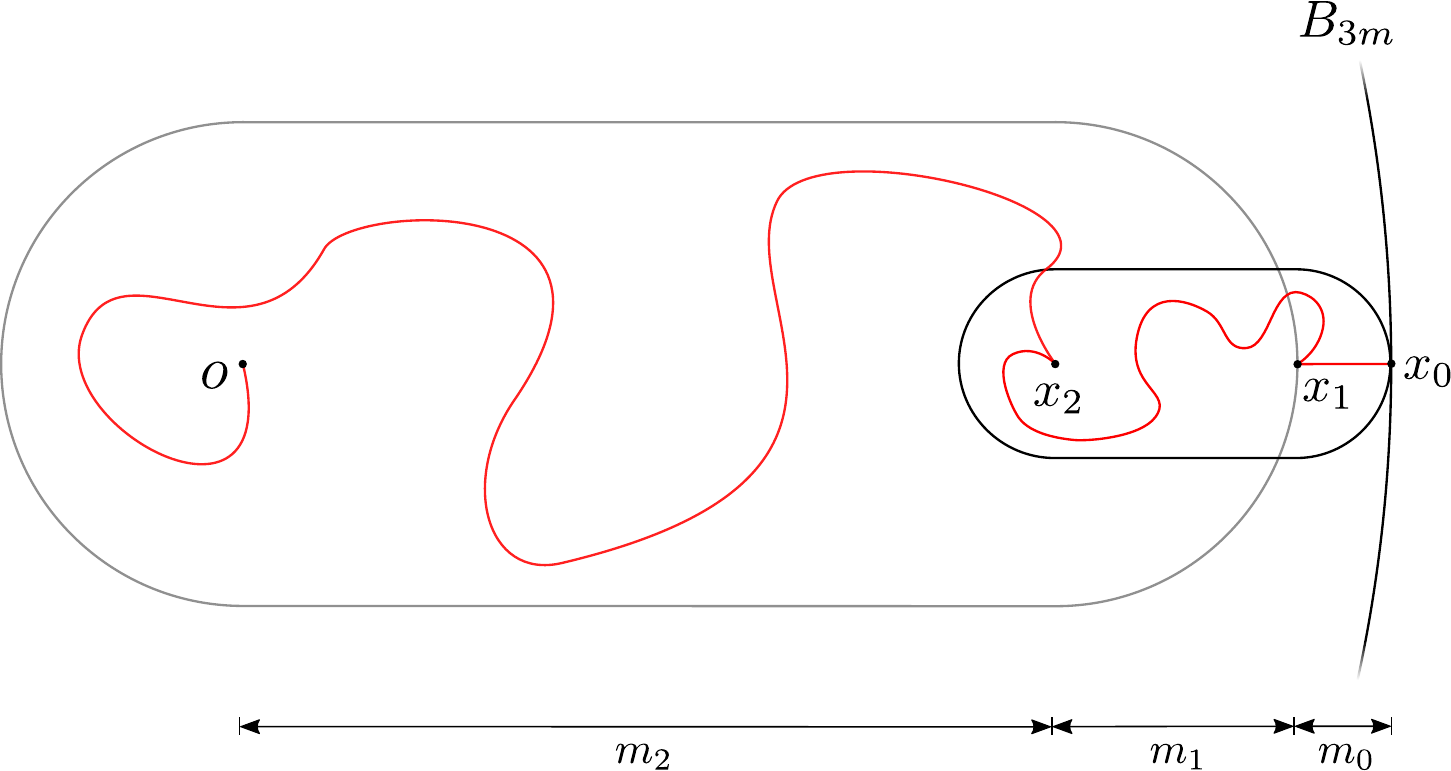}
			\label{fig:2}
			\caption{Illustration of the chaining argument: a chain of corridors is used to  connect the  centre of the ball to a boundary vertex $x_0$.  For the corridors to be subsets of the ball, the thickness  is chosen to be smaller when approaching the boundary.}
		\end{figure}% 
		Let $\gamma$ be a geodesic path between $z$ and $o$. Fix a sequence of points $x_0=z, x_1,\ldots, x_{j+1}=o$ in $\gamma$ such that for all $0\leq i<j$, $d(x_i,x_{i+1})=m_i$ and $d(x_j,o)< m_j$. Notice that for every $1\leq i < j$, the corridor around $\gamma$ of thickness $m_{i-1}$ between $x_i$ and $x_{i+1}$ is contained in the ball $B_{3m}$. This is not the case for $i=0$,  but in this case we use the trivial bound $\mathrm P_q[x_0\lr{B_{3m}}x_1]\geq p^{m_0}$. Then, using the Harris--FKG Inequality, we get
		\begin{equation}
			\label{eq:26}
			\mathrm P_{q}[o\lr{B_{3m}} z]\geq p^{m_0}\prod_{i=1}^{j}\kappa_{q}(m_i,m_{i-1}).
		\end{equation}
		Notice that $\prod_{i=1}^{j}\kappa_{q}(m_i,m_{i-1})\ge\prod_{i=1}^{j}m_0^{-2^{i-1}/C} = m_0^{\frac{1-2^j}{C}}\ge \tfrac{1}{m_{j-1}}$, where the last inequality uses $C\ge2$. Combining this inequality with \eqref{eq:26} and $m_{j-1}\le d(o,z)\le 3m$ yields
		\begin{equation}
			\label{eq:299}
			\mathrm P_{q}[o\lr{B_{3m}} z]\geq \frac{p^{m_0}}{3m}.
		\end{equation}
		Using \eqref{eq:24}, \eqref{eq:25} and \eqref{eq:299}, we obtain that for all $m$ large enough,
		\begin{equation}
			\label{eq:27}
			\mathrm P_{q}[x\lr{A_i}y]\geq \frac{p^{2m_0}}{(3m)^2}\frac{1}{m^{1/C}}\geq \frac{1}{m^3}.
		\end{equation}
		
		Now, we describe the adaptation of the orange-peeling argument presented in \cite[Lemma 7.89]{MR1707339} to our context. Let $a$ and $b$ denote two distinct vertices of $B_{n/10}$. We explore their clusters step by step, starting from the inside ball $B_{n/10}$. The $i^\text{th}$ step explores these clusters until they touch the annulus $A_i$ --- or until we are sure they will never do so. Denote by $a_i$ and $b_i$ the respective points where they touch $A_i$ for the first time.
		Notice that the information revealed up to the end of the $i^\text{th}$ step does not reveal the status of any edge in $A_i$. By independence, \eqref{eq:27} tells us that, conditionally on the information revealed up to the end of the $i^\text{th}$ step, and provided $a_i$ and $b_i$ are well-defined, then the probability to have $a_i\lr{A_i}b_i$ is at least $\tfrac{1}{m^3}$. We deduce that, for every $n$ large enough,
		\begin{equation}
			\mathrm P_q[U(n/5,n/2)]\geq 1-|B_{n/10}|^2\left(1-\frac{1}{m^{3}}\right)^{n/140m}\geq 1-\frac{e^{-\sqrt{n}}}{2},
		\end{equation}
		where we use that $m\le n^{1/9}$.
		\begin{figure}
			\centering
			\includegraphics[scale=0.23]{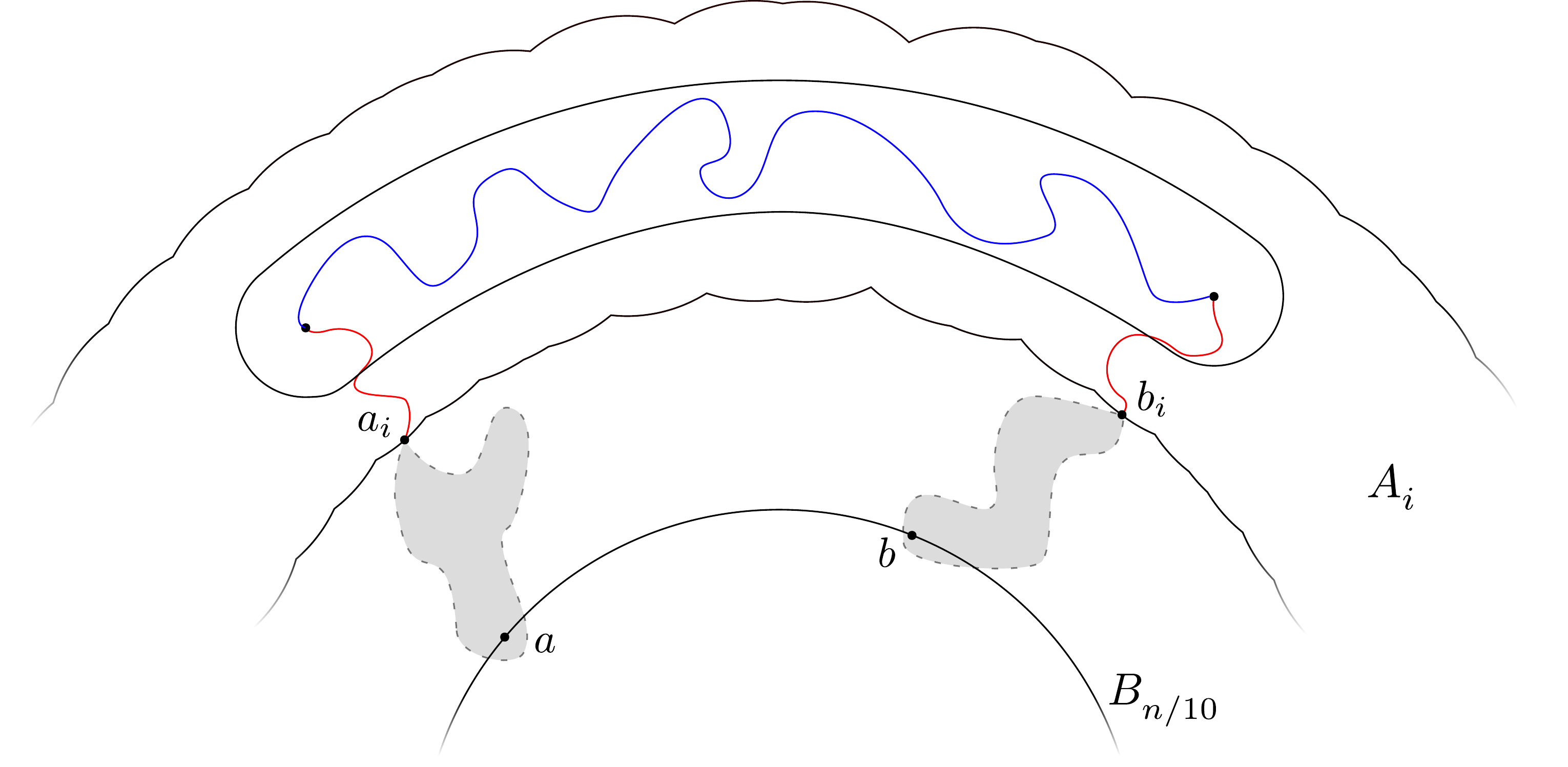}
			
			\caption{Orange-peeling argument: when the explorations of two clusters from $B_{n/10}$ reach the annulus $A_i$, they get connected  inside $A_i$ with probability at least $1/m^3$.}	\label{fig:3}
		\end{figure}% 
		It remains to prove that for every $n$ large enough, we have
		\begin{equation}
			\mathrm P_q[B_{n/10}\lr{} \partial B_n]\geq 1-\frac{e^{-\sqrt{n}}}{2},
		\end{equation}
		which can be proved as Equation~\eqref{eq:12} by using that there exist  at least $c n^{1-{1/C}}$ disjoint corridors of thickness $n^{1/C}$ from $B_{n/10}$ to $ \partial B_n$, and that each of these corridors is independently crossed with probability at least $n^{-1/C}$.    
	\end{proof}
	
	\section{Probability that two clusters meet at one point}
	\label{sec:quantitative-uniqueness}
	
	This section is devoted to the proof of the following proposition, which extends the  quantitative uniqueness argument of \cite{MR901151,gandolfi1988uniqueness} to graphs of polynomial growth.  The proof  presented below follows the more recent presentations of \cite{cerf2015} and \cite{2018arXiv180808940H}. Recall that $\piv m n$ is defined in \eqref{eq:2}, on page~\pageref{eq:2}.
	
	\begin{proposition}\label{prop:3}
		Let $G$ be a transitive graph of polynomial growth. Let $\varepsilon>0$ and $\eta>0$. There is a constant $c=c(G,\varepsilon,\eta)$ such that for all $p\in [\eta,1]$ and all $n\ge 1$, we have
		\begin{equation}\label{eq:29}
			\mathrm P_p[\piv 1 n] \le cn^{-1/2+\varepsilon}.
		\end{equation}
	\end{proposition}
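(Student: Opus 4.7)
The plan is to adapt the Aizenman--Kesten--Newman exploration argument, as reformulated by Cerf~\cite{cerf2015} and Hermon--Hutchcroft~\cite{2018arXiv180808940H}, to the polynomial-growth setting. Since the target bound is polynomial (not exponential) and uniform in $p\in[\eta,1]$, a Gaussian-type concentration obtained via a martingale argument (rather than a Peierls-type argument) is the natural route.

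By transitivity and a union bound over ordered pairs in $B_1\times B_1$, it suffices to prove that for each fixed pair of distinct vertices $x,y\in B_1$,
\begin{equation*}
\mathrm P_p[x\lr{B_n}\partial B_n,\ y\lr{B_n}\partial B_n,\ x\nlr{B_n}y]\,\leq\,c'n^{-1/2+\varepsilon},
\end{equation*}
for some constant $c'=c'(G,\varepsilon,\eta)$. The next step is to design an exploration algorithm that reveals the edges of $B_n$ in a prescribed, history-dependent order, simultaneously growing the cluster of $x$ and (inside its complement) the cluster of $y$. At each step the algorithm queries a single edge whose status is, conditionally on the past, an independent Bernoulli$(p)$. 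Writing $e_1,e_2,\dots$ for the queried edges and setting $M_t=\sum_{i\leq t}(\mathbf 1_{\{e_i\text{ open}\}}-p)$, Azuma--Hoeffding yields $\mathrm P[|M_t|\geq \lambda\sqrt t\,]\leq 2e^{-\lambda^2/2}$ for every $\lambda,t\geq 0$.

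The core of the argument is a geometric counting lower bound on the revealed edges. On the event of interest, both clusters must cross every dyadic annulus $B_{2^{k+1}}\setminus B_{2^k}$ for $k=0,\dots,\lfloor \log_2 n\rfloor$. Using Lemma~\ref{lem:5} (every crossing path meets $S_r^\infty$) together with the coarse connectedness of exposed spheres (Remark~\ref{rem:coarse-conn-spheres} and Lemma~\ref{lem:1}) and the diameter control from Lemma~\ref{lem:6}, the exploration is forced to inspect many \emph{potential meeting edges}, namely edges with one endpoint in the cluster of $x$ and the other in the cluster of $y$, which must all turn out to be closed. Translating this count into a lower bound on the deviation of $M_t$ from $0$, combining with the Azuma inequality above and with the polynomial volume bound $|B_n|\leq Cn^d$ from~\eqref{eq:4}, yields the desired $n^{-1/2+\varepsilon}$ decay; the factor $n^{\varepsilon}$ absorbs the logarithmic losses coming from the union bound over scales and from the dependence of the geometry on $\eta$.

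The main technical obstacle is the accurate counting of potential meeting edges without the rotational symmetries of $\mathbb Z^d$. In the hypercubic lattice one can foliate $B_n$ by concentric spheres of controlled geometry, providing an automatic lower bound on the boundary edges of any non-trivial cluster. In a general polynomial-growth graph, spheres can fail to disconnect and minimal cutsets may be highly non-spherical; the exposed-sphere and annulus viewpoint of Section~\ref{sec:geom-lemmas} is the correct replacement, and provides the bookkeeping needed to make the martingale argument go through uniformly in $p\in[\eta,1]$.
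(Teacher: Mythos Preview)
Your proposal has a genuine gap. You fix a pair $x,y\in B_1$ and attempt to lower bound the number of ``potential meeting edges'' between the specific clusters $\mathcal C_x$ and $\mathcal C_y$, invoking the exposed-sphere and annulus lemmas. But nothing in Section~\ref{sec:geom-lemmas} gives such a bound: two disjoint clusters crossing $B_n\setminus B_1$ may share arbitrarily few boundary edges (in an extreme case, only those incident to $B_1$). Lemmas~\ref{lem:5} and~\ref{lem:6} control the geometry of $G$, not the interface between two random clusters, so the step ``Translating this count into a lower bound on the deviation of $M_t$'' has no content as written.

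The paper's proof does not work cluster-by-cluster. Instead it averages over the location of the meeting point: by transitivity and finite energy, $|B_{m-1}|\cdot p\cdot\mathrm P_p[\piv{1}{4n}]\le 4\,\mathrm E_p[|\mathcal H|]$, where $\mathcal H$ is the set of \emph{all} closed edges in $B_m$ whose endpoints lie in distinct clusters touching $\partial B_m$. A double-counting identity then gives $p\,\mathrm E_p[|\mathcal H|]=\mathrm E_p\big[\sum_{\mathcal C\in\mathfrak C}h(\mathcal C)\big]$ with $h(\mathcal C)=p|\mathsf{closed}(\mathcal C)|-(1-p)|\mathsf{open}(\mathcal C)|$; the martingale/exploration argument is applied to bound $\mathrm E_p[h(\mathcal C_x)^2/|\mathcal C_x|^{1+\varepsilon}]$, and Cauchy--Schwarz together with the isoperimetric estimate $|\partial B_m|/|B_m|\le c/n$ (Lemma~\ref{lem:10}) yields the $n^{-1/2+\varepsilon}$. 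The only geometric input is Lemma~\ref{lem:10}; exposed spheres and annuli play no role here. If you want to salvage your sketch, the missing idea is precisely this averaging over all vertices of $B_m$ combined with the identity expressing $|\mathcal H|$ in terms of the $h(\mathcal C)$'s.
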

	
	The interest of Proposition~\ref{prop:3} is twofold. On the one hand, it will be useful to bound the probability  of pivotal edges when studying  the derivative of crossing probabilities: it will be important to establish the general sharp threshold result of Proposition~\ref{prop:8}.  On the other hand, the work \cite{cerf2015}  shows that for $G=\mathbb Z^d$ the bound above can be strengthened into  bounds on the probability $\mathrm P_p[\piv m n]$ for  $m\ge1$. For more general graphs, we will  obtain a  similar result in  Proposition~\ref{prop:4}, but we need to combine the approach of \cite{cerf2015} with a new  renormalisation argument in order to overcome the lack of symmetry of $G$.

	In order to prove Proposition~\ref{prop:3}, we will make use of the following geometric observation.
	
	\begin{lemma}\label{lem:10}
		Let $G$ be a transitive graph of polynomial growth. There is a constant $c$ such that the following holds: for every $n\geq 1$, there is an integer $m$ such that $n\leq m<2n$ and $|\partial B_m|/|B_{m}|\leq c/n$.
	\end{lemma}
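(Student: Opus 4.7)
The proof will be a short pigeonhole argument driven by polynomial growth, in the spirit of a standard dyadic averaging of the isoperimetric ratio.

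The key elementary input I would record first is the edge-boundary growth bound
\begin{equation}
|B_{m+1}|-|B_m|\leq |\partial B_m|, \qquad \text{equivalently}\qquad \frac{|B_{m+1}|}{|B_m|}\leq 1+\frac{|\partial B_m|}{|B_m|}.
\end{equation}
This holds because any vertex $v\in B_{m+1}\setminus B_m$ lies at distance exactly $m+1$ from $o$, so a geodesic from $o$ to $v$ visits $B_m$ at its penultimate step and produces an edge from $v$ to $B_m$ which lies in $\partial B_m$. Distinct vertices $v\in B_{m+1}\setminus B_m$ yield distinct such edges, since the endpoint of an edge of $\partial B_m$ lying outside $B_m$ is uniquely determined.

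Now I would argue by contradiction. Fix a constant $c>0$ to be chosen, and suppose that for some $n\geq 1$ one has $|\partial B_m|/|B_m|>c/n$ for every integer $m\in\{n,n+1,\dots,2n-1\}$. Telescoping the previous inequality over these $n$ consecutive scales gives
\begin{equation}
\frac{|B_{2n}|}{|B_n|}=\prod_{m=n}^{2n-1}\frac{|B_{m+1}|}{|B_m|}>\left(1+\frac{c}{n}\right)^{n}\geq 1+c,
\end{equation}
where the last step is Bernoulli's inequality, valid for every integer $n\geq 1$ (so no separate treatment of small $n$ is needed). On the other hand, the two-sided estimate \eqref{eq:4} produces a constant $C=2^d/c_0^2$, depending only on $G$ (with $c_0$ the constant of \eqref{eq:4}), such that $|B_{2n}|/|B_n|\leq C$ for all $n\geq 1$. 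Choosing $c:=C$ at the outset yields the contradiction $C\geq |B_{2n}|/|B_n|>1+C$, hence some $m\in[n,2n)$ must satisfy $|\partial B_m|/|B_m|\leq c/n$.

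There is no real obstacle here: the argument is essentially a weighted pigeonhole over the $n$ consecutive scales $m\in[n,2n)$, saying that the isoperimetric ratio cannot be large on all of them because otherwise the geometric product would blow up beyond what polynomial growth allows. The only point needing a line of justification is the edge-boundary inequality stated above, and the uniformity in $n$ coming from Bernoulli's inequality takes care of every $n\geq 1$ with a single constant $c$.
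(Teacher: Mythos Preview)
Your argument has a genuine gap: the inequality you record, $|B_{m+1}|-|B_m|\le |\partial B_m|$, points the \emph{wrong way} for the telescoping step. From $\frac{|B_{m+1}|}{|B_m|}\le 1+\frac{|\partial B_m|}{|B_m|}$ together with the contradictory hypothesis $\frac{|\partial B_m|}{|B_m|}>\frac{c}{n}$ you cannot conclude $\frac{|B_{m+1}|}{|B_m|}>1+\frac{c}{n}$; you have only bounded $\frac{|B_{m+1}|}{|B_m|}$ from above by something larger than $1+\frac{c}{n}$, which gives no lower bound at all on the product $\frac{|B_{2n}|}{|B_n|}$.

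The fix is easy and stays within your framework. Since $G$ is transitive with degree $D:=\deg(o)$, every vertex of $B_{m+1}\setminus B_m$ contributes at most $D$ edges to $\partial B_m$, so in fact $|\partial B_m|\le D\,(|B_{m+1}|-|B_m|)$, i.e.\ $\frac{|B_{m+1}|}{|B_m|}\ge 1+\frac{1}{D}\cdot\frac{|\partial B_m|}{|B_m|}$. With this reversed inequality, your telescoping and Bernoulli argument go through verbatim (with a harmless factor of $D$ absorbed into $c$). For comparison, the paper's proof avoids the issue by an additive rather than multiplicative average: it bounds $\min_m \frac{|\partial B_m|}{|B_m|}$ by $\frac{1}{n}\sum_{m=n}^{2n-1}\frac{|\partial B_m|}{|B_m|}\le \frac{1}{n}\cdot\frac{|E(B_{2n})|}{|B_n|}$, using that the $\partial B_m$ are disjoint edge sets inside $B_{2n}$. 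Both approaches ultimately rely on bounded degree and $|B_{2n}|/|B_n|\le C$.
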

	
	\begin{proof}
		Using that the sum $\sum_{m=n}^{2n-1}|\partial B_m|$  is smaller than the total number of edges in $B_{2n}$, we have
		\begin{equation}
			\label{eq:151}
			\min_{n\le m<2n} \frac{|\partial B_m|}{|B_{m}|}\le \frac1n \sum_{m=n}^{2n-1}  \frac{|\partial B_m|}{|B_{m}|}\le\frac1n\cdot \frac{|E(B_{2n})|}{|B_{n}|}.
		\end{equation}
		Since $|B_n|\asymp n^d$ and $G$ has bounded degree, $\frac{|E(B_{2n})|}{|B_{n}|}$ is bounded from above by a constant, which concludes the proof. 
	\end{proof}
	
	\begin{proof}[Proof of Proposition~\ref{prop:3}]
		Let $c_1>0$ be a constant such that the conclusion of Lemma~\ref{lem:10} holds, and let $n$ be a positive integer. Then, there is an integer $m$ such that $n\leq m<2n$ and
		\begin{equation}
			|\partial B_m|/|B_{m}|\leq c_1/n.\label{eq:30}
		\end{equation}
		Let us fix such an $m$. Following \cite{cerf2015}, let us consider the random set of edges $\mathcal H$ defined as follows: we look at the configuration restricted to $B_m$ and we say that $e$ belongs to $\mathcal H$ if it is closed and its endpoints belong to disjoint clusters in $B_m$ that both touch $\partial B_m$. Since $4n\ge 2m$, one can use the finite-energy property and automorphism-invariance to prove that for every $x\in B_{m-1}$,
		\begin{equation}
			\label{eq:31}
			p  \cdot \mathrm P_p[\piv 1 {4n}]\le  \mathrm{deg}(G) \sum_{e\ni x} \mathrm P_p[e\in \mathcal H].
		\end{equation}
		Indeed, whenever $\piv 1 {4n}$ holds, there must exist two edges $e$ and $e'$, both containing $x$, such that if we force $e'$ to be open, then $e$ belongs to $\mathcal{H}$ in this new configuration.
		Summing over all $x\in B_{m-1}$, we get 
		\begin{equation}\label{eq:32}
			|B_{m-1}|\cdot p\cdot  \mathrm P_p[\piv 1 {4n}]
			\leq 2\cdot \mathrm{deg}(G) \mathrm E_p[|\mathcal H|].
		\end{equation}
		Let us define $\mathfrak{C}$ to be the family of all the open clusters in $B_m$ that intersect $\partial B_m$. Define $\overline{\mathcal C}$ to be the union of all these clusters. Given a subset $S$ of $B_m$, we write $\mathsf{open}(S)$ (resp.\@ $\mathsf{closed}(S)$)  for the set of open (resp.\@ closed) edges of $E(B_m)$ adjacent to at least one vertex of $S$. In order to bound the size of  $\mathcal H$, we rely on the following identities,
		\begin{align}
			\label{eq:33}
			&\sum_{\mathcal C \in \mathfrak C}|\mathsf{open}(\mathcal C)|= |\mathsf{open}(\overline{\mathcal C})|\\
			&\sum_{\mathcal C \in \mathfrak C}|\mathsf{closed}(\mathcal C)|= |\mathsf{closed}(\overline{\mathcal C})|+|\mathcal H|,\label{eq:34}
		\end{align}
		which are proved by a counting argument. The first sum counts all the open edges in $B_m$ connected to the boundary of $B_m$, which also corresponds to the open edges adjacent to $\overline{\mathcal C}$. The second sum counts  all the closed edges which are connected to the boundary of $B_{m}$, except that  the edges of $\mathcal H$ are counted twice.
		
		Furthermore, using that  the event $\{e \lr{} \partial B_m\}$ is independent of the status of the edge~$e$, we find
		\begin{align}
			\label{eq:35}
			p\cdot \mathrm E_p[|\mathsf{closed}(\overline{\mathcal C})|]
			&=\sum_{e\subset B_m}p\cdot \mathrm P[ e\text{ is closed},\, e\lr{}\partial B_m]\\
			&=\sum_{e\subset B_m}p(1-p)\cdot\mathrm P[e\lr{}\partial B_m]\\
			&=\sum_{e\subset B_m}(1-p)\cdot \mathrm P[e\text{ is open},\,e\lr{}\partial B_m]\\
			&=(1-p)\cdot \mathrm E_p[|\mathsf{open}(\overline{\mathcal C})|].
		\end{align}
		Taking the expectation in \eqref{eq:33} and \eqref{eq:34}, and using the computation above, we get    
		\begin{equation}
			\label{eq:36}
			\mathrm E_p[|\mathcal H|]=\frac1p\mathrm E_p[\sum_{\mathcal C\in \mathfrak C} h(\mathcal C)],
		\end{equation}
		where  $h(\mathcal C)=p|\mathsf{closed}(\mathcal C)|-(1-p)|\mathsf{open}(\mathcal C)|$.
		In the right-hand side above, the fact that the sum is over a random set makes it delicate to study. To overcome this difficulty, we will ``root'' each cluster at some of its vertices, and sum over the possible roots. For every $x\in B_m$, let $\mathcal C_{x}$ be the cluster of $x$ in $B_m$. Using that for every cluster $\mathcal C$, there are  exactly $|\mathcal C|$ vertices $x$ such that $\mathcal C_x=\mathcal C$, we get
		\begin{equation}\label{eq:37}
			\mathrm E_p\left[\sum_{\mathcal C\in\mathfrak C}f( \mathcal C)\right]= \sum_{x\in B_m}\mathrm E_p\left [\frac{f(\mathcal C_x)}{|\mathcal C_x|}\mathbf{1}_{\{x\lr{}\partial B_m\}}\right],
		\end{equation}
		for every function $f:\mathcal P(B_m)\to\mathbb R$. Applying this to $f=h$, and using the Cauchy--Schwarz inequality, we get 
		\begin{align}
			\mathrm E_p\left[\sum_{\mathcal C\in \mathfrak C} h(\mathcal C)\right] \leq\left(\sum_{x\in B_m}\mathrm E_p \left[\frac{h(\mathcal C_x)^2}{|\mathcal C_x|^{1+\varepsilon}}\right]\right)^{1/2} \left(\sum_{x\in B_m}\mathrm E_p \left[\frac{\mathbf{1}_{\{x\lr{}\partial B_m\}}}{|\mathcal C_x|^{1-\varepsilon}}\right]\right)^{1/2}.\label{eq:38}
		\end{align}
		Applying \eqref{eq:37} to $f(S)=|S|^{\varepsilon}$ and using Hölder's inequality with parameters $\frac 1{\varepsilon}$ and $\frac1{1-\varepsilon}$ (we may and will assume that $\varepsilon <1)$,  we obtain
		\begin{equation}\label{eq:39}
			\sum_{x\in B_m}\mathrm E_p \left[\frac{\mathbf{1}_{\{x\lr{}\partial B_m\}}}{|\mathcal C_x|^{1-\varepsilon}}\right]
			= \mathrm E_p\big[\sum_{\mathcal C\in \mathfrak C}|\mathcal C|^\varepsilon\big]
			\leq \mathrm E_p\Big[\big(\sum_{\mathcal C\in\mathfrak C}|\mathcal C|\big)^\varepsilon|\mathfrak C|^{1-\varepsilon}\Big]
			\leq |B_m|^\varepsilon\,|\partial B_m|^{1-\epsilon},
		\end{equation}
		where in the third step we used that every cluster in $\mathfrak C$ has to touch $\partial B_m$. By \eqref{eq:32}, \eqref{eq:36}, \eqref{eq:38} and \eqref{eq:39}, we find
		\begin{equation}
			\label{eq:40}
			\mathrm P_p[\piv 1 {4n}]\le c_2 \left(\tfrac1{|B_m|}\sum_{x\in B_m}\mathrm E_p \left[\frac{h(\mathcal C_x)^2}{|\mathcal C_x|^{1+\varepsilon}}\right]\right)^{1/2} \left(\frac{|\partial B_m|}{|B_{m}|}\right)^{(1-\varepsilon)/2}, 
		\end{equation}
		where $c_2$ is a finite constant depending only on $\eta$.  
		To conclude the proof, it suffices to show that  for every fixed $x\in B_m$, the quantity
		\begin{equation}
			\label{eq:41}
			\mathrm E_p \left[\frac{h(\mathcal C_x)^2}{|\mathcal C_x|^{1+\varepsilon}}\right] 
		\end{equation}
		is smaller than some constant $c_3<\infty$. This will be achieved by interpreting $h(\mathcal C_x)$ as the value of a martingale at some stopping time, when we perform a certain exploration of the cluster $\mathcal C_x$.
		
		First, fix an arbitrary deterministic ordering of the edges of $B_m$. Set $(O_0,C_0)=(\emptyset,\emptyset)$. Then, let $e_1$ be the smallest edge adjacent to $x$ and set
		\begin{equation}
			(O_1,C_1)=
			\begin{cases}
				(\{e_1\},\emptyset)&\text{ if $e_1$ is open}\\
				(\emptyset,\{e_1\})&\text{ if $e_1$ is closed}.
			\end{cases}
		\end{equation}
		By induction, for $t\geq 2$,   define $e_t$ to be the smallest  edge  that is adjacent to an edge of $O_{t-1}$ (if it exists) and
		\begin{equation}
			(O_t,C_t)=
			\begin{cases}
				(O_{t-1}\cup\{e_t\},C_{t-1})&\text{ if $e_t$ is open}\\
				(O_{t-1},C_{t-1}\cup\{e_t\})&\text{ if $e_t$ is closed}.
			\end{cases}
		\end{equation}
		If there is no such edge, we define $(O_t,C_t)=(O_{t-1},C_{t-1})$. Observe that at this moment, $(O_t, C_t)$ corresponds to $(\textsf{open}(\mathcal C_x), \textsf{closed}(\mathcal C_x))$. Let $T=|\textsf{open}(\mathcal C_x)|+| \textsf{closed}(\mathcal C_x)|$ be the time at which the exploration stabilises. We define
		\begin{equation}X_t=\sum_{k=1}^{t\wedge T} p\mathbf{1}_{\{\omega(e_k)=0\}}-(1-p)\mathbf{1}_{\{\omega(e_k)=1\}}.
		\end{equation}
		Notice that $X_T=h(\mathcal C_x)$ and $T\leq \deg(o)|\mathcal C_x|$, since any vertex is adjacent to at most $\deg(o)$ edges in $B_m$. Thus, 
		\begin{equation}\label{eq:42}
			\mathrm E_p\left[\frac{h(\mathcal C_x)^2}{|\mathcal C_x|^{1+\varepsilon}}\right]\leq \mathrm \deg(o)^{1+\varepsilon}\mathrm E_p\left[\frac{X_T^2}{T^{1+\varepsilon}}\right].
		\end{equation}
		In order to upperbound $\mathrm E_p[X_T^2/T^{1+\varepsilon}]$, we use 
		that $(X_t)$ is a martingale with respect to the filtration generated by $\{\omega(e_t)\}_t$.  By first applying Doob's maximal  inequality and then using orthogonality of the increments, we obtain
		\begin{equation}\label{eq:152}
			\mathrm E_p[ \max_{k\le t}X_k^2]\le 4 \mathrm E_p[X_t^2] = 4 \sum_{k=1}^t \mathrm E_p[(X_k-X_{k-1})^2]\leq t.
		\end{equation}
		We conclude the proof by decomposing the expectation in the right-hand side of \eqref{eq:42} as           
		\begin{equation}
			\mathrm E_p\left[\frac{X_T^2}{T^{1+\varepsilon}}\right]\leq\sum_{i\geq 0}\frac{1}{2^{i(1+\varepsilon)}}\mathrm E_p\left[\max_{2^i\leq t\leq 2^{i+1}}X_t^2 \mathbf{1}_{\{2^i\leq T\leq 2^{i+1}\}}\right]\overset{\eqref{eq:152}}\leq \sum_{i\geq 0}\frac{2}{2^{i\varepsilon}}\leq \frac{2}{1-2^{-\varepsilon}}.
		\end{equation}
	\end{proof}
	
	\section{Sharp threshold results via hypercontractivity}
	\label{sec:sharp-thresh-seed}
	
	In this section, we establish a general sharp threshold result for connection events. Its proof involves the polynomial upper bound on the probability of $\piv 1 n$ from Section~\ref{sec:quantitative-uniqueness}, together with an abstract result from the theory of Boolean functions. Given a set $A\subset V$, we write $A^s$ for its $s$-thickening. Formally, $A^s$ is defined to be the set of vertices at distance at most $s$ from $A$.

	\begin{proposition}
		\label{prop:8}
		Let $G$ be a transitive graph of polynomial growth with $d\ge 1$. Let $\eta>0$. There is a constant $c=c(\eta,G)>0$ such that for every $p\in (\eta,1-\eta)$ and $\delta\in (0,1-\eta-p)$, the following holds. For every $C\subset V(G)$, $A,B\subset C$ and $s\ge 1$, we have
		\begin{equation}
			\label{eq:44}
			\mathrm P_p[A\lr{C} B] > \frac 1 {(cs)^{\delta/20}}\quad\implies\quad \mathrm P_{p+\delta}[A^s\lr{C^s} B^s] > 1-\frac1{(cs)^{\delta/20}}. 
		\end{equation}
	\end{proposition}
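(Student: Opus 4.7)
The plan is to apply Talagrand's inequality \cite{talagrand1994russo} on the influences of monotone Boolean functions to the event $E_s = \{A_s\lr{C_s} B_s\}$, obtaining a differential inequality in the percolation parameter. The polynomial two-arm bound from Proposition~\ref{prop:3} will then control the influences of edges lying in the bulk of $C_s$ and drive the threshold.

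Set $g(t) = \mathrm P_t[E_s]$. Since $A\subset A_s$, $B\subset B_s$, and $C\subset C_s$, one has $g(p)\geq \mathrm P_p[A\lr{C} B] > (cs)^{-\delta/20}$. Denote by $I_e^t$ the influence of the edge $e$ on $E_s$ at parameter $t$. Margulis--Russo gives $g'(t)=\sum_e I_e^t$, and Talagrand's inequality yields
\begin{equation}
g(t)(1-g(t)) \leq K \sum_e \frac{I_e^t}{\log(1/I_e^t)},
\end{equation}
so that, if $\alpha(t)$ is an upper bound on $\max_e I_e^t$, one obtains the differential inequality $g'(t)\geq K^{-1}\log(1/\alpha(t))\,g(t)(1-g(t))$. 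Integrating it from $p$ to $p+\delta$ gives
\begin{equation}
\frac{g(p+\delta)}{1-g(p+\delta)} \geq \frac{g(p)}{1-g(p)}\cdot \alpha^{-\delta/K}.
\end{equation}
Combining $g(p)\geq (cs)^{-\delta/20}$ with a uniform bound $\alpha \leq s^{-1/2+\varepsilon}$ for some small $\varepsilon$ then yields $1-g(p+\delta) \leq (cs)^{-\delta/20}$, which is the conclusion.

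The crux of the argument is therefore to bound $\alpha(t)$ by $s^{-1/2+\varepsilon}$. If an edge $e$ is pivotal for $E_s$, then there exist two edge-disjoint open paths in $C_s\setminus\{e\}$, one from $A_s$ and one from $B_s$ to the two endpoints of $e$. When $e$ is a \emph{bulk} edge, lying at distance at least $s/2$ from both $A_s$ and $B_s$, both paths must exit the ball of radius $s/2$ around an endpoint $x_e$ of $e$, so pivotality implies the two-arm event $\pivx{x_e}{1}{s/2}$. Proposition~\ref{prop:3} then bounds this probability by $c\,s^{-1/2+\varepsilon}$, uniformly in $t\in[\eta,1-\eta]$.

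The main obstacle is the boundary layer of edges within distance $s/2$ of $A_s\cup B_s$, for which Proposition~\ref{prop:3} gives no control. I would handle them by a refined Talagrand-type inequality, following \cite{MR3765895, duminil2019upper}, in which the variance of $\mathbf{1}_{E_s}$ is bounded by the influences of \emph{bulk} edges only; the thickening from $A$ to $A_s$ and from $B$ to $B_s$ is precisely what leaves enough room between the endpoints and the bulk for such a local version of the inequality to close. This is the technical heart of the proof and the reason why the thickenings $A_s$, $B_s$, $C_s$ appear in the conclusion rather than $A$, $B$, $C$ themselves.
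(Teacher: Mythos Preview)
Your high-level plan is right: Talagrand's inequality plus the polynomial two-arm bound of Proposition~\ref{prop:3} for edges far from $A$ and $B$. You have also correctly located the difficulty at the boundary layer. However, your proposed treatment of that layer is where the argument breaks down.

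If you apply Talagrand directly to the single event $E_s=\{A_s\lr{C_s}B_s\}$, you need a bound on $\max_e I_e^t$. For an edge $e$ adjacent to $A_s$ (or to $B_s$, or on the outer rim of $C_s$), nothing prevents $I_e^t$ from being of order~$1$; no polynomial-in-$s$ bound is available there, and your differential inequality $g'\ge K^{-1}\log(1/\alpha)\,g(1-g)$ gives no useful information. The phrase ``a refined Talagrand-type inequality in which the variance is bounded by the influences of bulk edges only'' is not a result one can cite off the shelf, and the thickening from $(A,B,C)$ to $(A_s,B_s,C_s)$ does nothing for a \emph{single} event: it just moves the problematic boundary layer outward by~$s$.

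The mechanism the paper (and the references you cite) actually uses is different. One introduces an \emph{interpolating family} of events
\[
\mathcal E_i=\{A_{ir}\lr{C_{(i+1)r}}B_{ir}\},\qquad i=0,\dots,r-1,\quad r\asymp\sqrt{s},
\]
all sandwiched between $\{A\lr{C}B\}$ and $\{A_s\lr{C_s}B_s\}$. The crucial observation is that the \emph{boundary-pivotal} events for different~$i$ are pairwise disjoint: if a fixed edge in the $i$-th boundary shell is a closed pivotal for~$\mathcal E_i$, then $\mathcal E_j$ already occurs for every $j>i$; a symmetric statement handles open pivotals near~$\partial C$. Summing these disjoint probabilities gives $\sum_i m_i\le 1$, so by pigeonhole at least $r/2$ indices~$i$ have boundary influence $m_i\le 4/r\asymp s^{-1/2}$. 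For those good~$i$, the maximal influence is now polynomially small (boundary edges by pigeonhole, bulk edges by Proposition~\ref{prop:3}), Talagrand applies to each~$\mathcal E_i$, and averaging the resulting differential inequalities over~$i$ before integrating from $p$ to $p+\delta$ produces the claimed gain $(cs)^{\delta/10}$ for some~$\mathcal E_i$, hence for the pair $(\{A\lr{C}B\},\{A_s\lr{C_s}B_s\})$.

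In short: the missing idea is not a sharper functional inequality but a pigeonhole over a one-parameter family of events, exploiting that the ``bad'' boundary layers for different members of the family are disjoint.
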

	
	\begin{remark}
		Early applications of sharp threshold techniques to percolation theory include \cite{russoapprox, bksnoise, MR2257131}. These techniques have now become a standard tool to study or prove sharpness of the phase transition for percolation processes. Applications usually involve a certain increasing event on a torus which is translation-invariant, and for which the result of \cite{KKL} ensures a sharp threshold phenomenon.
		
		Proposition~\ref{prop:8} above is inspired by the works \cite{MR3765895} and \cite{duminil2019upper} where abstract  sharp threshold results are used directly on $\mathbb Z^d$, without relying on translation-invariant events on a torus. Let us briefly expose the main difficulty we have to overcome in the present paper. An important idea in the  two works mentioned above is that  the event $\mathcal{E}=\{A \lr{C}B\}$ is ``geometrically similar'' to its translate $\tau\cdot \mathcal{E}$  by a small vector, in the sense that $A$, $B$ and $C$ are close to their translates $\tau\cdot A$, $\tau\cdot B$ and $\tau\cdot C$ respectively. This is true for $\mathbb Z^d$ (and more generally for Cayley graphs of \emph{abelian} groups), but not for general transitive graphs. 
	\end{remark}
	
	\begin{proof}[Proof of Proposition~\ref{prop:8}]
		In the proof below,  $c_1,c_2,\ldots$ denote positive constants that may depend on $\eta$ and $G$ but are independent of everything else.  Without loss of generality, we may (and will) assume that $A$, $B$ and $C$ are finite. By \cite{bourgain1992influence} and \cite[Corollary~1.2]{talagrand1994russo}, there exists $c_1=c_1(\eta)>0$ such that the following inequality holds for any increasing event $\mathcal A$ depending on finitely many edges:
		\begin{equation}\label{eq:45}
			\sum_{e\in E}\mathrm P_p[e \text{ is pivotal for }\mathcal A] \geq c_1\mathrm P_p[\mathcal A](1-\mathrm P_p[\mathcal A])f_p(\mathcal A),
		\end{equation}
		where
		\begin{equation}
			f_p(\mathcal A)=\log\left(\frac{1}{\max_{e\in E}\mathrm P_p[e\text{ is pivotal for } \mathcal A]}\right).
		\end{equation}
		Using Russo's formula together with~\eqref{eq:45},  we obtain
		\begin{equation}
			\label{eq:46}
			\frac{d}{dp}\log\left(\frac{\mathrm P_p[\mathcal A]}{1-\mathrm P_p[\mathcal A]}\right)\geq c_1 f_p(\mathcal A).
		\end{equation}
		
		To prove the desired inequality, we will define a sequence of nested and increasing events $\mathcal E_i$ such that $\{A\lr{C}B\}\subset\mathcal E_i\subset\{A^s\lr{C^s}B^s\}$. The first step will consist in finding a uniform lower bound for $f_p(\mathcal E_i)$. We will do so by proving that the probability of any edge to be pivotal is uniformly small.
		\begin{figure}
			\centering
			\includegraphics[scale=0.8]{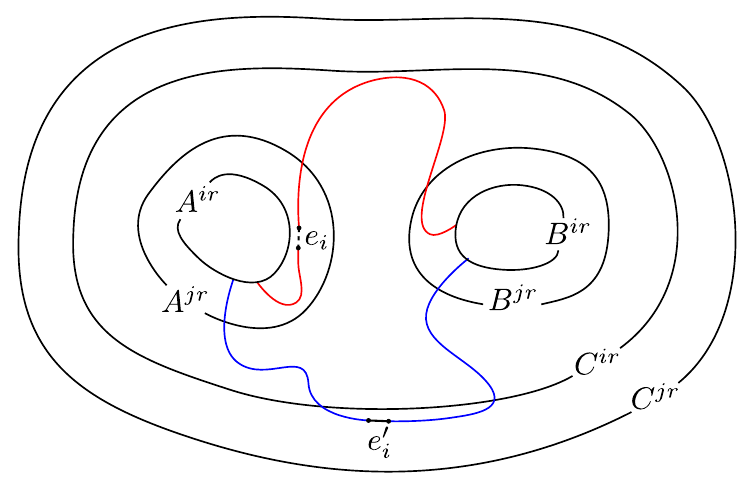}
			\label{fig:4}
			\caption{The red path illustrates the event $\mathcal F_i$ and the blue one $\mathcal F_i'$.}
		\end{figure}
		Set $r=2\lfloor\sqrt s/2\rfloor$. Without loss of generality, we may assume that $s\ge 4$ and look for a suitable $c$ in $(0,1/4)$, so that $r > 0$. For $i\in\{0,\ldots,r-1\}$, consider the set  $D_i=(A^{(i+1)r}\setminus A^{ir})\cup(B^{(i+1)r}\setminus B^{ir})$, the event $\mathcal E_i=\{A^{ir}\lr{C^{(i+1)r}}B^{ir}\}$, and
		\begin{equation}
			\label{eq:47}
			m_i=\max_{e\subset D_i}\mathrm P_p[e\text{ is a closed pivotal for } \mathcal E_i].
		\end{equation}
		For every $i$, let  $e_i$ be a fixed edge that maximises the probability in~\eqref{eq:47} and define  $\mathcal F_i$ to be the event $\{e_i \text{ is a closed pivotal for } \mathcal E_i\}$. We now fix $i,j\in\{0,\ldots,r-1\}$ with $i<j$. First, let us assume that the event $\mathcal F_i$ occurs. Then the edge $e_i$ is closed and its two endpoints are connected in $C^{(i+1)r}$ by disjoint open paths to $A^{ir}$ and $B^{ir}$, respectively. This implies that there is an open path connecting $A^{jr}$ to $B^{jr}$ in $C^{(i+1)r}$. Thus, $\mathcal F_i\subset \mathcal E_j$. On the other hand, if $\mathcal F_j$ occurs, the fact that $e_j$ is closed and pivotal implies that there is no open path connecting $A^{jr}$ to $B^{jr}$ in $C^{jr}$. Thus $\mathcal F_j\subset\mathcal E_j^c$. This leads us to a key observation, which is that $\mathcal F_i\cap \mathcal F_j=\emptyset$ for $i<j$. Therefore, we have
		\begin{equation}
			\label{eq:48}
			\sum_{i=0}^{r-1} m_i=\sum_{i=0}^{r-1}\mathrm P_p[\mathcal F_i]\leq 1.
		\end{equation}	
		Similarly,  for $i\in\{0,\ldots,r-1\}$, we can define the set $D'_i=C^{(i+1)r}\setminus C^{ir}$ and
		\begin{equation}
			\label{eq:49}
			m'_i=\max_{e'\subset D'_i}\mathrm P_p[e'\text{ is an open pivotal for } \mathcal E_i].
		\end{equation}
		For every $i$, let $e_i'$ be a fixed edge that maximises the probability in~\eqref{eq:49} and define $\mathcal F_i'$ to be the event $\{e_i' \text{ is an open pivotal for } \mathcal E_i\}$. Using the same reasoning as before, we can prove that for $i<j$, we have $\mathcal F_i'\subset \mathcal E_i$ and $\mathcal F_j'\subset \mathcal E_i^c$. Thus, $\mathcal F_i'\cap \mathcal F_j'=\emptyset$. This, together with~\eqref{eq:48}, implies that $\sum_{i=0}^{r-1} m_i+m_i'\leq 2$. From this, it follows that we can fix a set $I_p$ with $|I_p|\geq r/2$ and such that, for all $i\in I_p$, we have $\max\{m_i,m'_i\}\leq\frac{4}{r}$. Since the status of an edge is independent of the event that it is pivotal, we obtain
		\begin{equation}\label{eq:50}
			\max_{e\subset D_i\cup D'_i}\mathrm P_p[e\text{ is pivotal for } \mathcal E_i]\leq \max\left\{\frac{1}{p},\frac{1}{1-p}\right\}\cdot\frac{4}{r}\leq \frac{4}{\eta r}.
		\end{equation}
		Pick any $i\in I_p$. Let $e$ be an edge in $C^s$ such that $e\not\subset D_i\cup D_i'$. If $e\subset A^{ir}\cup B^{ir}\cup(C^s\setminus C^{(i+1)r})$ then $\mathrm P_p[e\text{ is pivotal for } \mathcal E_i]$ is equal to $0$. Otherwise, the $r$-neighbourhood of $e$ lies in $C^{(i+1)r}$. By Proposition~\ref{prop:3}, we get
		\begin{equation*}
			\mathrm P_p[e\text{ is closed pivotal for } \mathcal E_i]\leq \mathrm P_p[\piv{1}{r}]\leq \frac{1}{c_2 s^{1/5}}.
		\end{equation*}
		Combined with \eqref{eq:50}, this means that for all $i\in I_p$,
		\begin{equation*}
			\max_{e\subset C^s} \mathrm P_p[e\text{ is a pivotal for } \mathcal E_i]\leq \frac{1}{c_3 s^{1/5}}.
		\end{equation*}
		Using this, we obtain the following inequality:
		\begin{equation}\label{eq:51}
			\sum_{i=0}^{r-1} f_p(\mathcal E_i)\geq \sum_{i\in I_p}f_p(\mathcal E_i)\geq \frac{r}{2}\log(c_3 s^{1/5}).
		\end{equation}
		Now, we can use~\eqref{eq:46} for all the events $\mathcal E_i$, where $i\in\{0,\ldots,r-1\}$. Integrating between $p$ and $p+\delta$ and using~\eqref{eq:51}, we get
		\begin{equation*}
			\sum_{i=0}^{r-1}\log\left(\frac{\mathrm P_{p+\delta}[\mathcal E_i]}{1- \mathrm P_{p+\delta}[\mathcal E_i]}\cdot\frac{1-\mathrm P_p[\mathcal E_i]}{\mathrm P_p[\mathcal E_i]}\right)\geq \frac{\delta r}{2}\log(c_3 s^{1/5}).
		\end{equation*}
		Therefore, we can take some index $i_0$ such that
		\begin{equation*}
			\log\left(\frac{\mathrm P_{p+\delta}[\mathcal E_{i_0}]}{1- \mathrm P_{p+\delta}[\mathcal E_{i_0}]}\cdot\frac{1-\mathrm P_p[\mathcal E_{i_0}]}{\mathrm P_p[\mathcal E_{i_0}]}\right)\geq \frac{\delta}{2}\log(c_3 s^{1/5}),
		\end{equation*}
		which implies
		\begin{equation*}
			\frac{1}{(1-\mathrm P_{p+\delta}[\mathcal E_{i_0}])\mathrm P_p[\mathcal E_{i_0}]}\geq \frac{\mathrm P_{p+\delta}[\mathcal E_{i_0}]}{1- \mathrm P_{p+\delta}[\mathcal E_{i_0}]}\cdot\frac{1-\mathrm P_p[\mathcal E_{i_0}]}{\mathrm P_p[\mathcal E_{i_0}]}\geq (c_4 s)^{\delta/10}.
		\end{equation*}
		Since $\{A\lr{C}B\}\subset\mathcal E_{i_0}\subset \{A^s\lr{C^s}B^s\}$, we have
		\begin{equation*}
			\mathrm P_{p+\delta}[A^s\lr{C^s}B^s]\geq \mathrm P_{p+\delta}[\mathcal E_{i_0}]\geq 1-\frac{1}{\mathrm P_p[\mathcal E_{i_0}](c_4 s)^{\delta/10}}\geq 1-\frac{1}{\mathrm P_p[ A\lr{C}B](c_4s)^{\delta/10}}.
		\end{equation*}
		Proposition~\ref{prop:8} follows.
	\end{proof}
	
	\begin{corollary}
		\label{cor:1}
		Let $G$ be a transitive graph of polynomial growth with $d\ge 2$. Let $p>p_c$. There exist $\delta>0$ and $c>0$ such that, for all $s\geq 1$,  we have
		\begin{equation}
			\mathrm P_p[B_s\lr{}\infty]\geq 1-\frac{1}{(cs)^{\delta}}.\label{eq:43}
		\end{equation}
		
	\end{corollary}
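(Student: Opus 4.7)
The plan is to deduce this as an essentially immediate consequence of Proposition~\ref{prop:8}, using the positivity of the percolation density at an auxiliary parameter $p' \in (p_c, p)$ as the seed hypothesis, and the sprinkling $\delta_0 := p - p'$ to upgrade that uniform positive bound into a quantitative power-law estimate at parameter $p$.

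Concretely, I would first fix $p' \in (p_c, p)$ and pick $\eta > 0$ small enough that both $p'$ and $p$ lie in $(\eta, 1 - \eta)$. Since $p' > p_c$, we have $\theta(p') := \mathrm P_{p'}[o \lr{} \infty] > 0$, and by monotonicity $\mathrm P_{p'}[o \lr{} \partial B_n] \geq \theta(p')$ for every $n \geq 1$. I then apply Proposition~\ref{prop:8} at parameter $p'$, with sprinkling $\delta_0$ and inputs $A = \{o\}$, $B = \partial B_n$, $C = V(G)$. Letting $c_0 = c_0(\eta,G)$ denote the constant provided by the proposition, the hypothesis $\mathrm P_{p'}[A \lr{C} B] > (c_0 s)^{-\delta_0/20}$ is satisfied as soon as $(c_0 s)^{\delta_0/20} \geq 1/\theta(p')$; this defines a threshold $s_0$ depending only on $p, p', G$. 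For every $s \geq s_0$ and every $n \geq 1$, the conclusion of Proposition~\ref{prop:8} reads
\begin{equation}
\mathrm P_p[B_s \lr{} (\partial B_n)_s] \geq 1 - (c_0 s)^{-\delta_0/20}.
\end{equation}

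To finish, I would use that the $s$-thickening $(\partial B_n)_s$ is contained in $V \setminus B_{n-s-2}$ (any vertex within distance $s$ of an endpoint of an edge of $\partial B_n$ sits outside $B_{n-s-2}$), so the event on the left is included in $\{B_s \lr{} V \setminus B_{n-s-2}\}$. Letting $n \to \infty$ then gives $\mathrm P_p[B_s \lr{} \infty] \geq 1 - (c_0 s)^{-\delta_0/20}$ for every $s \geq s_0$. The finitely many small values $s < s_0$ are handled by the trivial lower bound $\mathrm P_p[B_s \lr{} \infty] \geq \theta(p) > 0$, which can be absorbed into the same form $1 - (cs)^{-\delta}$ after suitably shrinking the constants $c$ and $\delta$.

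I do not anticipate any real obstacle here: Proposition~\ref{prop:8} does essentially all of the work, and the only minor subtlety is the elementary boundary-geometry identification $(\partial B_n)_s \subset V \setminus B_{n - O(s)}$ that is used to pass from a ``thick annulus'' connection event to the event of being connected to infinity.
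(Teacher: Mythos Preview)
Your proposal is correct and follows essentially the same route as the paper: apply Proposition~\ref{prop:8} with $A=\{o\}$, $B=\partial B_n$, $C=V(G)$ at an auxiliary supercritical parameter, then let $n\to\infty$. Your write-up is in fact slightly more careful than the paper's own proof, which does not spell out the handling of small $s$ or the inclusion $(\partial B_n)_s\subset V\setminus B_{n-O(s)}$.
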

	
	\begin{proof}
		Let $\delta>0$ be such that $p>p_c+20\delta$.    We know that $\mathrm P_{p-20\delta}[o\lr{}\infty]>0$.  By Proposition~\ref{prop:8} applied to $A=\{o\}$, $B=\partial B_n$ and $C=V(G)$,   we have $\mathrm P_{p}[B_s\lr{}\partial B_{n-s}]>1-(cs)^{-\delta}$, where $c$ is a positive constant independent of  $s$ and $n$. The proof follows by letting $n$ tend to infinity.
	\end{proof}

	\section{A priori bound  on the uniqueness zone}
	\label{sec:bounds-uniq-zone}
	
	The goal of this section is to prove Proposition~\ref{prop:4} below.
	
	\begin{proposition}\label{prop:4}
		Let $G$ be a transitive graph of polynomial growth. Let $p>p_c$. There are some $\chi \in (0,1)$ and $c>0$ such that the following holds: for every $q\in [p,1]$, for every $n\ge 1$, we have
		\begin{equation}
			\label{eq:52}
			\mathrm P_q[\piv{s(n)}{n}] \le cn^{-1/4},
		\end{equation}
		where $s(n)=\exp((\log n)^\chi)$.
	\end{proposition}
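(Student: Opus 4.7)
The starting point is Proposition~\ref{prop:3}, which provides the single-point bound $\mathrm P_q[\piv{1}{n}] \le c n^{-1/2+\varepsilon}$ uniformly in $q \in [\eta,1]$. The target inequality enlarges the inner radius from $1$ to $\exp((\log n)^\chi)$ at the cost of a weaker polynomial decay. The plan is a multi-scale renormalisation: choose a doubly-exponential sequence of scales $n_k = 2^{\beta^k}$ (so that $\log\log n$ steps suffice to reach $n$) together with an increasing sequence of uniqueness radii $s_k$, and prove by induction on $k$ that
\begin{equation}
\sup_{q \ge p} \mathrm P_q[\piv{s_k}{n_k}] \le \alpha_k,
\end{equation}
where $\alpha_k$ remains polynomially small in $n_k$ and $s_k$ grows at each step by a fixed factor. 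After $k \sim \log\log n$ iterations one reaches $s_k \ge \exp((\log n)^\chi)$ at scale $n$, with $\alpha_k$ still below $n^{-1/4}$.

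\textbf{Inductive step.} Assume $\piv{s_{k+1}}{n_{k+1}}$ occurs: two disjoint $q$-clusters in $B_{n_{k+1}}$ both join $B_{s_{k+1}}$ to $\partial B_{n_{k+1}}$. Fix an intermediate scale, say $r = n_k$, with $s_{k+1} \ll r \ll n_{k+1}$. By Lemma~\ref{lem:5}, each of the two clusters must meet the exposed sphere $S_r^\infty$; pick landing points $y_1, y_2$ on the respective clusters. Around $y_1$ the configuration contains a ``displaced'' two-arm configuration: one arm going back towards the inner ball and a disjoint arm going out to $\partial B_{n_{k+1}}$. Using automorphism-invariance, the coarse connectedness of $S_r^\infty$ (Remark~\ref{rem:coarse-conn-spheres}), and the transport provided by Lemma~\ref{lem:7}, I would translate this into a two-arm event $\pivx{y_1}{s_k}{r'}$ at scale $r' = n_{k+1} - r$ and inner radius $s_k$, to which the inductive bound $\alpha_k$ applies.

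\textbf{Counting and the role of Corollary~\ref{cor:1}.} A naive sum over landing pairs $(y_1,y_2)$ costs a factor of order $|B_r|^2$, which is too large. The key --- and this is where the promised combination of Cerf's argument with a new renormalisation enters --- is to rerun the Aizenman--Kesten--Newman martingale argument of Section~\ref{sec:quantitative-uniqueness} \emph{inside the annulus} $B_{n_{k+1}} \setminus B_r$ rather than inside a ball. The role played in Lemma~\ref{lem:10} by the ratio $|\partial B_m|/|B_m|$ is replaced by a corresponding small annular quantity, and Corollary~\ref{cor:1} supplies the essential input that $B_{s_k}$ is connected to infinity with probability $1 - (cs_k)^{-\delta}$, which both selects a well-defined infinite cluster and tames the exploration count. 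This should produce a recursion of the form
\begin{equation}
\alpha_{k+1} \le C\, n_{k+1}^{-1/2+\varepsilon} + n_{k+1}^{O(1)}\, \alpha_k \cdot \psi(s_k),
\end{equation}
with $\psi(s_k)$ a small factor furnished by Corollary~\ref{cor:1}. Choosing $\beta$ close to $1$ and $\chi$ accordingly, the recursion propagates a polynomial bound through all $\log\log n$ steps and delivers $\alpha_k \le n_k^{-1/4}$.

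\textbf{Main obstacle.} The chief difficulty, and the whole reason for introducing a new renormalisation rather than transcribing Cerf's proof, is the lack of symmetry. On $\mathbb Z^d$ one translates every point of $B_m$ to the origin and absorbs the factor $|B_m|$ into a polynomial gain coming from translation-invariance. In the present setting this averaging step is unavailable; I plan to replace it by the cutset and annulus geometry of Section~\ref{sec:geom-lemmas} (coarse connectedness of minimal cutsets, exposed spheres, Lemma~\ref{lem:6}, Lemma~\ref{lem:7}), which moves the two-arm event around in a \emph{geometric} rather than group-theoretic way. The delicate part is the bookkeeping: verifying that the compounded error over $\log\log n$ iterations stays below $n^{-1/4}$ while $s_k$ accumulates to $\exp((\log n)^\chi)$. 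This is also the reason the resulting uniqueness zone is only quasi-polylogarithmic, rather than polynomial as in Cerf's theorem; for Proposition~\ref{prop:1} this a priori bound is nonetheless sufficient, since its sole purpose is to seed the stronger renormalisation of Section~\ref{central-proposition}.
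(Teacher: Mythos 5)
Your high-level structure is correct --- bootstrap from Proposition~\ref{prop:3} over $\simeq \log\log n$ scales, growing the inner radius geometrically so that it lands at $\exp((\log n)^\chi)$ --- and you correctly identify Corollary~\ref{cor:1} as an essential quantitative input. But the inductive mechanism you describe is different from the paper's and, as written, does not close.

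The paper's engine is a pair of lemmas you do not invoke. Lemma~\ref{lem:12} is the \emph{enlargement} step: for $1<u\le m\le n/2$,
\[
\mathrm P_p[\piv{u}{n}] \le \mathrm P_p[\piv{1}{n/2}]\cdot \frac{|S_u|^2\,|B_m|}{\min_{a,b\in S_u}\tau_{p,m}(a,b)},
\]
proved by fixing the cluster $\mathcal C_a$, paying $\tau_{p,m}(a,b)$ via Harris--FKG to force a $B_m$-connection from $b$ to $\partial^{\mathrm{out}}\mathcal C_a$, and then locating a $\pivx{x}{1}{n/2}$-event on the cluster boundary. Lemma~\ref{lem:13} is the \emph{improvement} step: a small $\mathrm P_p[\piv{u}{m}]$, combined with a chain of balls all hitting $\infty$ (this is precisely where Corollary~\ref{cor:1} is used), gives $\tau_{p,2m}(a,b)\ge\delta$ uniformly on $B_{u^{1+\delta}}$. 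Alternating these yields $\mathrm P_q[\piv{u_{k+1}}{n_{k+1}}]\le c_1 u_{k+1}^{2d}(2n_k)^d\, n_{k+1}^{-1/3}/\delta$ with $u_{k+1}=u_k^{1+\delta}$, $n_{k+1}=n_k^{50d}$: the entropy $u_{k+1}^{2d}m^d$ is defeated by the \emph{polynomial decay of $\piv{1}{n_{k+1}/2}$ at the much larger scale}, not by a small factor at the seed scale.

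Your proposed recursion $\alpha_{k+1}\le C n_{k+1}^{-1/2+\varepsilon} + n_{k+1}^{O(1)}\alpha_k\,\psi(s_k)$ cannot work: Corollary~\ref{cor:1} gives $\psi(s_k)\simeq (cs_k)^{-\delta}$, and since $s_k$ is quasi-polylogarithmic in $n_k$ (hence a fortiori in $n_{k+1}$), the factor $\psi(s_k)$ cannot offset $n_{k+1}^{O(1)}$, nor can $\alpha_k\le n_k^{-1/4}$ make up the deficit. The idea of re-running the Aizenman--Kesten--Newman martingale argument ``inside the annulus'' is also not what is needed here: that argument averages the density of local meeting points and only outputs a bound on $\piv{1}{\cdot}$, not on $\pivx{y_1}{s_k}{\cdot}$; it gives no mechanism for growing the inner radius. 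Similarly, the claim that the landing point $y_1\in S_r^\infty$ witnesses a $\pivx{y_1}{s_k}{r'}$-event is not justified --- the second arm at $B_{s_k}(y_1)$ belongs to the cluster through $y_2$, which need not pass near $y_1$ --- whereas the paper manufactures the second arm deterministically from the closed boundary of $\mathcal C_a$ after the FKG insertion. In short: you have the right scaffolding but are missing the two-point-function enlargement/improvement pair that makes the entropy accounting work.
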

	
	We say that the annulus of inner radius $s(n)$ and outer radius $n$ is a \emph{uniqueness zone} because Proposition~\ref{prop:4} tells us that $\mathrm P_q[U(s(n),n)]=1-\mathrm P_q[\piv{s(n)}{n}]$ converges to 1 --- actually at a controlled speed. The size of this uniqueness zone will in fact determine the region in which we are able to glue clusters. This will lead to the important notion of seeds, which will be instrumental in Section~\ref{sec:properties-seeds}.
	
	In order to prove Proposition~\ref{prop:4}, we will use a bootstrap argument, where the iterations consist in alternating uses of Lemma~\ref{lem:12} and Lemma~\ref{lem:13}. 
	
	The first lemma, directly adapted from \cite[Lemma 7.2]{cerf2015}, provides an upper bound on the probability of $\mathsf{Piv}(r,n)$ provided with some uniform lower bound on the two-point function restricted to a ball. Conversely, the second lemma deduces some lower bound on the two-point function in a ball, provided with some upper bounds on the probability of $\mathsf{Piv}(r,n)$. 
	
	From there, the proof goes as follows. Proposition~\ref{prop:3} and Lemma~\ref{lem:12} provide us with a good upper bound on the probability of  $\mathsf{Piv}(r,n)$, for some fixed and large $(r,n)$. Then Lemma~\ref{lem:13}  provides us with a good lower bound on the two-point function at the scale above. Plugging this estimate in Lemma~\ref{lem:12} yields a good upper bound on the probability of  $\mathsf{Piv}(r,n)$ at this larger scale.  Repeated inductively, this procedure leads to the quantitative estimate (\ref{eq:52}).
	
	\begin{notation}
		In this section, for $r\ge 0$, we set $S_r$ to be the sphere of centre $o$ and radius $r$, where $o$ is the root of $G$ that we use in the notation $B_m=B_m(o)$.
	\end{notation}
	
	\begin{lemma}
		\label{lem:12}
		For all $p\in [0,1]$ and $1<r\le m\leq n/2$, we have
		
		\begin{equation}
			\label{eq:53}
			\mathrm P_p[\piv{r}{n}] \leq \mathrm P_p[\piv{1}{n/2}]\cdot\frac{|S_r|^2|B_m|}{\min_{a,b\in S_r}\mathrm P_p[a\lr{B_{2m}}b]}.
		\end{equation}
	\end{lemma}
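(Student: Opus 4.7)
The plan is to prove the inequality by a union bound over canonical witness pairs $(a,b) \in S_u \times S_u$ for $\piv{u}{n}$, and then, for each such pair, a conditional/surgical argument that compares the witness event to a translated $\pivx{z}{1}{n/2}$ event summed over $z \in B_m$, paying the two-point function $\tau_{p,m}(a,b)$ as the price for bridging $a$ and $b$ inside $B_m(a)$.

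First, on $\piv{u}{n}$ I would extract a canonical witness pair $(a,b) \in S_u \times S_u$ by tracing each of the two disjoint open arms from $\partial B_n$ inward and recording their first visits to $S_u$. A union bound gives $\mathrm P_p[\piv{u}{n}] \le \sum_{a,b \in S_u} \mathrm P_p[\mathcal W_{a,b}]$, where $\mathcal W_{a,b}$ is the event that $(a,b)$ is this canonical pair. It therefore suffices to prove, for every fixed pair,
\[ \mathrm P_p[\mathcal W_{a,b}] \cdot \tau_{p,m}(a,b) \le |B_m| \cdot \mathrm P_p[\piv{1}{n/2}], \]
since dividing by $\min_{a,b} \tau_{p,m}(a,b)$ and summing over $(a,b) \in S_u \times S_u$ then yields exactly the claimed bound.

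Second, I would split the configuration into its restrictions to edges inside and outside $B_m(a)$. The ``outer parts'' of the two arms (from $\partial B_m(a)$ to $\partial B_n$) on $\mathcal W_{a,b}$ are determined by edges not lying in the interior of $B_m(a)$, and the event $\{a \lr{B_m(a)} b\}$ uses only edges inside $B_m(a)$ and has probability at least $\tau_{p,m}(a,b)$. The two contributions being independent, one obtains, on their intersection, two edge-disjoint open paths starting at $a$ and reaching $\partial B_n$: the concatenation of the internal open bridge from $a$ to $b$ with the outer arm of $b$, alongside the outer arm of $a$. Since $u \le n/2$ these restrict to two edge-disjoint open paths from $a$ to $\partial B_{n/2}(a)$.

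Third, I would extract a $\pivx{z}{1}{n/2}$ event by choosing $z$ to be the first vertex along the two edge-disjoint arms at which they separate; since this branching vertex lies in $B_m(a)$, after applying the graph automorphism sending $a$ to $o$ one gets a vertex in $B_m$. Two distinct neighbors of $z$ on the two arms then lie in distinct clusters of $B_{n/2}(z)$ and are both connected to $\partial B_{n/2}(z)$, witnessing $\pivx{z}{1}{n/2}$; summing over the $|B_m|$ possible values of $z$ and invoking automorphism-invariance converts the bound into $|B_m| \cdot \mathrm P_p[\piv{1}{n/2}]$. The main obstacle is making this last step rigorous: since the two edge-disjoint arms both pass through $a$, they lie in a common cluster near $a$, so one needs a careful local surgery --- for instance, temporarily sidestepping (at a constant finite-energy cost absorbed in $|B_m|$) the edge at $z$ through which the arms merge --- in order to actually exhibit two vertices of $B_1(z)$ in different clusters of $B_{n/2}(z)$ in the precise sense of the definition of $\pivx{z}{1}{n/2}$.
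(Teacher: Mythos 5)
Your high-level plan (union bound over $(a,b)\in S_u\times S_u$, then trade the bridge probability $\tau_{p,m}(a,b)$ for a one-arm pivotal event at a vertex of $B_m$) is the right shape, and it matches the paper's in spirit. But the mechanism you propose for realising this trade does not work, for several concrete reasons.

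First, the event on which you want to build is contradictory. Since $u\le m\le n/2$ and $a\in S_u$, the ball $B_m(a)$ is contained in $B_n$, so $\{a\lr{B_m(a)}b\}$ forces $a\lr{B_n}b$; but your witness event $\mathcal W_{a,b}$ is contained in $\piv u n$, which forces $a\nlr{B_n}b$. Hence $\mathcal W_{a,b}\cap\{a\lr{B_m(a)}b\}=\varnothing$, and the product $\mathrm P_p[\mathcal W_{a,b}]\cdot\tau_{p,m}(a,b)$ is not the probability of any intersection. Your fix is to factor $\mathcal W_{a,b}$ into an outer part (edges outside $B_m(a)$) times an inner part, but $\mathcal W_{a,b}$ also encodes the inner portions of the arms, so $\mathrm P_p[\mathcal W_{a,b}]$ does not factor this way; and even if you replace $\mathcal W_{a,b}$ by a purely outer event, the bridge $\{a\lr{B_m(a)}b\}$ only links $a$ to $b$, not to the points of $\partial B_m(a)$ where the outer arms enter, so the claimed concatenations are not open paths. (There is also a mismatch between $B_m(a)$ here and the paper's $\tau_{p,m}(a,b)=\mathrm P_p[a\lr{B_m(o)}b]$: neither ball contains the other, so $\mathrm P_p[a\lr{B_m(a)}b]\ge\tau_{p,m}(a,b)$ is not automatic.)

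Second, and more fundamentally, two \emph{edge-disjoint} open paths from $a$ to $\partial B_n$ do not witness $\pivx z 1 {n/2}$: they lie in the same open cluster, whereas $\piv 1 n$ is defined in terms of two \emph{disjoint clusters}. You flag this as ``the main obstacle,'' but the suggested remedy (a finite-energy surgery near $z$) would introduce constants like $1/p$ or $1/(1-p)$ that are not present in the statement of Lemma~\ref{lem:12}, which is claimed uniformly for all $p\in[0,1]$. This signals that the decomposition itself is wrong, not that a small local fix is missing.

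The paper's proof avoids all of this by conditioning on the whole cluster $\mathcal C_a=C$, and then using Harris--FKG to insert, for free (up to the factor $\tau_{p,m}(a,b)^{-1}$), the event $\{b\lr{B_m}\partial^{\mathrm{out}}C\}$. Because $\partial C$ consists of \emph{closed} edges, the vertex of $B_m$ where $b$'s cluster reaches $\partial^{\mathrm{out}}C$ is, by construction, adjacent across a closed edge to a vertex of $C$; both endpoints of that closed edge carry disjoint clusters reaching $\partial B_{n/2}$, giving $\pivx x 1 {n/2}$ immediately, with no surgery and no finite-energy cost. In short: condition on the \emph{full cluster}, not on a witness pair; glue $b$ to the \emph{external boundary} of that cluster (not to $a$); and let the closed boundary edge do the work that your proposal tries to extract from a branching vertex of two edge-disjoint paths.
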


	\begin{proof} Let us fix $u$, $m$ and $n$ as in the statement of Lemma~\ref{lem:12}. 
		Given a vertex $a\in B_n$, we denote by $\mathcal{C}_a$ the open cluster of $a$ in $B_n$.
		Given a second vertex $b\in B_n$, define $\mathfrak{C}_a^b$ to be the (deterministic) family of all connected subsets of $B_n$ that contain $a$, do not contain $b$, and intersect $\partial B_n$. By the union bound, we have
		\begin{align*}
			\mathrm P_p[\piv{r}{n}] & \leq \sum_{a,b\in S_r}\mathrm P_p[a\lr{}\partial B_n,~ b\lr{B_n\setminus \mathcal{C}_a}\partial B_n]\\
			& = \sum_{a,b\in S_r}\sum_{C\in\mathfrak{C}_a^b}\mathrm P_p[b\lr{B_n\setminus C}\partial B_n]\mathrm P_p[\mathcal{C}_a=C],
		\end{align*}
		where we used that the two last events are independent, since they depend on disjoint sets of edges. Let us fix some $C\in\mathfrak{C}_a^b$.
		The Harris--FKG Inequality yields
		\begin{equation*}
			\mathrm P_p[b\lr{B_n\setminus C}\partial B_n]\leq\frac{\mathrm P_p[b\lr{B_n\setminus C}\partial B_n,~b\lr{B_m}\partial^{\mathrm{out}} C]}{\mathrm P_p[b\lr{B_m}\partial^{\mathrm{out}}C] },
		\end{equation*}
		where $\partial^{\mathrm{out}} C$ is the (deterministic) set of vertices adjacent to $C$ but not belonging to it. 
		Observe that for every $C\in\mathfrak{C}_a^b$, as $C$ contains $a$ but not $b$, we have $\{a\lr{B_m}b\} \subset \{b\lr{B_m}\partial^{\mathrm{out}}C\}$, hence $\mathrm P_p[b\lr{B_m}\partial^{\mathrm{out}}C]\ge \mathrm P_p[a\lr{B_{2m}}b]$. Using the bound above and  independence again, we get
		\begin{align}
			\label{eq:62}
			&\sum_{C\in\mathfrak{C}_a^b}\mathrm P_p[b\lr{B_n\setminus C}\partial B_n]\mathrm P_p[\mathcal{C}_a=C]\\
			&\qquad \le \frac1{\mathrm P_p[a\lr{B_{2m}}b]} \sum_{C\in\mathfrak{C}_a^b} \mathrm P_p[\mathcal C_a=C,~ b\lr{B_n\setminus C}\partial B_n,~b\lr{B_m}\partial^{\mathrm{out}} C]\\
			&\qquad= \frac1{\mathrm P_p[a\lr{B_{2m}}b]}\mathrm P_p[a\lr{} \partial B_n,~b\lr{B_n\setminus \mathcal C_a}\partial B_n,~b\lr{B_m}\partial^{\mathrm{out}} \mathcal C_a].
		\end{align}
		\begin{figure}
			\centering
			\includegraphics[scale=0.8]{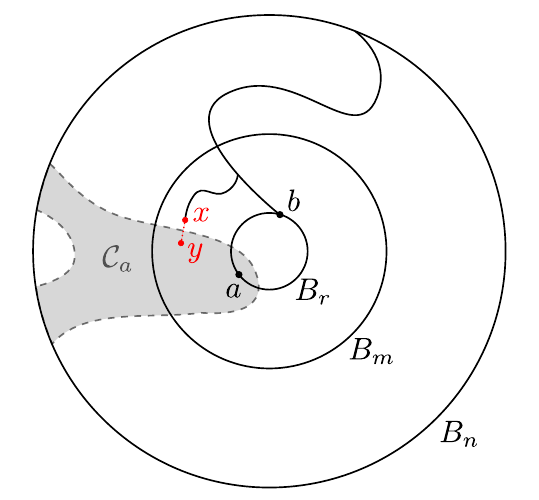}
			\caption{In the complement of $\mathcal C_a$, the existence of an open path inside $B_m$ joining the cluster of $b$ with an edge $\{x,y\}\in\partial\mathcal C_a$ implies that $x$ and $y$ are in disjoint clusters of diameter at least $n/2$.}
			\label{fig:5}
		\end{figure}%
		When the event in the last equation occurs,  we can find a closed edge $e=\{x,y\}\in\partial\mathcal C_a$ such that $x\lr{B_n\setminus \mathcal C_a}\partial B_n$ and $y\lr{\mathcal C_a}\partial B_n$. Hence, when this event occurs, so does $\pivx{x}{1}{n-m}$: see Figure~\ref{fig:5}.
		Therefore, by the union bound, we have
		\begin{equation*}
			\mathrm P_p[a\lr{} \partial B_n,~ b\lr{B_n\setminus \mathcal C_a}\partial B_n,~b\lr{B_m}\partial^{\mathrm{out}} \mathcal C_a]  \leq |B_m|\mathrm P_p[\piv{1}{n/2}].
		\end{equation*} 
		Putting all the equations together, we get
		\begin{align*}
			\mathrm P_p[\piv{r}{n}] &\leq \sum_{a,b\in S_r}\mathrm P_p[\piv{1}{n/2}]\frac{|B_m|}{\mathrm P_p[a\lr{B_{2m}}b]}\\
			&\leq \mathrm P_p[\piv{1}{n/2}]\cdot\frac{|S_r|^2|B_m|}{\min_{a,b\in S_r}\mathrm P_p[a\lr{B_{2m}}b]}.
		\end{align*}
	\end{proof}
	
	\begin{lemma}\label{lem:13}
		Let $p>p_c$. There exist $\delta>0$ and $r_0\ge1$ such that for every $r\ge r_0$ and every $m\ge r^{1+\delta}$, we have
		\begin{equation*}
			\mathrm P_p[\piv{r}{m}]\le \frac{\delta}{r^\delta}\quad \implies \quad\forall a,b \in  B_{r^{1+\delta}}\quad \mathrm P_p[a\lr{B_{2m}}b]\geq \delta.
		\end{equation*} 
	\end{lemma}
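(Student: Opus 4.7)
The plan is to prove the conclusion via a chain argument along a geodesic, using a carefully chosen local good event at each step of the chain to glue together large clusters into a single cluster of $B_{2m}(o)$ containing both $a$ and $b$.

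I will start from Corollary~\ref{cor:1}, which yields $\mathrm P_p[B_s \lr{} \infty] \ge 1-(cs)^{-\delta_0}$ for some $\delta_0 = \delta_0(p) > 0$ and $c > 0$. Fix $\delta \in (0, \delta_0)$ small relative to $\theta(p)^2$, where $\theta(p) := \mathrm P_p[o \lr{} \infty]$, and take $u_0$ large. For every vertex $x$, define the local good event
\[
E_x := \pivx{x}{u}{m}^c \cap \pivx{x}{u}{m+u}^c \cap \bigl\{B_{u/2}(x) \lr{B_{m+u}(x)} S_{m+u}(x)\bigr\}.
\]
The inclusion $\pivx{x}{u}{m+u} \subseteq \pivx{x}{u}{m}$ (a disjoint pair of clusters in $B_{m+u}(x)$ from $B_u(x)$ to $S_{m+u}(x)$ restricts, via first crossings of $S_m(x)$, to such a pair in $B_m(x)$ to $S_m(x)$) shows that both pivotal events have probability at most $\delta/u^\delta$ under the hypothesis, and combined with Corollary~\ref{cor:1} this gives $\mathrm P_p[E_x^c] \le 3\delta/u^\delta$ for $u \ge u_0$. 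On $E_x$ there is a unique cluster $\mathcal U_x^+$ in $B_{m+u}(x)$ from $B_u(x)$ to $S_{m+u}(x)$ containing a vertex of $B_{u/2}(x)$, and its restriction $\mathcal U_x$ is the unique cluster of $B_m(x)$ from $B_u(x)$ to $S_m(x)$.

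Fix $a \in B_L$ with $L := u^{1+\delta}$, pick a geodesic from $o$ to $a$, and extract points $o = x_0, x_1, \ldots, x_k = a$ on it with $d(x_i, x_{i+1}) \le u/2$, so $k \le 2u^\delta$. For $u_0$ large, $B_{m+u}(x_i) \subseteq B_{2m}(o)$ for every $i$ (the lower-order term $u$ compared to $L$ can be absorbed by slightly strengthening $m \ge u^{1+\delta}$, a routine adjustment of constants). The event
\[
A := \{o \lr{B_m(o)} S_m(o)\} \cap \{a \lr{B_m(a)} S_m(a)\} \cap \bigcap_{i=0}^{k} E_{x_i}
\]
has probability at least $\theta(p)^2 - 3(k+1)\delta/u^\delta \ge \theta(p)^2/2$ for $\delta$ sufficiently small, by Harris--FKG on the two increasing endpoint events and a union bound over the $E_{x_i}$'s.

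The heart of the argument is the \emph{gluing step}: I want to show $A \subseteq \{o \lr{B_{2m}(o)} a\}$. The endpoint events force $o \in \mathcal U_o$ and $a \in \mathcal U_a$ by uniqueness. For each consecutive pair, pick $v \in \mathcal U_{x_{i+1}}^+ \cap B_{u/2}(x_{i+1}) \subseteq B_u(x_i)$; since $\mathcal U_{x_{i+1}}^+$ reaches $S_{m+u}(x_{i+1})$, it contains a vertex $w$ with $d(x_i, w) \ge m + u - u/2 > m$. Following an open path in $\mathcal U_{x_{i+1}}^+$ from $v$ to $w$ up to its first vertex on $S_m(x_i)$ produces an open sub-path inside $B_m(x_i)$ from $v \in B_u(x_i)$ to $S_m(x_i)$. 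By $\pivx{x_i}{u}{m}^c$, this sub-path lies in $\mathcal U_{x_i}$, so $v \in \mathcal U_{x_i} \cap \mathcal U_{x_{i+1}}^+$ and these two clusters belong to the same cluster of $B_{2m}(o)$. Iterating along the chain connects $\mathcal U_o$ to $\mathcal U_a$ inside $B_{2m}(o)$, hence $o \lr{B_{2m}(o)} a$ on $A$. Running the same argument for a chain from $o$ to $b$ and combining via Harris--FKG finally yields $\tau_{p,2m}(a,b) \ge (\theta(p)^2/2)^2 \ge \delta$ for $\delta$ small enough. The hardest step is precisely this gluing: the absence of global symmetries in $G$ prevents a direct matching of two neighbouring local big clusters, so one must work with two coupled scales ($m$ for uniqueness and $m+u$ for existence) in order to force the neighbour's big cluster to strictly cross $S_m(x_i)$ from the outside.
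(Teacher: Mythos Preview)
Your proof is correct and follows essentially the same chain-and-uniqueness strategy as the paper. The paper's execution is simpler in two respects worth noting. First, instead of localizing the connection event to $B_{m+u}(x)$, the paper uses the global event $B_{u/2}(x_i)\lr{}\infty$ directly (together with $a\lr{}\infty$, $b\lr{}\infty$); since an infinite cluster automatically crosses every sphere $S_m(x_j)$, the gluing becomes immediate and the two-scale machinery $(m,m+u)$ and the extra event $\pivx{x}{u}{m+u}^c$ are unnecessary --- note that $\pivx{x}{u}{m+u}\subset\pivx{x}{u}{m}$ anyway, so this event is redundant once you assume $\pivx{x}{u}{m}^c$. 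Second, the paper runs a single chain from $a$ to $b$ rather than two chains through $o$ glued by FKG, which avoids squaring the constant and sidesteps the issue you flag about $B_{m+u}(x_i)\subset B_{2m}$. Incidentally, that containment worry can be removed in your own argument: since $v\in B_{u/2}(x_{i+1})$ lies in $\mathcal U_{x_{i+1}}^+$ and hence its $B_m(x_{i+1})$-cluster already reaches $S_m(x_{i+1})$, you actually get $v\in\mathcal U_{x_i}\cap\mathcal U_{x_{i+1}}$, and both of these sets lie in $B_m(x_j)\subset B_{2m}(o)$ (using only $m\ge u^{1+\delta}$), so no adjustment of constants is needed.
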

	
	\begin{proof}
		Take $c$ and $\delta$ to satisfy the conclusion of Corollary~\ref{cor:1}. We may further assume that $\delta$ is small enough, so that the following inequality holds
		\begin{equation}
			\theta(p)^2 4^{-8(2/c)^{\delta}}\ge 9\delta.
			\label{eq:55}
		\end{equation}
		Let $a,b\in B_{r^{1+\delta}}$. We can find vertices $x_0=a,x_1,\dots,x_k=b$ in $B_m$ such that $k\leq 2\frac{\lfloor r^{1+\delta}\rfloor}{\lfloor r/2\rfloor}\le 8r^\delta$ and $\forall i<k,~d(x_i,x_{i+1})\le r/2$.
		Assume that $a\lr{}\infty$, $b\lr{}\infty$, and $B_{r/2}(x_i)\lr{}\infty$ for all $i \in \{ 1,\ldots, k-1\}$. Then there are two possibilities: either $a\lr{B_{2m}}b$, or there is some $i \in \{1,\ldots,k\}$ such that $\pivx{x_i} r m$ occurs. See Figure~\ref{fig:6}. Therefore, by the union bound, we get
		\begin{equation*}
			\mathrm P_p[a\lr{}\infty,~b\lr{}\infty,~B_{r/2}(x_i)\lr{}\infty \text{ for all } i]\leq \mathrm P_p[a\lr{B_{2m}}b] + \sum_{i=1}^{k} \mathrm P_p[\pivx{x_i} r m].
		\end{equation*}
		\begin{figure}
			\centering
			\includegraphics[scale=0.8]{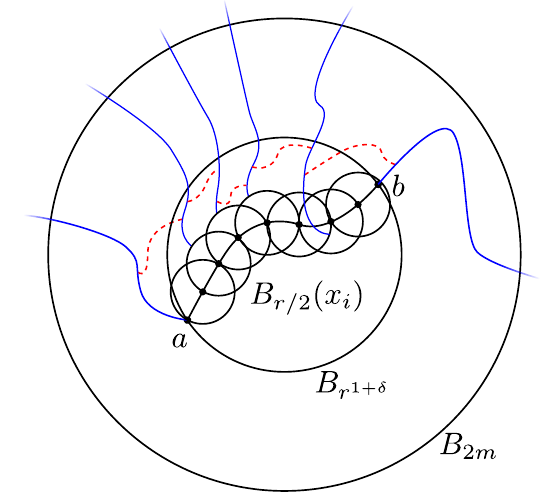}
			\caption{The event $\{a\lr{}\infty,~b\lr{}\infty,~B_{r/2}(x_i)\lr{}\infty\text{ for all }i\}$ is depicted in blue. Notice that if there is no $i$ such that $\pivx{x_i} r m$ holds, then we have the red dotted paths in the figure, thus creating an open path between $a$ and $b$ within $B_{2m}$.}
			\label{fig:6}
		\end{figure}%
		By our choice of $c$ and $\delta$, for every $i$, we have $\mathrm P_p[B_{r/2}(x_i)\lr{} \infty]\geq 1-\frac{1}{(cr/2)^\delta}$. 
		By the Harris--FKG Inequality, by transitivity of $G$, and because $\mathrm P_p$ is invariant under graph automorphisms, we have
		\begin{align*}
			\mathrm P_p[a\lr{B_{2m}}b] &\geq \mathrm P_p[a\lr{}\infty]\mathrm P_p[b\lr{}\infty](1-(cr/2)^{-\delta})^{8r^\delta}-k\mathrm P_p[\piv{r}{m}]\\
			&\geq \theta(p)^2 (1-(cr/2)^{-\delta})^{8r^\delta}-8r^\delta\mathrm P_p[\piv{r}{m}].
		\end{align*}
		Assuming that $r \ge r_0:= 2^{1+1/\delta}/c$ and that $\mathrm P_p[\piv{r}{m}] \le \frac{\delta}{r^\delta}$, we get
		\begin{equation*}
			\mathrm P_p[a\lr{B_{2m}}b] \ge \theta(p)^2 4^{-8(2/c)^{\delta}}-8\delta.
		\end{equation*}
		By \eqref{eq:55}, the proof is complete.
	\end{proof}
	
	We are now ready to prove Proposition~\ref{prop:4}, following the strategy described at the beginning of the section.
	
	\begin{proof}[Proof of Proposition~\ref{prop:4}]  Let $p>p_c$. Let $\delta\in (0,1/4)$ and $r_0\ge2$ be  as in Lemma~\ref{lem:13}. By Lemma~\ref{lem:12}, Proposition~\ref{prop:3} and the polynomial growth of $G$, we can fix $c_1=c_1(G,p)\ge 1$ such that for every $1< r\le m\le n/2$, we have
		\begin{equation}
			\label{eq:56}
			\forall q\ge p\quad  \mathrm P_q[\piv r n]\le  \frac{c_1r^{2d}m^dn^{-1/3}}{\min_{a,b\in S_r}\mathrm P_p[a\lr{B_{2m}}b]}.
		\end{equation}
		Applying the inequality above to $r=m=r_0$  allows us to choose  $n_0\ge \max(2(c_1/\delta^2)^{1/d},r_0^{1+\delta})$ such that
		\begin{equation}
			\label{eq:57}
			\forall q\ge p\quad \mathrm P_q[\piv {r_0} {n_0}]\le\frac\delta {r_0^{1/4}}. 
		\end{equation}
		Consider the sequences $(r_k)$ and $(n_k)$ defined by $r_{k+1}=r_k^{1+\delta}$, $n_{k+1}=n_k^{50d}$ for every $k\ge0$. By induction, we  will prove that for every $k\ge 0$,
		\begin{equation}
			\label{eq:58}
			\forall q\ge p\quad  \mathrm P_q[\piv {r_k} {n_k}]\le\frac\delta{n_k^{1/4}}. 
		\end{equation}
		This will conclude the proof of \eqref{eq:52} for any $\chi<\log(1+\delta)/\log(50d)$ along the sequence $(n_k)$. The statement for general $n$ is then obtained by initiating the sequence not necessarily at $n_0$ but at any value in the compact interval $[n_0,n_1]$.
		
		Let $k\ge 0$ and assume that \eqref{eq:58} holds. Since $n_k^{1/4}\ge r_k^{\delta}$, the quantity $\mathrm P_p[\piv {r_k} {n_k}]$ is at most $\delta/r_k^{\delta}$. Applying Lemma~\ref{lem:13} to $r=r_k$ and $m=n_k \ge r_k^{1+\delta}$  gives
		\begin{equation}
			\label{eq:59}
			\forall a,b \in  B_{r_{k+1}} \quad \mathrm P_p[a\lr{B_{2n_k}}b]\geq \delta.
		\end{equation}
		Equation~\eqref{eq:56} applied to $r=r_{k+1}$, $m=2n_k$ and $n=n_{k+1}$ finally gives
		\begin{equation}
			\forall q\ge p\quad \mathrm P_q[\piv{r_{k+1}}{n_{k+1}}]\leq \frac{c_1 r_{k+1}^{2d}(2n_k)^dn_{k+1}^{-1/3}}{\delta}\le \frac{\delta}{n_{k+1}^{1/4}},
		\end{equation}
		where we used $r_{k+1}=r_k^{1+\delta}\le n_k$ and $n_k\ge n_0\ge 2(c_1/\delta^2)^{1/d}$ in the last inequality.
	\end{proof}

	The bound on the uniqueness zone gives us the following  bound on the corridor function, which will be an important ingredient in the final proof of Proposition~\ref{prop:1}.  
	\begin{corollary}
		\label{cor:4}Let $G$ be a graph of polynomial growth, let $p>p_c$. Let $\{s(n)\}_{n\ge1}$
		be a sequence as in Proposition~\ref{prop:4}. Then, for every $n$  large enough, we have   
		\begin{equation}
			\label{eq:61}
			\kappa_p(s(n),n)\ge \theta(p)^2/2.
		\end{equation}
	\end{corollary}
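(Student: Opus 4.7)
The plan is to establish that for any path $\gamma$ of length $\ell \le s(n)$ with endpoints $x=o(\gamma)$ and $y=e(\gamma)$, the connection probability in the corridor $B_n(\gamma)$ is at least $\theta(p)^2/2$, by working inside the single ball $B_n(x)$, which is a subset of $B_n(\gamma)$ since $x\in \gamma$.

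The first step is to observe that, because $d(x,y)\le \ell\le s(n)$ and $s(n)\le n$ eventually, the vertex $y$ lies in $B_{s(n)}(x)$, and both $x$ and $y$ belong to $B_n(x)$. I will then show the one-sided inclusion
\begin{equation}
    \{x\lr{}\infty\}\subset\{x\lr{B_n(x)}\partial B_n(x)\},
\end{equation}
since any infinite open path from $x$ must exit the finite ball $B_n(x)$ at a first time, and the corresponding initial segment gives a path to $\partial B_n(x)$ whose edges have both endpoints in $B_n(x)$. The same applies to $y$ (since $y\in B_n(x)$), so $\mathrm P_p[y\lr{B_n(x)}\partial B_n(x)]\ge \theta(p)$ as well. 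By the Harris--FKG Inequality applied to these two increasing events,
\begin{equation}
    \mathrm P_p\bigl[x\lr{B_n(x)}\partial B_n(x),\; y\lr{B_n(x)}\partial B_n(x)\bigr]\ge \theta(p)^2.
\end{equation}

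Now I invoke the uniqueness zone via Proposition~\ref{prop:4}: on the event $U(s(n),n)$ centred at $x$ (whose complement $\mathsf{Piv}_x(s(n),n)$ has probability at most $cn^{-1/4}$ for a constant $c$ depending only on $p$ and $G$), all clusters in $B_n(x)$ intersecting $B_{s(n)}(x)$ and $\partial B_n(x)$ coincide. Both $x$ and $y$ lie in $B_{s(n)}(x)$, so on the intersection of the two events above with this uniqueness event, the clusters of $x$ and $y$ in $B_n(x)$ are the same, which gives $x\lr{B_n(x)} y$, and hence $x\lr{B_n(\gamma)} y$. Combining these bounds yields
\begin{equation}
    \mathrm P_p\bigl[x\lr{B_n(\gamma)} y\bigr]\ge \theta(p)^2-\mathrm P_p[\mathsf{Piv}_x(s(n),n)]\ge \theta(p)^2- c n^{-1/4},
\end{equation}
which is at least $\theta(p)^2/2$ once $n$ is large enough. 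Since this bound is uniform over $\gamma$ of length at most $s(n)$, taking the minimum concludes the proof.

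There is no serious obstacle here: the argument is a direct assembly of ingredients already in hand. The only thing to be careful about is verifying that the endpoints $x$ and $y$ lie in the small ball $B_{s(n)}(x)$ so that uniqueness in the annulus between $B_{s(n)}(x)$ and $\partial B_n(x)$ actually glues their clusters, and that $B_n(x)\subset B_n(\gamma)$, both of which follow immediately from $x=\gamma_0$ and $d(x,y)\le s(n)\le n$ for $n$ large.
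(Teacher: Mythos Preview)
Your proof is correct and follows essentially the same approach as the paper's own argument. The paper phrases things in terms of the ball $B_n$ centred at the fixed origin $o$ (implicitly using transitivity to reduce to paths with $o(\gamma)=o$), while you work directly with $B_n(x)$ for $x=o(\gamma)$; both arguments use the Harris--FKG Inequality to get the $\theta(p)^2$ lower bound on both endpoints reaching $\partial B_n(x)$, then subtract the probability of $\mathsf{Piv}(s(n),n)$ via Proposition~\ref{prop:4} to force the two clusters to coincide.
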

	
	\vspace{-15pt}
	
	\begin{proof}
		Notice that  $x\in B_{s(n)}$ is connected  to the origin by an open path in  $B_n$ if the following conditions hold:
		\begin{itemize}
			\item   both $o$ and $x$ are connected   to $\partial B_{n}$ and
			
			\item there is a unique cluster crossing from $s(n)$ to $n$.
		\end{itemize}
		By the Harris--FKG Inequality and the union bound, we have
		\begin{equation}
			\label{eq:63}
			\forall x\in B_{s(n)} \quad\mathrm P_{p}[o\lr{B_n} x]\ge \theta(p)^2-\mathrm P_p[\piv{s(n)}{n}]>\theta(p)^2/2,
		\end{equation}
		where the last inequality holds if $n$ is large enough, by Proposition~\ref{prop:4}. Since any corridor of length $s(n)$ and thickness $n$ contains the ball $B_n$, the equation above implies $\kappa_{p}(s(n),n)>\theta(p)^2/2$ for every $n$ large enough.  
	\end{proof}

	\begin{corollary}\label{cor:3}
		Let $G$ be a transitive graph of polynomial growth.  For all $p>p_c$, $\delta\in(0,1-p)$, and $k\ge 1$, there exists $n_0\ge1$ such that for every $n\ge m\ge n_0$,
		\begin{equation}
			\kappa_p(2m,n)> \displaystyle\frac1{\log^k m} \implies \kappa_{p+\delta}(2m \log^k m ,2n)\geq \theta(p)^2/2.\label{eq:60}
		\end{equation}
	\end{corollary}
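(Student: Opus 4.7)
Fix an arbitrary path $\Gamma$ of length at most $2m\log^k m$ with endpoints $x=x_0$ and $y=x_\ell$, and decompose $\Gamma$ into $\ell\le\log^k m$ consecutive subpaths $\gamma_1,\dots,\gamma_\ell$ of length at most $2m$, with joints $x_0,x_1,\dots,x_\ell$. The plan is to show $\mathrm P_{p+\delta}[x\lr{B_{2n}(\Gamma)}y]\ge\theta(p)^2/3$ for $m$ large, splitting the sprinkling budget $\delta$ into $\delta/3+\delta/3+\delta/3$ to be spent respectively on sharp-threshold boosting, sprinkled-uniqueness gluing at the joints, and endpoint connection.

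The first step will upgrade, via sharp threshold (Proposition~\ref{prop:8}), the input $\kappa_p(2m,n)>1/\log^k m$ into near-certain ball-to-ball crossings at parameter $p+\delta/3$. I will choose $s=\lceil\log^K m\rceil$ with $K=K(\delta,k)$ large enough so that $(cs)^{\delta/60}\ge\log^{2k+2}m$ for $m$ large, where $c$ is the constant from Proposition~\ref{prop:8}. Since $s\le n$ for $m$ large, Proposition~\ref{prop:8} applied to each event $\{x_{i-1}\lr{B_n(\gamma_i)}x_i\}$ will yield
\begin{equation*}
\mathrm P_{p+\delta/3}[B_s(x_{i-1})\lr{B_{n+s}(\gamma_i)}B_s(x_i)]\ge 1-1/\log^{2k+2}m.
\end{equation*}
These ball-to-ball events $F_i$ are increasing and take place inside $B_{n+s}(\gamma_i)\subset B_{2n}(\Gamma)$. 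The Harris--FKG inequality will then give $\mathrm P_{p+\delta/3}[\bigcap_i F_i]\ge 1-\ell/\log^{2k+2}m\ge 1-1/\log^{k+1}m$.

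The second step will convert this chain of ball-to-ball crossings into a single $(p+\delta)$-open path from $x$ to $y$, using the remaining $2\delta/3$ of sprinkling. At each joint $x_i$, Proposition~\ref{prop:4} applied at the scale $s$ will give $\mathrm P_{p+2\delta/3}[\piv{s(s)}{s}]\le cs^{-1/4}$, which is tiny for $K$ large; this provides uniqueness of the crossings of $B_s(x_i)$ from its core $B_{s(s)}(x_i)$ to $\partial B_s(x_i)$. At the endpoints, Corollary~\ref{cor:4} at parameter $p$ will connect $x$ (resp.\ $y$) to $\partial B_s(x)$ (resp.\ $\partial B_s(y)$) with probability at least $\theta(p)^2/2-o(1)$. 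A final Harris--FKG bound combining the chained ball-to-ball events, the uniqueness at each joint, the endpoint connections, and a sprinkling-based merging of the $(p+\delta/3)$-clusters touching each $B_s(x_i)$ into one $(p+\delta)$-cluster will yield $\mathrm P_{p+\delta}[x\lr{B_{2n}(\Gamma)}y]\ge\theta(p)^2/3$ for $m\ge n_0$ large enough.

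The hard part will be the gluing at the joints: the ball-to-ball clusters from $F_i$ and $F_{i+1}$ need not reach the tiny inner ball $B_{s(s)}(x_i)$, so the uniqueness event $U(s(s),s)$ does not directly merge them. To overcome this I plan to use the final $\delta/3$ of sprinkling to $(p+\delta)$-connect any two $(p+\delta/3)$-clusters reaching $\partial B_s(x_i)$, via a sprinkled-uniqueness argument at the polylogarithmic scale $s$ that combines Proposition~\ref{prop:4} with a Russo-type estimate; the polynomial uniqueness bound of Proposition~\ref{prop:4} ensures that only a few clusters compete for connection, making this final sprinkling step tractable.
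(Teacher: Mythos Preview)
Your overall strategy—use Proposition~\ref{prop:8} to upgrade the hypothesis $\kappa_p(2m,n)>1/\log^k m$ into near-certain ball-to-ball connections along a chain of joints, then glue at the joints via a uniqueness statement, and handle the endpoints by $\{x\lr{}\infty\}\cap\{y\lr{}\infty\}$—is exactly the paper's approach, and your first step is correct. The genuine gap is where you locate it yourself: the gluing. Your proposed remedy, ``a sprinkled-uniqueness argument at the polylogarithmic scale $s$ that combines Proposition~\ref{prop:4} with a Russo-type estimate'', is not a concrete plan. You would essentially need an analogue of Proposition~\ref{prop:7} at scale $s=\log^K m$, and you have verified neither of its hypotheses there; the phrase ``only a few clusters compete for connection'' does not by itself turn into a proof that they merge after a fixed amount of sprinkling.

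The paper sidesteps this entirely with a much simpler idea: choose the scales so that \emph{ordinary} uniqueness from Proposition~\ref{prop:4} applies directly, with no further sprinkling. First, the case $d(x,y)\le s(m)$ is handled immediately by Corollary~\ref{cor:4}. Otherwise, the joints $x_i$ are chosen along $\gamma$ at graph-distance at least $s(m)/2$ from one another (and at most $2m$, so their number is still $\le 2\log^k m$). The thickening radius is taken to be $t=s(\lfloor s(m)/2\rfloor)\asymp\exp(\log^{\chi^2}m)$, which grows faster than any power of $\log m$ and is therefore large enough for Proposition~\ref{prop:8} to yield $\mathrm P_{p+\delta}[B_t(x_i)\lr{C}B_t(x_{i+1})]\ge 1-1/\log^k m$. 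Now comes the point you are missing: because $d(x_i,x_{i\pm1})\ge s(m)/2$, every ball-to-ball cluster touching $B_t(x_i)$ automatically crosses all the way out to $\partial B_{s(m)/3}(x_i)$, and so do the clusters of $x$ and $y$ (they reach infinity). Hence one can invoke the \emph{large-scale} pivotal bound $\mathrm P_{p+\delta}[\pivx{x_i}{t}{s(m)/3}]\le c\,s(m)^{-1/4}\ll 1/\log^k m$ from Proposition~\ref{prop:4}, which forces all these clusters to coincide inside $B_{s(m)/3}(x_i)\subset B_{2n}(\gamma)$. The whole argument runs at the single parameter $p+\delta$ (as in Lemma~\ref{lem:13}); your three-way split of $\delta$ is unnecessary. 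In short, separating the joints by $s(m)/2$ lets the ball-to-ball clusters themselves supply the long arms that the uniqueness event needs—this replaces your unspecified sprinkled-uniqueness step by a one-line appeal to Proposition~\ref{prop:4}.
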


	The proof of Corollary~\ref{cor:3} is analogous to the proof of Lemma~\ref{lem:13}. However, here, we need to apply Proposition~\ref{prop:4} to get a quantitative estimate. 
	
	\begin{proof} 
		\sloppy Let $p$, $\delta$ and $k$ be as stated in Corollary~\ref{cor:3}, and assume that $\kappa_p(2m,n)\geq {\log^{-k} m}$. Let $\gamma$ be a path of length at most $2m \log^k m$  starting at some vertex $x$ and ending at some vertex $y$. Consider the sequence $\{s(j)\}$,   where $s(j)$ is defined as in Proposition~\ref{prop:4} at the parameter $p$. If the distance between $x$ and $y$ is smaller than $s(m)$, Corollary~\ref{cor:4} directly concludes that $x$ and $y$ are connected in $B_{n}$ with probability at least $\theta(p)^2/2$ (because $s(m)\le s(n)$). We can therefore focus on the case when $x$ and $y$ are at distance at least $s(m)$. In that case, we can find $M\le 2\log^k m$  points $x_i\in \gamma$ such that $x_1=x$, $x_M=y$ and  $s(m)/2\le d(x_i,x_{i+1})\le 2 m$.  Let $C$ be the corridor of thickness $2n$ around $\gamma$.  Set $t=s(\lfloor s(m)/3\rfloor)$. By our assumption on $\kappa_p(2m,n)$ and by Proposition~\ref{prop:8}, we have, for $m$ large enough,
		\begin{equation}
			\mathrm P_{p+\delta}\left[B_t(x_{i})\lr{C} B_t(x_{i+1})\right]\geq 1 -\frac{1}{(ct)^{\delta/20}}\geq 1-\frac{1}{\log^{k+1} m}
		\end{equation}
		for every $i$.
		If we assume that $x\lr{}\infty$, $y\lr{}\infty$ and $B_t(x_{i})\lr{C} B_t(x_{i+1})$ for every $i$, then either $x\lr{C} y$ or there exists $i$ such that $\pivx{x_i}{t}{s(m)/3}$ happens. The union bound thus yields
		\begin{equation}
			\mathrm P_{p+\delta}[x\lr{C} y]\ge \theta(p+\delta)^2\left(1-\frac{M-1}{\log^{k+1} m}\right)-\sum_{i=1}^M\mathrm P_{p+\delta} [\pivx{x_i}{t}{s(m)/3}].
		\end{equation}
		We conclude by observing that the sum converges to zero by Proposition~\ref{prop:4} and that $M\le 2\log^k(n)$.
	\end{proof}

	\section{Sharp threshold results via Hamming distance}
	\label{sec:sharp-thresh-hamming}
	
	This section is devoted to the proof of the following proposition.
	
	\begin{proposition}\label{prop:5}
		Let $G$ be a transitive graph of polynomial growth. Let $p>p_c$. There exists $n_0$ such that for every $n\ge m\ge n_0$,
		\begin{equation}
			\label{eq:65}
			\kappa_{p}(m,2n)\leq\theta(p)^2/2\implies  \mathrm P_p[B_m\lr{} \partial B_n]\ge 1-e^{-\log^3 n}.
		\end{equation}
	\end{proposition}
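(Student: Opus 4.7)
My plan combines two steps: first, extracting a constant-order lower bound on the desired crossing probability from the hypothesis; second, amplifying this constant into the claimed stretched-exponential bound via the section's new differential inequality involving Hamming distance on the hypercube.

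\emph{Step 1.} Let $\gamma$ be a path of length at most $m$ nearly realising the minimum in the definition of $\kappa_p(m,2n)$. By transitivity of $G$, translate $\gamma$ so that $o(\gamma)=o$. Polynomial growth implies amenability, and hence $\mathrm P_p$-a.s.\ uniqueness of the infinite cluster, so Harris--FKG yields
\[
\mathrm P_p[o\lr{}e(\gamma)]\ \ge\ \mathrm P_p[o\lr{}\infty,\ e(\gamma)\lr{}\infty]\ \ge\ \theta(p)^2.
\]
Combined with the hypothesis $\mathrm P_p[o\lr{B_{2n}(\gamma)}e(\gamma)]\le\theta(p)^2/2$, the event $\{o\lr{}e(\gamma),\ o\nlr{B_{2n}(\gamma)}e(\gamma)\}$ has probability at least $\theta(p)^2/2$. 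On this event, the open path between $o$ and $e(\gamma)$ must exit the corridor, so the open cluster of $o$ inside $B_{2n}(\gamma)$ reaches the internal boundary of the corridor, which (since $o\in\gamma$) lies at distance at least $2n-m\ge n$ from $o$. Consequently $\mathrm P_p[B_m\lr{}\partial B_n]\ge\theta(p)^2/2$.

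\emph{Step 2.} This is where the new ingredient of the section enters. I would establish and then use a general differential inequality for monotone events of the schematic form
\[
-\frac{d}{dq}\log \mathrm P_q[A^c]\ \ge\ c\cdot\mathrm E_q\!\left[d_H(\omega,A)\,\bigm|\,\omega\in A^c\right],
\]
where $d_H(\omega,A)$ is the Hamming distance from $\omega$ to $A$. Applied to $A=\{B_m\lr{}\partial B_n\}$, this Hamming distance on $A^c$ equals the minimum number of closed edges that must be opened to create a crossing, and it is bounded below by the size of the minimum closed cutset between $B_m$ and $\partial B_n$, for which Lemmas~\ref{lem:1} and~\ref{lem:4} provide good geometric control. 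Using the constant-order bound from Step 1 as the initial condition and integrating over a suitable parameter interval produces the stretched-exponential bound $\mathrm P_p[A^c]\le e^{-\log^3 n}$.

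\emph{Main obstacle.} The principal difficulty is that the hypothesis on $\kappa_p$ is a statement at the single parameter $p$, whereas a differential inequality naturally integrates information over an interval. The plan to overcome this is to exploit the monotonicity of $\kappa_q$ in $q$ to propagate a usable form of the hypothesis to all $q\le p$, and to ensure that the Hamming-distance lower bound of order $\log^3 n$ holds uniformly along the integration interval. Crafting the precise Hamming-distance differential inequality, and verifying that the integrated bound reaches the $\log^3 n$ rate, is the technical heart of the argument.
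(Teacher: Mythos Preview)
Your Step~1 is correct (and in fact you get distance $2n$, not $2n-m$, from $o$, since $B_{2n}(o)\subset B_{2n}(\gamma)$), but it plays no role in the paper's argument and cannot be amplified the way you propose.

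The genuine gap is in Step~2. You assert that the Hamming distance from $\omega\in A^c$ to $A=\{B_m\lr{}\partial B_n\}$ is bounded below by the size of the minimum closed cutset. This is false, and the inequality goes the wrong way: on $A^c$, the Hamming distance to $A$ is $\min_\gamma|\{e\in\gamma:\omega(e)=0\}|$, the minimum number of closed edges along \emph{some} path. If there is a single large closed cutset $\Pi$ but everything else is open, the minimum closed cutset has size $|\Pi|$, yet the Hamming distance is $1$. Lemmas~\ref{lem:1} and~\ref{lem:4} concern deterministic cutsets in $G$ and give no control on this quantity. Consequently you have no mechanism to produce a Hamming-distance lower bound of order $\log^3 n$, and integrating the differential inequality yields nothing beyond your constant from Step~1.

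The paper's route is different in substance, not just in detail. The correct object is the Hamming distance to the \emph{decreasing} event $\mathcal A=\{B_m\nlr{}\partial B_n\}$, which on $\mathcal A^c$ equals the maximum number of edge-disjoint open crossings (Menger). The whole point of the hypothesis $\kappa_p(m,2n)\le\theta(p)^2/2$ is that it manufactures \emph{many} such disjoint crossings: via the two-seed machinery (Lemma~\ref{lem:15}) one gets $\tau_{p,2n}(o,u)\le 2/3$ for some $u\in B_m$, and then Lemma~\ref{lem:16} produces a set $A\subset B_m$ with $|A|\ge\log^4 n$ whose points are pairwise poorly connected. The sharp-threshold Lemma~\ref{lem:14} converts this, at a slightly lower parameter $p-\delta>p_c$, into $\mathrm P_{p-\delta}[x\lr{B_n}y]\le\varepsilon_{2n}$ for all $x,y\in A$, while each $x\in A$ still satisfies $\mathrm P_{p-\delta}[x\lr{}\partial B_n]\ge\theta(p-\delta)$. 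Now Proposition~\ref{prop:6} applies: the expected number of distinct crossing clusters from $A$ is $\gtrsim\theta(p-\delta)|A|$, hence $\mathrm E_{p-\delta}[H_{\mathcal A}]\gtrsim\log^4 n$, and integrating the exponential-steepness inequality over $[p-\delta,p]$ gives the claimed bound. The missing idea in your proposal is precisely this construction of $\log^4 n$ ``independent'' starting points in $B_m$; without it, there is no source for a Hamming-distance lower bound that grows with $n$.
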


	The statement above may seem slightly counter-intuitive at a first look: we use some negative information (the function $\kappa_p$ is small) to obtain a large connection probability. Let us sketch the proof, which will be detailed in Section~\ref{sec:proof-prop}, and explain how this negative information can be used to our advantage.
	
	Due to the uniqueness of the infinite cluster and the Harris--FKG Inequality, the two-point function in the whole graph is uniformly lower bounded by $\theta(p)^2$. Assume that when we restrict the connections to a box, we get something substantially smaller, in the sense that
	\begin{equation}
		\label{eq:66}
		\kappa_p(m,2n) \le \theta(p)^2/2. 
	\end{equation}
	By reducing the parameter from $p$ to $p-\delta>p_c$ and using the sharp threshold results of Section~\ref{sec:sharp-threshold-via} below, we strengthen this bound as follows. For $k=\log^{10}(n)$, we show that two points $x$ and $y$ at distance~$\simeq m/k$ of each other always satisfy
	\begin{equation}
		\label{eq:67}
		\mathrm P_{p-\delta}[x\lr{B_{2n}} y]\le \frac1{k}.
	\end{equation}
	Now, consider $k$ points in $B_m$ at distance at least $m/k$ of each other. On the one hand, on average, a proportion at least $\theta(p-\delta)$ of them are connected to the boundary of $B_{2n}$. On the other hand, the estimate \eqref{eq:67} implies that all these points typically belong to different clusters of $B_{2n}$, which forces the paths that connect them to $\partial B_n$ to be disjoint. It is at this point that our ``negative'' assumption on $\kappa_p$, combined with the ``positive'' assumption that $p>p_c$, yields a ``positive'' statement regarding connectivity of our percolation process: the expected number of disjoint paths from $B_m$ to $\partial B_{2n}$ is $\gtrsim  k$.
	From this estimate, a well-known differential inequality involving the Hamming distance on the hypercube guarantees that, at parameter $p=(p-\delta)+\delta$, we have $\mathrm P_p[B_m\lr{} \partial B_{2n}]\geq 1-e^{-\delta k}$. This concludes the proof.
	
	In Section~\ref{sec:sharp-threshold-via}, we present this new Hamming distance argument in a more general framework, since we believe it can have further applications.
	
	\subsection{Connectivity bounds via Hamming distance}
	\label{sec:sharp-threshold-via}
	
	In the current Section~\ref{sec:sharp-threshold-via}, contrary to elsewhere in the paper, $G$ denotes any finite connected graph with vertex set $V(G)$ and edge set $E(G)$. Besides, $p$ is arbitrary in $[0,1]$ and $\mathrm P_p$ stands for the Bernoulli bond  percolation measure of parameter $p$ on $\{0,1\}^{E(G)}$.
	
	\begin{proposition} 
		\label{prop:6} Let $A,B\subset V(G) $. Let $p\in[0,1]$ and $\theta>0$. Assume that
		\begin{equation}
			\label{eq:68}
			\min_{x\in A} \mathrm P_p[x\lr{}B]\geq\theta\geq 2|A|\cdot\max_{\substack{x,y\in A\\x\neq y}} \mathrm P_p[x\lr{}y].
		\end{equation}
		Then, for every $\delta\in(0,1-p]$, we have
		\begin{equation}
			\label{eq:69}
			\mathrm P_{p+\delta}[A \lr{} B]\ge 1-e^{-2\delta\theta|A|}.
		\end{equation}
	\end{proposition}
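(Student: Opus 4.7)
The plan is to treat the target as a sharp-threshold statement and deduce it from the classical Russo-formula differential inequality for increasing events. Set $\mathcal{E}:=\{A\lr{}B\}$ and let $M(\omega)$ be the number of closed pivotal edges for $\mathcal{E}$ in $\omega$, i.e.\ the number of edges $e$ closed in $\omega$ with $\omega\cup\{e\}\in\mathcal{E}$. Since closed pivotals exist only on $\mathcal{E}^c$, and the status of an edge is independent of its pivotality, Russo's formula yields
\begin{equation}
-\frac{d}{dp'}\log \mathrm P_{p'}[\mathcal{E}^c]\ =\ \frac{\mathbb E_{p'}[M\mid\mathcal{E}^c]}{1-p'}.
\end{equation}
Integrating between $p$ and $p+\delta$ and using $\int_p^{p+\delta}(1-q)^{-1}dq\ge \delta$, the proposition reduces to the uniform lower bound
\begin{equation}\label{eq:hamming-goal}
\forall p'\in[p,p+\delta],\qquad \mathbb E_{p'}[M\mid\mathcal{E}^c]\ \ge\ 2(1-p')\theta|A|.
\end{equation}

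The content of the proof then lies in establishing \eqref{eq:hamming-goal}. I would proceed by decomposing pivotal edges cluster by cluster. On $\mathcal{E}^c$ every closed pivotal edge joins some $\omega$-cluster $\mathcal C\in\mathcal K(A)$ touching $A$ to some disjoint $\omega$-cluster $\mathcal C'\in\mathcal K(B)$ touching $B$, so $M=\sum_{\mathcal C\in\mathcal K(A)}M_{\mathcal C}$ is a disjoint sum. The assumption $\mathrm P_p[x\lr{}y]\le\theta/(2|A|)$, combined with the BK-style bound $\mathrm P_p[\{x\lr{}B\}\cap\{y\lr{}B\}]\le \mathrm P_p[\{x\lr{}B\}\circ\{y\lr{}B\}]+\mathrm P_p[x\lr{}y]$ and monotonicity (to propagate the bound from $p$ to $p'\ge p$ via the coupling $\omega_p\subset\omega_{p'}$), ensures that the expected number of vertices of $A$ lying in distinct clusters of $\mathcal K(A)$ whose deficit to $\mathcal K(B)$ is exactly one closed edge is at least $2\theta|A|$. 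Each such vertex contributes at least one closed pivotal edge to $M$, yielding \eqref{eq:hamming-goal}.

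The main obstacle is the pivotal-edge counting in \eqref{eq:hamming-goal}: one must convert the \emph{probabilistic} disjointness of the attempts $\{x\lr{}B\}$ into a \emph{deterministic} disjointness of edge contributions to $M$. The factor $2$ in the exponent $e^{-2\delta\theta|A|}$ is exactly the slack afforded by the hypothesis $2|A|\max_{x\neq y}\mathrm P_p[x\lr{}y]\le\theta$: the BK-style overlap corrections from pairs $\{x,y\}\subset A$ consume exactly half of the $2\theta|A|$ budget, leaving $\theta|A|$ clusters whose boundary contributions establish the required lower bound, with the remaining factor of $2$ recovered by careful bookkeeping of the $(1-p')$ normalisations throughout the derivative structure.
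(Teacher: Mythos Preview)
Your Russo-formula setup is correct, but the heart of the argument---the lower bound~\eqref{eq:hamming-goal}---is not established, and two concrete obstacles stand in the way.

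\emph{First}, the monotonicity claim is backwards. You need the bound at every $p'\in[p,p+\delta]$, but your hypotheses live at $p$. You invoke monotonicity ``to propagate the bound from $p$ to $p'\ge p$'', yet the relevant inequality $\mathrm P_p[x\lr{}y]\le\theta/(2|A|)$ is an \emph{upper} bound on an \emph{increasing} quantity: it does not propagate to $p'>p$. There is nothing preventing $\mathrm P_{p'}[x\lr{}y]$ from being close to $1$ for $p'$ slightly above $p$.

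\emph{Second}, and more fundamentally, your quantity $\mathbb E_{p'}[M\mid\mathcal{E}^c]$ is a conditional expectation on the negative event $\{A\nlr{}B\}$. On this event no vertex of $A$ reaches $B$, so the hypothesis $\min_{x\in A}\mathrm P_p[x\lr{}B]\ge\theta$ gives no direct control. The sentence about ``vertices of $A$ lying in distinct clusters whose deficit to $\mathcal K(B)$ is exactly one closed edge'' does not parse into a rigorous bound; in particular, note that $2\theta|A|$ can exceed $|A|$, so it cannot be a lower bound on a count of vertices in $A$. The BK-style decomposition you sketch controls overlaps among the events $\{x\lr{}B\}$, but it says nothing about the number of closed pivotal edges once you condition on none of those events occurring.

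The paper's argument sidesteps both obstacles by working with the Hamming distance $H_\mathcal{A}$ to the \emph{decreasing} event $\mathcal{A}=\{A\nlr{}B\}$ rather than with closed pivotals. The differential inequality $\frac{d}{dp}(-\log\mathrm P_p[\mathcal{A}])\ge 4\mathrm E_p[H_\mathcal{A}]$ involves the \emph{unconditional} expectation, and $H_\mathcal{A}$ is an increasing function of $\omega$, so a lower bound at $p$ suffices for the whole interval. Moreover, $H_\mathcal{A}(\omega)$ is pointwise at least the number of disjoint clusters crossing from $A$ to $B$, and this count is bounded below in expectation by the elementary inclusion--exclusion
\[
\sum_{x\in A}\mathbf 1[x\lr{}B]-\sum_{x\neq y\in A}\mathbf 1[x\lr{}y],
\]
whose expectation is at least $\theta|A|-|A|^2\cdot\frac{\theta}{2|A|}=\theta|A|/2$ directly from the hypotheses. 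Plugging $\mathrm E_p[H_\mathcal{A}]\ge\theta|A|/2$ into the integrated inequality yields $e^{-2\delta\theta|A|}$ with no conditioning and no need to track the hypotheses beyond the single parameter $p$.
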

	
	\begin{remark} The proposition above   also applies to FK-percolation measures with cluster weight $q\ge1$, and more generally to measures for which the ``exponential steepness'' property of \cite[Section 2.5]{MR2243761} holds. 
	\end{remark}
	Before proving Proposition~\ref{prop:6}, we recall a general inequality for  monotone events.  The \defini{Hamming distance} from a configuration $\omega$ to an event $\mathcal A$ is defined by 
	\begin{equation}
		H_{\mathcal A}(\omega)=\inf\{H(\omega,\omega'):\omega'\in \mathcal A\},
	\end{equation}
	where $H(\omega,\omega')=\sum_{e\in E(G)}|\omega_e-\omega'_e|$ is the usual Hamming distance on the hypercube $\{0,1\}^{E(G)}$. When $\mathcal A$ is decreasing, one can interpret $H_{\mathcal A}(\omega)$ as the minimal number of edges in $\omega$ that need to be closed for the event  $\mathcal A$ to occur. Furthermore,  the Hamming distance provides exponential bounds on the variation  of $\mathrm P_p[\mathcal A]$ relative to $p$ (see \cite[Theorem 2.53]{MR2243761}): for every decreasing event $\mathcal A$ and every $p\in (0,1)$, we have
	\begin{equation}
		\label{eq:70}
		\frac{d}{dp}\left(- \log \mathrm P_p[\mathcal{A}]\right)\ge 4\mathrm E_p[H_\mathcal{A}]. 
	\end{equation}
	By integrating the equation above and using that $H_\mathcal{A}$ is increasing in $\omega$, we get that for every $p\in [0,1]$ and every $\delta\in[0,1-p]$,
	\begin{equation}
		\mathrm P_{p+\delta}[\mathcal{A}]\le e^{-4\delta\mathrm E_p[H_\mathcal{A}]}	\mathrm P_{p}[\mathcal{A}]\le e^{-4\delta\mathrm E_p[H_\mathcal{A}]}.\label{eq:71}
	\end{equation}

	\begin{proof}[Proof of Proposition~\ref{prop:6}]
		Let $A,B\subset V(G)$, let $p\in [0,1]$ and $\delta\in [0,1-p]$.  Applying  \eqref{eq:71} to the decreasing event $\mathcal A=\{A\nlr{} B\}$, we get
		\begin{equation}
			\label{eq:72}
			\mathrm P_{p+\delta}[A\lr{}B]\ge 1-e^{-4\delta\mathrm E_p[H_{\mathcal A}]}.
		\end{equation}
		The Hamming distance $H_\mathcal{A}$ is clearly at least\footnote{Actually, if $A$ and $B$ are disjoint, Menger's Theorem states that this is an equality \cite[Corollary 3.3.5]{diestel2017graph} --- but we only need the easy inequality.} the maximal number of disjoint open paths from $A$ to $B$.
		In particular, $H_\mathcal{A}$ is larger than the number of disjoint clusters intersecting both $A$ and $B$.  By inclusion-exclusion, this number of crossing clusters can be lower bounded by
		\begin{equation}
			\label{eq:73}
			\sum_{x\in A}\mathbf 1[x\lr{} B]-\sum_{\substack{x,y\in A\\x\neq y}} \mathbf 1[x\lr{} y]. 
		\end{equation}
		Fixing $\theta$ as in \eqref{eq:68} and taking the expectation above, we get
		\begin{align}
			\label{eq:74}
			\mathrm E_p[H_\mathcal{A}]&\ge  \sum_{x\in A}\mathrm P_p[x\lr{} B]-\sum_{\substack{x,y\in A\\x\neq y}}\mathrm P_p[x\lr{}y] \ge \theta|A|/2.
		\end{align}
		Plugging the estimate above in \eqref{eq:72} completes the proof.  
	\end{proof}
	
	\subsection{Seeds and two-seed function}
	\label{sec:properties-seeds}
	
	The notation $G$ now recovers its initial meaning and denotes once again a transitive graph of polynomial growth with $d\ge 2$.
	In this section, we fix $p>p_c$. Let $\chi=\chi(p) \in (0,1)$ be as in Proposition~\ref{prop:4}. For every $n$, we define
	\begin{equation}
		\label{eq:75}
		\sigma(n)=\exp(\log^{\chi^3} n) \quad\text{and} \quad  t(n)=\exp(\log^{\chi^2} n).
	\end{equation}
	For every positive integer $n$ and every vertex $x$, set
	\begin{equation}
		\label{eq:76}
		S_n=B_{\sigma(n)}\quad\text{and}\quad S_n(x)=B_{\sigma(n)}(x).
	\end{equation}
	Following the terminology introduced by Grimmett and Marstrand \cite{grimmett1990supercritical}, we call $S_n(x)$ the \defini{seed} of $x$. An important property of seeds is that they are connected to infinity with high probability.  For every $n\ge1$, define
	\begin{equation}
		\varepsilon_n=\frac{1}{\log^{10} n}.
	\end{equation}
	Since  $\sigma(n)$ is asymptotically  larger than arbitrarily large powers of $\log n$, Corollary~\ref{cor:1} implies that, for   every $n$ large enough, we have
	\begin{equation}
		\label{eq:77}
		\mathrm P_p[S_n\lr{}\infty]\ge 1-\tfrac1{100}\varepsilon_n. 
	\end{equation}
	
	Another important property of seeds is that they can be used to ``glue'' clusters.  Intuitively, if two large clusters of $B_n$ touch a certain seed $S_n(x)$, then the local uniqueness around $S_n(x)$ implies that they must be connected within $B_n$ --- which means that the two clusters are equal. Formally, we will use the following  upper bound on the probability that two distinct clusters reach a fixed seed, provided by Proposition~\ref{prop:4}. For every $n$ large enough we have
	\begin{equation}
		\label{eq:78}
		\mathrm P_p[\mathsf{Piv}(\sigma(n),t(n))]\le\tfrac1{100}\varepsilon_n.
	\end{equation}
	This follows from Proposition~\ref{prop:4}  together with the  observations that $\sigma(n)=\exp(\log^\chi(t(n))$ and $c\cdot t(n)^{-1/4}\le \tfrac1{100}\varepsilon_n$ for $n$ large enough. Similarly, we also have
	\begin{equation}
		\label{eq:79}
		\mathrm P_p[\mathsf{Piv}(3t(n),n/2)]\le \tfrac1{100} \varepsilon_n,
	\end{equation}
	using that $3t(n)\le\exp(\log^\chi(n/2))$ for $n$ large.
	
	\begin{remark}
		In other works, ``seed'' may refer to other constructions that guarantee a high probability of connection to infinity and that can be used to glue clusters. In \cite{grimmett1990supercritical}, this is done by defining a seed to be a large fully open box. In \cite{MR3630298}, one takes advantage of the fact that if an exploration reaches some vertex, then there is a long open path leading to it --- in particular, see \cite[Lemma~3.6]{MR3630298}.
	\end{remark}
	
	We define the \defini{two-seed function} by 
	\begin{equation}
		\tau_{p,n}(x,y)=\mathrm P_p[S_n(x)\lr{B_{n}} S_n(y)]
	\end{equation}
	for every $x,y\in B_n$. Notice that for $n$ large enough, we have $\tau_{p,n}(x,y)=1$ whenever $x$ and $y$ are neighbours in $B_n$. As we will prove in Lemma~\ref{lem:15}, the two-seed function shares some features with the standard two-point function. One main advantage of replacing points by seeds is that we can make use of the following  sharp threshold phenomenon.
	
	\begin{lemma}\label{lem:14}
		Let $\delta\in (0,p-p_c)$. For every $n$ large enough, for every $x,y\in B_n$, we have
		\begin{equation}
			\label{eq:82}
			\mathrm P_{p-\delta}[x\lr{B_n}y]\ge \varepsilon_{2n}\implies  \tau_{p,2n}(x,y)\ge 1-\varepsilon_{2n}.
		\end{equation}
	\end{lemma}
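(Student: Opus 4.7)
The plan is to deduce this lemma as a direct application of Proposition~\ref{prop:8}, where the seed thickness $\sigma(2n)$ is the crucial parameter that makes the argument work. The point is that $\sigma(2n) = \exp(\log^{\chi^3}(2n))$ grows faster than any polynomial in $\log(2n)$, while the target failure probability $\varepsilon_{2n} = 1/\log^{10}(2n)$ is only polylogarithmically small. This provides plenty of room to fit the ``thickening parameter'' $s$ of Proposition~\ref{prop:8} between a suitable polylog of $n$ and $\sigma(2n)$.

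Concretely, I would first fix some $\eta > 0$ small enough that $p - \delta \in (\eta, 1-\eta)$ and $\delta < 1 - \eta - (p - \delta)$, so that Proposition~\ref{prop:8} applies at parameter $p - \delta$ with sprinkling $\delta$. Let $c_0$ denote the constant produced by Proposition~\ref{prop:8} for this choice of $\eta$. Then, for $x, y \in B_n$, set $A = \{x\}$, $B = \{y\}$, $C = B_n$, and choose
\begin{equation}
s = s(n) := \lceil \log^{400/\delta}(2n) \rceil.
\end{equation}
For $n$ large enough, we have $(c_0 s)^{\delta/20} \ge \log^{10}(2n) = 1/\varepsilon_{2n}$, so the hypothesis $\mathrm P_{p-\delta}[x \lr{B_n} y] \ge \varepsilon_{2n}$ ensures the left-hand side of the implication in Proposition~\ref{prop:8} at parameter $p-\delta$.

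Applying Proposition~\ref{prop:8} then gives
\begin{equation}
\mathrm P_p\bigl[B_s(x) \lr{(B_n)_s} B_s(y)\bigr] \ge 1 - \frac{1}{(c_0 s)^{\delta/20}} \ge 1 - \varepsilon_{2n}.
\end{equation}
The conclusion follows once we verify that $B_s(x) \subset S_{2n}(x)$, that $B_s(y) \subset S_{2n}(y)$, and that $(B_n)_s \subset B_{2n}$. The first two inclusions require $s \le \sigma(2n) = \exp(\log^{\chi^3}(2n))$, which holds for $n$ large because $\sigma(2n)$ dominates every polylogarithm of $2n$. The third inclusion requires $s \le n$, which is immediate for $n$ large. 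Substituting these inclusions into the displayed inequality yields $\tau_{p,2n}(x,y) = \mathrm P_p[S_{2n}(x) \lr{B_{2n}} S_{2n}(y)] \ge 1 - \varepsilon_{2n}$.

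There is essentially no obstacle here beyond bookkeeping: the lemma is an immediate consequence of the sharp threshold result of Section~\ref{sec:sharp-thresh-seed}, and the definitions of $\sigma$ and $\varepsilon_n$ were tuned precisely so that such a ``$s$ sandwiched between polylog and $\sigma$'' argument works. The only mildly subtle point is ensuring the parameters $p-\delta$ and $\delta$ satisfy the hypotheses of Proposition~\ref{prop:8}, which is automatic since $p - \delta > p_c > 0$ and we may assume $p < 1$.
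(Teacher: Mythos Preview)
Your proof is correct and follows essentially the same approach as the paper: apply Proposition~\ref{prop:8} with $A=\{x\}$, $B=\{y\}$, $C=B_n$, and a thickening parameter $s$ large enough that $(cs)^{\delta/20}\ge 1/\varepsilon_{2n}$ yet small enough that the thickened sets sit inside $S_{2n}(x)$, $S_{2n}(y)$, $B_{2n}$. The only cosmetic difference is that the paper takes $s=\sigma(2n)$ directly (so the thickened singletons \emph{equal} the seeds), whereas you choose a smaller polylogarithmic $s$ and then invoke the inclusions $B_s(x)\subset S_{2n}(x)$ and $(B_n)_s\subset B_{2n}$; both choices work for the same reason, namely that $\sigma(2n)$ dominates every power of $\log(2n)$.
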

	\begin{proof}
		It is a direct consequence of Proposition~\ref{prop:8}, together with the observation that for all $\delta>0$, $c>0$, for every $n$ large enough, we have
		$(c\sigma(2n))^{\delta/20}\geq \varepsilon_{2n}$.
	\end{proof}

	\begin{lemma}
		\label{lem:15}
		For every $n$ large enough, for all $x,y,z\in B_{n/2}$, we have
		\begin{align}
			\label{eq:83}
			& \tau_{p,n}(x,z)\ge  \tau_{p,n}(x,y) \tau_{p,n}(y,z)-\varepsilon_n,\\ 
			\label{eq:84}
			&\mathrm P_p[x\lr{B_n} y]\ge \theta(p)^2\cdot   \tau_{p,n}(x,y)-\varepsilon_n.	
		\end{align}
	\end{lemma}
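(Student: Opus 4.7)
The plan is to derive both inequalities from the Harris--FKG Inequality combined with the translation-invariant local uniqueness bounds \eqref{eq:77}--\eqref{eq:78}: the seed $S_n(y)$ (and, for \eqref{eq:84}, also $S_n(x)$) will act as a gluing gadget that forces two distinct $B_n$-clusters touching it, and having enough local extent, to coincide. Each exceptional event I use will have probability at most a small constant times $\varepsilon_n$.

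For \eqref{eq:83}, set $E_{xy}=\{S_n(x)\lr{B_n} S_n(y)\}$ and $E_{yz}=\{S_n(y)\lr{B_n} S_n(z)\}$. These are increasing events, so FKG gives $\mathrm P_p[E_{xy}\cap E_{yz}]\ge \tau_{p,n}(x,y)\,\tau_{p,n}(y,z)$. I will then show that on the intersection $E_{xy}\cap E_{yz}\cap \pivx{y}{\sigma(n)}{t(n)}^c$, the $B_n$-clusters of $S_n(x)$ and $S_n(z)$ coincide, yielding $S_n(x)\lr{B_n} S_n(z)$. Indeed, each of these clusters touches $S_n(y)$; generically it extends to a point of $S_n(x)$ (resp.\ $S_n(z)$) lying outside $B_{t(n)}(y)$, thereby inducing a sub-cluster inside $B_{t(n)}(y)$ that meets both $S_n(y)=B_{\sigma(n)}(y)$ and $\partial B_{t(n)}(y)$; the uniqueness event forces these two sub-clusters to be equal, and hence the two $B_n$-clusters to be equal. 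Combining with the bound $\mathrm P_p[\pivx{y}{\sigma(n)}{t(n)}]\le \varepsilon_n/100$ from \eqref{eq:78} and translation-invariance gives the desired additive error.

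For \eqref{eq:84}, I will apply FKG to the three increasing events $E_{xy}$, $\{x\lr{}\infty\}$ and $\{y\lr{}\infty\}$, obtaining probability at least $\theta(p)^2\,\tau_{p,n}(x,y)$. On the intersection with both local uniqueness events $U_x:=\pivx{x}{\sigma(n)}{t(n)}^c$ and $U_y:=\pivx{y}{\sigma(n)}{t(n)}^c$, I will argue that $x\lr{B_n} y$: since $x\in B_{n/2}$ and $n$ is large, $B_{t(n)}(x)\subset B_n$, so the cluster of $x$ in $B_n$ reaches $\partial B_{t(n)}(x)$ (it contains a $p$-infinite path); a sub-cluster in $B_{t(n)}(x)$ of the $S_n(x)$-to-$S_n(y)$ path also meets $S_n(x)$ and $\partial B_{t(n)}(x)$; by $U_x$ they coincide, placing $x$ in the same $B_n$-cluster as this path. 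A symmetric argument around $y$ closes the loop, and a union bound over the exceptional events costs at most $\varepsilon_n/50$.

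The main obstacle, present in both parts, is the clause ``generically extends outside $B_{t(n)}(y)$'' (respectively $B_{t(n)}(x)$): this is immediate when the two relevant centres are at distance greater than $t(n)+\sigma(n)$, but not automatic otherwise. In the degenerate close regime, I will argue separately that $\tau_{p,n}(\cdot,\cdot)$ is itself within $\varepsilon_n$ of $1$: combining \eqref{eq:77} (each seed connects to infinity), the Burton--Keane uniqueness of the infinite cluster on the amenable graph $G$, and the local uniqueness \eqref{eq:78} applied at a scale slightly larger than $t(n)$, one produces a connection between the two close seeds inside $B_n$ with probability at least $1-\varepsilon_n$, after which both displayed inequalities become essentially trivial. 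Managing the bookkeeping between the well-separated and close subregimes, while keeping the additive error below $\varepsilon_n$, is where the bulk of the technical care will go.
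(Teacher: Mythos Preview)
Your approach is essentially the paper's: FKG plus local uniqueness via the $\mathsf{Piv}$ bounds, with a case split on the pairwise distances between seed centres. Two refinements are worth noting. First, the ``close regime'' is not a single case: when exactly one of $d(x,y),d(y,z)$ is below $2t(n)$ (the paper's Case~2), you cannot conclude $\tau(x,z)\ge 1-\varepsilon_n$ directly, since $x$ and $z$ may be far apart; instead you show $\tau(x,z)\ge\tau(x,y)-\varepsilon_n$ by a two-step gluing using both \eqref{eq:78} and \eqref{eq:79}. Second, you do not need Burton--Keane here: the quantitative local-uniqueness bound \eqref{eq:79} (that is, $\mathrm P_p[\mathsf{Piv}(3t(n),n/2)]\le\tfrac{1}{100}\varepsilon_n$) is precisely what forces two seeds within $B_{3t(n)}(y)$, each connected to $\partial B_n$, to lie in the same $B_n$-cluster, and your vague reference to ``\eqref{eq:78} applied at a scale slightly larger than $t(n)$'' should be replaced by an explicit invocation of \eqref{eq:79}.
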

	
	\begin{proof} We begin with the proof of \eqref{eq:83}. Without loss of generality, we may and will assume that $d(x,y)\ge d(y,z)$. We distinguish three different cases. For readability, we drop the indices $p$ and $n$ from the notation $\tau_{p,n}$ in this proof.
		
		\begin{description}
			\item[Case 1:] $d(x,y)\ge d(y,z)\ge 2t(n)$.
			
			By the Harris--FKG Inequality, we have    
			\begin{equation}
				\tau(x,y)\tau(y,z)\le \mathrm P_p[S_n(x)\lr{B_n} S_n(y),\,S_n(y)\lr{B_n} S_n(z) ].\label{eq:28}
			\end{equation}
			When there exist a cluster connecting $S_n(x)$ to $S_n(y)$ and  a cluster  connecting $S_n(y)$ to $S_n(z)$, then either these two clusters are connected together within $B_{t(n)}(y)$, or we observe two disjoint clusters crossing from $S_n(y)$ to the boundary of $B_{t(n)}(y)$.  Therefore, by the union bound
			\begin{equation}
				\label{eq:85}
				\tau(x,y)\tau(y,z)\le \tau(x,z)+\mathrm P_p[\mathsf{Piv}(\sigma(n), t(n))]\overset{\eqref{eq:78}}\le \tau(x,z)+\varepsilon_n.
			\end{equation}
			
			\item[Case 2:] $d(x,y)\ge 2t(n)> d(y,z)$.
			
			The probability that the seeds $S_n(z)$ and $S_n(y)$ are both connected to infinity is larger than $1-\tfrac1{50}\varepsilon_n$. Therefore, we have  
			\begin{equation}
				\label{eq:86}
				\tau(x,y)-\tfrac1{50}\varepsilon_n\le\mathrm P_p[S_n(x)\lr{B_n}S_n(y), S_n(y)\lr{} \partial B_n, S_n(z)\lr{} \partial B_n].
			\end{equation}
			If both $S_n(y)$ and  $S_n(z)$ are connected to $\partial B_n$, then either there exists a cluster in $ B_n$ that intersects $S_n(y)$,  $S_n(z)$ and $\partial B_n$,  or there exist two disjoint clusters crossing from $B_{3t(n)}(y)$ to $\partial B_{n/2}(y)$. Therefore, using the clusters inside $B_{t(n)}(y)$ as in Case 1, we have
			\begin{align}
				\label{eq:87}
				\tau(x,y)-\tfrac1{50}\varepsilon_n\le \tau(x,z)+\mathrm P_p[\mathsf{Piv}(\sigma(n), t(n))]+\mathrm P_p[\mathsf{Piv}(3t(n), n/2)]\\ \overset{\eqref{eq:78}+\eqref{eq:79}}\le  \tau(x,z)+\tfrac1{50}\varepsilon_n,\qquad \qquad\qquad \qquad
			\end{align}
			which implies $\tau(x,z)\ge \tau(x,y)-\varepsilon_n$.
			
			\item[Case 3:]   $2t(n)>d(x,y)\ge d(y,z)$.
			
			In this case, we use that
			$1-\tfrac1{50}\varepsilon_n\le \mathrm P_p[S_n(x)\lr{} \partial B_n,\,S_n(z)\lr{} \partial B_n]$. Reasoning as in Case~2, we get
			\begin{equation}
				\label{eq:88}
				1-\tfrac1{50}\varepsilon_n\le  \tau(x,z) + \mathrm P_p[\mathsf{Piv}(3t(n), n/2)]\overset{~\eqref{eq:79}}{\le}  \tau(x,z)+\tfrac1{100}\varepsilon_n,
			\end{equation}
			which implies $\tau(x,z)\ge 1-\varepsilon_n$.
		\end{description}
		
		For the proof of \eqref{eq:84}, we proceed similarly.  If $d(x,y)\ge 2t(n)$, we first use the Harris--FKG Inequality to show $$\theta(p)^2\tau(x,y)\le \mathrm P_p[x\lr{}\partial B_n,y\lr{}\partial B_n, S_n(x)\lr{B_n} S_n(y)].$$ If  the event estimated on the right-hand side occurs, then either $x$ is connected to $y$ or there is no local uniqueness around $x$ or around $y$. Therefore,
		\begin{equation}
			\label{eq:89}
			\theta(p)^2\tau(x,y)\le \mathrm P_p[x\lr{B_n} y]+2\mathrm P_p[\mathsf{Piv}(\sigma(n),t(n))]\overset{\eqref{eq:78}}\le     \mathrm P_p[x\lr{B_n} y]+\varepsilon_n.
		\end{equation}
		If $d(x,y)<2t(n)$, using the estimate \eqref{eq:79}, we directly get that $\mathrm P_p[x\lr{B_n}y]\ge \theta(p)^2-\varepsilon_n\ge \theta(p)^2\tau(x,y)-\varepsilon_n $.  
	\end{proof}

	\begin{lemma}
		\label{lem:16}
		For every  $n$ large enough, the following holds. For every $u\in B_{n/2} $  satisfying
		\begin{equation}
			\tau_{p,n}(o,u)\le 2/3,\label{eq:98}
		\end{equation}
		there exists a set $A\subset  B_{d(o,u)}$  of cardinality at least $\log^4n$ such that
		\begin{equation}
			\label{eq:99}
			\forall x,y\in A\quad    \tau_{p,n} (x,y)\le 1-\varepsilon_{n}.
		\end{equation}
	\end{lemma}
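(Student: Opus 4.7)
The plan is to extract $A$ from a geodesic $\gamma = (v_0, v_1, \ldots, v_L)$ joining $o = v_0$ to $u = v_L$ of length $L = d(o,u)$. Because the whole geodesic lies inside $B_L \subset B_{n/2}$, the quasi-triangle inequality \eqref{eq:83} from Lemma~\ref{lem:15} will be available at every vertex we consider, and the points we will select will automatically belong to $B_{d(o,u)}$ as required.

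First I will establish a ``slow-decrease'' property for the sequence $\alpha_i := \tau_{p,n}(o, v_i)$. Since $v_i$ and $v_{i+1}$ are neighbours, for $n$ large one has $\tau_{p,n}(v_i, v_{i+1}) = 1$ (as noted just before Lemma~\ref{lem:14}); plugging this into \eqref{eq:83} applied to the triple $(o, v_i, v_{i+1})$ yields $\alpha_{i+1} \ge \alpha_i - \varepsilon_n$. Starting from $\alpha_0 = 1$ and ending at $\alpha_L \le 2/3$, this forces $L \ge 1/(3\varepsilon_n) = \log^{10} n / 3$, which is comfortably larger than $\log^4 n$ for $n$ large.

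Next I will perform a greedy sublevel construction. For $k = 0, 1, 2, \ldots$, let $i_k$ be the smallest index with $\alpha_{i_k} \le 1 - 10 k \varepsilon_n$. Minimality combined with the slow-decrease bound gives the two-sided estimate
\begin{equation*}
1 - (10k+1)\varepsilon_n \; < \; \alpha_{i_k} \; \le \; 1 - 10 k \varepsilon_n.
\end{equation*}
These indices remain well-defined as long as the threshold $1 - 10 k \varepsilon_n$ stays above $\alpha_L$; hence certainly for $k \le K := \lfloor 1/(30\varepsilon_n)\rfloor$, producing $K+1 \ge \log^4 n$ candidate vertices $v_{i_k}$ for $n$ large, all sitting in $B_L$.

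Finally I will verify pairwise $\tau$-separation. For $k < j$, applying \eqref{eq:83} to the triple $(o, v_{i_k}, v_{i_j})$ and rearranging (using $\alpha_{i_k} \ge 2/3 - \varepsilon_n > 0$) gives
\begin{equation*}
\tau_{p,n}(v_{i_k}, v_{i_j}) \; \le \; \frac{\alpha_{i_j} + \varepsilon_n}{\alpha_{i_k}} \; \le \; \frac{1 - (10 j - 1)\varepsilon_n}{1 - (10 k + 1)\varepsilon_n}.
\end{equation*}
Since $j \ge k+1$ forces $10 j - 1 \ge 10 k + 9$, a routine check (amounting to the trivial inequality $-7 \le (10k+1)\varepsilon_n$) shows this ratio is bounded by $1 - \varepsilon_n$. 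Setting $A = \{v_{i_0}, \ldots, v_{i_K}\}$ then completes the proof. The key step—really the only non-bookkeeping point of the argument—is exploiting Lemma~\ref{lem:15} in \emph{both} directions: as a lower bound on $\alpha_{i+1}$ in terms of $\alpha_i$ to guarantee enough sublevel indices exist along the geodesic, and as an upper bound converting level gaps into $\tau$-separation. Choosing the sublevel step as a sufficiently large multiple of $\varepsilon_n$ (here $10\varepsilon_n$) is what reconciles the additive error $\varepsilon_n$ in \eqref{eq:83} with the desired pairwise gap $1 - \varepsilon_n$.
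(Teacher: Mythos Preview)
Your argument is correct and takes a genuinely different, more elementary route than the paper. The paper builds $A$ via a Cantor-type recursion: starting from the segment $[o,u]$, it repeatedly splits each segment into two subsegments using an intermediate-value claim (their Claim~\ref{claim:12345}), so that after $k\approx 8\log\log n$ rounds one obtains $2^k\approx (\log n)^{8\log 2}$ points with pairwise $\tau$ bounded by $(2/3)^{1/3^k}+6k\varepsilon_n$. This requires tracking how the multiplicative and additive errors propagate through the recursion. Your approach instead reads off the points directly from the level sets of the single function $i\mapsto\alpha_i=\tau(o,v_i)$ along the geodesic: the one-sided Lipschitz bound $\alpha_{i+1}\ge\alpha_i-\varepsilon_n$ (from \eqref{eq:83} with $\tau(v_i,v_{i+1})=1$) guarantees roughly $1/(30\varepsilon_n)\asymp\log^{10}n$ distinct sublevel crossings at spacing $10\varepsilon_n$, and a second application of \eqref{eq:83} with $y=v_{i_k}$ converts the gap $\alpha_{i_k}-\alpha_{i_j}\ge 8\varepsilon_n$ into $\tau(v_{i_k},v_{i_j})\le 1-\varepsilon_n$ via the ratio bound you wrote. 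Your method is shorter, avoids the recursive bookkeeping entirely, and in fact yields many more points ($\asymp\log^{10}n$ rather than $\asymp\log^{5.5}n$), although only $\log^4 n$ are needed downstream.
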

	
	\begin{remark}
		When the underlying graph $G$ is the hypercubic lattice $\mathbb Z^d$, the lemma above can be easily proved using the symmetries of the graph.
	\end{remark}

	\begin{proof}
		Let $n$ be a large integer. Let $u\in B_{n/2}$ and  fix $\gamma$ some geodesic path from the origin $o$ to $u$. If $a$ and $b$ denote two vertices belonging to this fixed path, we denote by $[a,b]$ the set of all vertices $x$ of the path lying between $a$ and $b$, i.e. satisfying $d(a,b)=d(a,x)+d(x,b)$. Sets of this form will be called \emph{segments} in this proof.  By convention, we always assume that $d(o,a)\le d(o,b)$ when we consider a segment $[a,b]$.
		
		We will build the set $A$ as a subset of the segment $I=[o,u]$, in a way reminiscent of the construction of the triadic Cantor set. We construct two suitable subsegments $I_0=[a_0,b_0]$ and $I_1=[a_1,b_1]$, such that the two-seed function $\tau(x,y)$, $x\in I_0$, $y\in I_1$ is well-controlled and both $\tau(a_0,b_0)$ and $\tau(a_1,b_1)$ have a nice upper bound.
		Then, we repeat this splitting operation in each of the two segments. After $k$ steps, we construct $2^k$ intervals, hence getting $2^{k+1}$ endpoints. The construction is such that the two-seed function between any two of these points is well-controlled. The proof is then concluded by choosing a suitable  number of steps. The ``splitting operation'' of an interval at one step relies on the following claim.
		
		For convenience, as in the proof of Lemma~\ref{lem:15}, we drop the indices $p$ and $n$ from the notation $\tau_{p,n}$ and $\varepsilon_n$. 
		
		\begin{claim}\label{claim:12345}Let $\alpha \ge \frac{1}{2}$. Let $[a,b]$ be a segment such that $\tau(a,b)\leq \alpha$. Then, there exist two vertices $a',b'\in [a,b]$ such that   
			\begin{equation}
				\label{eq:100}
				\forall x\in [a,a']\ \forall y\in  [b',b]  \quad \tau(x,y)\le \alpha^{1/3}+6\varepsilon
			\end{equation}
			and
			\begin{equation}
				\label{eq:101}
				\max(\tau(a,a'),\tau(b,b'))\le   \alpha^{1/3}.
			\end{equation}
		\end{claim}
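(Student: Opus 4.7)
The plan is to parametrise $[a,b]$ by a geodesic path $\gamma_0 = a, \gamma_1, \ldots, \gamma_L = b$, track the two functions $g(i) := \tau(a, \gamma_i)$ and $h(i) := \tau(b, \gamma_i)$, and take $a'$ and $b'$ to be carefully chosen threshold crossings. Throughout I write $\varepsilon := \varepsilon_n$ and suppress $p, n$ from the notation $\tau_{p,n}$. Two preliminary consequences of Lemma~\ref{lem:15} are needed. Since consecutive vertices $\gamma_i, \gamma_{i+1}$ have strongly overlapping seeds, $\tau(\gamma_i, \gamma_{i+1}) = 1$ for $n$ large; applying~\eqref{eq:83} to the triple $(a, \gamma_i, \gamma_{i+1})$ then yields $|g(i) - g(i+1)| \leq \varepsilon$, and the analogous estimate holds for $h$. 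Applying~\eqref{eq:83} instead to $(a, \gamma_i, b)$ together with the hypothesis $\tau(a,b) \leq \alpha$ gives the key product bound $g(i)\,h(i) \leq \alpha + \varepsilon$ for every $i$.

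Now define $i^* := \min\{i : g(i) \leq \alpha^{1/3}\}$ and $a' := \gamma_{i^*}$; this is well defined since $g(0) = 1$ and $g(L) \leq \alpha \leq \alpha^{1/3}$, and it directly gives $\tau(a, a') \leq \alpha^{1/3}$. By $\varepsilon$-continuity and the definition of $i^*$, one has $g(i) \geq \alpha^{1/3} - \varepsilon$ for every vertex in $[a, a']$. Plugging this into the product bound at $i = i^*$ yields $h(i^*) \leq (\alpha + \varepsilon)/(\alpha^{1/3} - \varepsilon)$, which is smaller than $\alpha^{1/3} - \varepsilon$ whenever $\alpha$ is bounded away from~$1$ by a constant multiple of $\varepsilon$ (the regime relevant to Lemma~\ref{lem:16}; the complementary case $\alpha > 1 - O(\varepsilon)$ is vacuous because then $\alpha^{1/3} + 6\varepsilon > 1$). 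Consequently $h(i^* + 1) \leq h(i^*) + \varepsilon \leq \alpha^{1/3}$, so that $j^* := \max\{j : h(j) \leq \alpha^{1/3}\}$ satisfies $j^* \geq i^* + 1$. Setting $b' := \gamma_{j^*}$, the segments $[a,a']$ and $[b',b]$ are \emph{disjoint}, $\tau(b,b') \leq \alpha^{1/3}$, and the symmetric argument gives $h(j) \geq \alpha^{1/3} - \varepsilon$ for every $y = \gamma_j \in [b',b]$.

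Property~\eqref{eq:100} now follows by applying~\eqref{eq:83} twice to the quadruple $(a, x, y, b)$, yielding
\[
\alpha \;\geq\; \tau(a,b) \;\geq\; \tau(a,x)\,\tau(x,y)\,\tau(y,b) - 2\varepsilon,
\]
which rearranges to $\tau(x,y) \leq (\alpha + 2\varepsilon)/[\tau(a,x)\,\tau(y,b)] \leq (\alpha + 2\varepsilon)/(\alpha^{1/3} - \varepsilon)^2$. A direct computation bounds the right-hand side by $\alpha^{1/3} + 6\varepsilon$ for $\alpha \geq 1/2$ and $\varepsilon$ small, completing~\eqref{eq:100}; property~\eqref{eq:101} is immediate from the construction. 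The main subtlety is avoiding the coincidence $a' = b'$, which would invalidate~\eqref{eq:100} at $x = y$; this is precisely defused by the product bound $g \cdot h \leq \alpha + \varepsilon$, which prevents $g$ and $h$ from sitting simultaneously near $\alpha^{1/3}$ at the same vertex and thereby forces $b'$ strictly past $a'$ along~$\gamma$.
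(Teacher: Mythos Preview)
Your proof is correct and follows essentially the same route as the paper: both use the $\varepsilon$-Lipschitz property of $x\mapsto\tau(a,x)$ (from Lemma~\ref{lem:15}) and the discrete intermediate value theorem to locate $a'$ and $b'$, and both derive \eqref{eq:100} by chaining $\tau(a,b)\ge\tau(a,x)\tau(x,y)\tau(y,b)-2\varepsilon$. Your explicit verification that $[a,a']$ and $[b',b]$ are disjoint (via the product bound $g\cdot h\le\alpha+\varepsilon$) is a nice clarification that the paper leaves implicit --- the paper simply derives the bound \eqref{eq:100} directly, which a posteriori rules out $x=y$.
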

		
		\begin{proof}[Proof of Claim~\ref{claim:12345}.]
			First, notice that if $\alpha\geq1$, then the claim is straightforward: taking $I_1$ and $I_2$ \emph{any} two (possibly equal) subsegments of $I$ works. We therefore assume that $\alpha<1$.
			
			Observe that, provided $n$ is large enough, $x\mapsto \tau(a,x)$ is $\varepsilon$-Lipschitz on $B_{n/2}$. This follows from the estimate~\eqref{eq:83} from Lemma~\ref{lem:15} and the fact that for any two adjacent vertices $x$ and $y$ in $B_{n/2}$, we have $\tau(x,y)=1$. 
			Consider all the vertices $c\in [a,b]$ such that $\tau(a,c)\le \alpha^{1/3}$. This set is nonempty, as $\tau(a,b)\le\alpha<\alpha^{1/3}$. We denote by $a'$ the vertex of this set which is closest to $a$.
			Since $\tau(a,a)=1$, the $\varepsilon$-Lipschitzianity of  $\tau(a,\hspace{0.4mm}\cdot\hspace{0.8mm})$ gives $\alpha^{1/3}-\varepsilon\le\tau(a,a')\le \alpha^{1/3}$. As a result, we have
			\begin{equation}
				\label{eq:102}
				\forall x\in [a,a']\quad \tau(a,x)\ge  \alpha^{1/3}-\varepsilon,
			\end{equation}
			where the $\varepsilon$ is actually needed only when $x=a'$.
			Similarly, we take $b'\in [a,b]$ such that $\tau(b',b)\le   \alpha^{1/3}$ and
			\begin{equation}
				\label{eq:103}
				\forall x\in [b',b]\quad \tau(x,b)\ge  \alpha^{1/3}-\varepsilon.
			\end{equation}
			Equation~\eqref{eq:101} holds by definition and it remains to prove \eqref{eq:100}.
			Let $x\in [a,a']$ and $y\in [b,b']$. Since $\tau(a,b)\leq\alpha$, using twice the estimate~\eqref{eq:83} yields
			\begin{equation}
				\alpha \geq \tau(a,x)\tau(x,y)\tau(y,b)-2\varepsilon.\label{eq:80}
			\end{equation}
			Using the inequality $\alpha \ge \frac{1}{2}$ and recalling that both $\tau(a,x)$ and $\tau(y,b)$ are larger than $\alpha^{1/3}-\varepsilon$, we obtain
			\begin{equation}
				\tau(x,y)\leq \alpha^{1/3}+{2}{\alpha^{-2/3}}\varepsilon+2\varepsilon\le \alpha^{1/3}+6\varepsilon.\label{eq:81}
			\end{equation}
		\end{proof}

		Let us prove by induction that for every $k\ge 0$, we can find a family of segments $I_1,\dots, I_{2^k}$ that satisfies the following conditions:
		\begin{itemize}
			\item for $i\neq j$, for any $x\in I_i$ and any $y$ in $I_j$, we have $\tau(x,y)\le (2/3)^{1/3^k}+6k\varepsilon$,
			\item for any segment $[a,b]$ of the family, we have $\tau(a,b)\le  (2/3)^{1/3^k}$.
		\end{itemize}
		
		For $k=0$, taking $[o,u]$ works, by hypothesis. Let $k\geq0$ be such that the property holds at step $k$, and let us prove that the property holds at step $k+1$. Let us take $I_1,\dots,I_{2^k}$ as above. For every $i$, we apply Claim~\ref{claim:12345} to $I_i$, which yields two subsegments $I^{(1)}_i$ and $I^{(2)}_i$ of $I_i$. For $x\in I^{(1)}_i$ and $y\in I^{(2)}_i$, we have
		$$\tau(x,y)\le \left((2/3)^{1/3^k}+6k\varepsilon\right)^{1/3}+6\varepsilon \le (2/3)^{1/3^{k+1}}+6(k+1)\varepsilon.$$
		Likewise, if $[a,b]$ denotes either $I^{(1)}_i$ or $I^{(2)}_i$, we have $
		\tau(a,b)\leq (2/3)^{1/3^{k+1}}$.
		It remains to check that if $i\neq j$, then for every $x\in I^{(0)}_i \cup I^{(1)}_i$ and every $y\in I^{(0)}_j \cup I^{(1)}_j$, we have $\tau(x,y)\le (2/3)^{1/3^{k+1}}+6(k+1)\varepsilon$. But this is clear: since $x\in I_i$, $y\in I_j$ and $i\neq j$, we have
		$$\tau(x,y)\le (2/3)^{1/3^k}+6k\varepsilon\le (2/3)^{1/3^{k+1}}+6(k+1)\varepsilon.$$
		The result thus holds for all $k$.
		
		Let us use this result for $k=\lceil 8 \log \log n\rceil$, which we will handle as $8 \log \log n$ for readability. Let $I_1,\dots,I_{2^k}$ be as above for this specific value of $k$. Let $x_1\in I_1, \dots, x_{2^k}\in I_{2^k}$. Given $i\neq j$, we have
		$$
		\tau(x_i,x_j)\leq (2/3)^{1/3^k}+6k\varepsilon_n\le e^{\log(2/3)/\log(n)^{8\log3}} + 48\frac{\log \log n}{\log^{10} n}.
		$$
		Since $8\log(3)<10$, taking $n$ large enough guarantees that $\tau(x_i,x_j)\leq 1-\varepsilon_{n/2}$. In particular, we have $\tau(x_i,x_j)<1$, hence $x_i\neq x_j$. We set $A=\{x_1,\dots,x_{2^k}\}$. It remains to check that $A$ contains at least $\log^4 n$ elements, which is straightforward as $2^k=(\log n)^{8 \log 2}$ and $8\log 2 > 4$.
	\end{proof}
	
	\subsection{Proof of Proposition~\ref{prop:5}}
	\label{sec:proof-prop}
	
	Without loss of generality, we can prove the statement for $p+\delta$ instead of $p$, where $p>p_c$ and $\delta>0$.
	Assume that $\kappa_{p+\delta}(m,2n)\leq \theta(p+\delta)^2/2$. We can thus take  $u\in B_m$ such that
	\begin{equation}
		\label{eq:104}
		\mathrm P_{p+\delta}[o \lr{B_{2n}} u]\le \theta(p+\delta)^2/2. 
	\end{equation}
	By the estimate~\eqref{eq:84} from Lemma~\ref{lem:15}, this implies that $\tau_{p+\delta,2n}(o,u)\leq\frac{1}{2}+ \frac{\varepsilon_{2n}}{\theta(p+\delta)^2}\leq \frac{2}{3}$, provided $n$ is taken large enough. By Lemma~\ref{lem:16}, we can find a subset $A\subset B_{m}$ of cardinality $\lceil\log^4 n\rceil$ such that for every $x,y\in A$, we have
	\begin{equation}
		\label{eq:105}
		\tau_{p+\delta,2n}(x,y)\le  1-\varepsilon_{2n}.
	\end{equation}
	In words, the inequality above states that the two-seed function is not too close to $1$ for every pair of points of $A$.  By decreasing the edge density from $p+\delta$ to $p$, we obtain that the points of $A$ are pairwise connected with low probability. More precisely, the equation above and the contrapositive of Lemma~\ref{lem:14}  imply
	\begin{equation}
		\label{eq:106}
		\forall x,y\in A \quad \mathrm P_p [x\lr{B_n} y]\le \varepsilon_{2n}.
	\end{equation}
	Together with   $\displaystyle\min_{x\in A}\mathrm P_p[x\lr{}\partial B_n]\ge \theta(p)\ge 2 |A| \varepsilon_{2n}$  (which holds for $n$ large enough), we obtain   
	\begin{equation}
		\label{eq:107}
		\min_{x\in A} \mathrm P_p[x\lr{}\partial B_n]\ge \theta(p) \ge 2 |A|  \max_{x,y\in A}\mathrm P_p [x\lr{B_n} y].
	\end{equation}
	By applying Proposition~\ref{prop:6} to the graph $G$ induced by the ball $B_n$, we finally get
	\begin{equation}
		\label{eq:108}
		\mathrm P_{p+\delta}[A\lr{} \partial B_n]\ge 1-e^{-2\delta\theta(p)|A|},
	\end{equation}
	which concludes the proof since  $A\subset B_m$ and $|A|\ge \log^4n$. \qed
	
	\section{Uniqueness via sprinkling}
	\label{sec:uniq-via-sprinkl}
	
	In this section, we establish the following proposition, which revisits the techniques of \cite{benjamini2017homogenization}. Recall that the coupled measure $\mathbf P$ and the sprinkled uniqueness event $U_{p,q}(m,n)$ are defined in Section~\ref{sec:definition-notation}.
	
	\begin{proposition}
		\label{prop:7}
		Let $G$ be a transitive graph of polynomial growth with $d\ge2$.
		Let $R \ge 1$ be so that the conclusion of Lemma~\ref{lem:1} holds.
		Let $p\in [0,1]$. Let $\eta, \delta>0$ be such that $p+\delta\leq 1$. Let $(k,m,n)$ be such that $R\le k \le m \le n$ and $n\geq \log |B_{2n}|$. We make the following two assumptions:
		\begin{enumerate}[label=(\alph*)]
			\item\label{item:a} $\mathrm P_p[B_k\lr{}\partial B_{6n}]\ge 1-\frac{\eta}{|B_{2n}|}$, 
			\item\label{item:b} $ \forall x,y\in B_{3k}$ $\mathrm P_p[x\lr{B_m} y]\ge \delta $. 
		\end{enumerate}
		Then, we have
		\begin{equation}
			\label{eq:109}
			\mathbf P[U_{p,p+\delta}(n,4n)] \ge 1- \eta  - 100|B_{2n}|^2 \exp\left(-\frac{\delta^3n}{8m \log|B_{2n}|)}\right).
		\end{equation} 
	\end{proposition}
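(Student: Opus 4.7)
The plan is to combine a high-probability good event capturing the local cluster structure enforced by hypothesis (a) with a sprinkled orange-peeling argument in which hypothesis (b) plays the role of the local bridging mechanism.

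First, I would define the good event
\begin{equation*}
\Omega_0 := \bigcap_{x \in B_{2n}}\bigl\{B_k(x) \lr[p]{} \partial B_{6n}(x)\bigr\}.
\end{equation*}
By (a), the automorphism-invariance of $\mathrm P_p$, and a union bound over the $|B_{2n}|$ centres, $\mathbf P[\Omega_0^c] \le \eta$. Any failure of $U_{p,p+\delta}(n,4n)$ is witnessed by a pair $x,y \in B_n$ whose $p$-clusters in $B_{4n}$ are disjoint, both reach $\partial B_{4n}$, and remain in different $(p+\delta)$-clusters of $B_{4n}$. A union bound over such pairs reduces the problem to showing that, for every $x,y \in B_n$,
\begin{equation*}
\mathbf P[\Omega_0 \cap F(x,y)] \le 100\exp\!\left(-\tfrac{\delta^3 n}{8m \log|B_{2n}|}\right),
\end{equation*}
where $F(x,y)$ denotes the event just described.

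Fix such a pair $(x,y)$. I would pick $N := \lfloor n/(c_0 m \log|B_{2n}|)\rfloor$ disjoint thickened-sphere layers $A_1,\dots,A_N$ of the form $A(r_i, 3m\log|B_{2n}|)$ with $2n \le r_i \le 3n$, as built in Section~\ref{sec:geom-lemmas}; Lemma~\ref{lem:6} will control their intrinsic diameters, and Lemma~\ref{lem:1} their coarse connectedness. Then I would condition on the $p$-clusters of $x$ and $y$ in $B_{4n}$ being fixed disjoint subsets $C$ and $C'$, both reaching $\partial B_{4n}$. As in the proof of Lemma~\ref{lem:8}, under this conditioning the sprinkled configuration $\omega_{p+\delta}$ on the edges not already revealed is still a product of independent Bernoullis: the marginals are at least $\delta$ on the closed boundary $\partial C \cup \partial C'$ and at least $p+\delta$ elsewhere.

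The heart of the argument is to establish
\begin{equation*}
\mathbf P\!\left[C \lr[p+\delta]{A_i} C' \,\middle|\, \Omega_0,\,C,\,C'\right] \ge \delta^3 \qquad \text{for every } i.
\end{equation*}
Both $C$ and $C'$ cross every layer $A_i$ by Lemma~\ref{lem:5}, so each intersects the exposed sphere $S^\infty_{r_i}$. Using Lemma~\ref{lem:6} together with the fact that $A_i$ has thickness $\Theta(m\log|B_{2n}|)$, I would extract a vertex $z \in A_i$ such that $B_{3k}(z)$ meets both $C$ and $C'$: on $\Omega_0$ every ball of radius $k$ centred inside $A_i$ already hosts a $p$-cluster reaching far outside, and a pigeonhole argument along the intrinsic path inside $A_i$ provided by Lemma~\ref{lem:6} forces such a coincidence once the layer is thick enough. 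Assumption (b), applied inside $B_m(z)$ (via automorphism-invariance), then gives a $p$-open path between a vertex of $C \cap B_{3k}(z)$ and a vertex of $C' \cap B_{3k}(z)$ with conditional probability at least $\delta$. One extra $\delta$ on each side, to $(p+\delta)$-open a $p$-closed boundary edge of $C$ and $C'$ respectively, concatenates this path into a genuine $(p+\delta)$-connection from $C$ to $C'$ inside $A_i$, yielding the $\delta^3$ bound.

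Because the layers $A_i$ are disjoint and the $(p+\delta)$-edges used live in the corresponding $A_i$, the $N$ layer-connection events are conditionally independent given $(\Omega_0, C, C')$. Consequently,
\begin{equation*}
\mathbf P[F(x,y) \mid \Omega_0,\,C,\,C'] \le (1-\delta^3)^N \le \exp\!\left(-\tfrac{\delta^3 n}{8m\log|B_{2n}|}\right)
\end{equation*}
for an appropriate $c_0$. Summing over the disjoint pairs $(C,C')$ and then over $(x,y)\in B_n^2$ produces the factor $|B_{2n}|^2$ in the statement. The hardest step will be the extraction of the bridging vertex $z$ inside each layer: in the absence of Euclidean symmetries, this has to be carried out purely through the intrinsic geometry of thickened spheres provided by Section~\ref{sec:geom-lemmas}, and the $\log|B_{2n}|$ factor in the layer thickness is precisely what is needed to make the pigeonhole bookkeeping go through. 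This is the point at which the argument has to extend, rather than merely quote, the homogenisation lemma of Benjamini--Tassion.
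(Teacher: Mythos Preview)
There is a genuine gap in the bridging step, and it is exactly the step you flag as hardest. Given two \emph{specific} $p$-clusters $C$ and $C'$ crossing a layer $A_i$, there is no reason they come within distance $3k$ of each other anywhere in $A_i$. Walking along an $R$-connected path in $S^\infty_{r_i}$ from a point of $C$ to a point of $C'$, the good event $\Omega_0$ does guarantee that every ball $B_k$ along the way is $p$-connected to $\partial B_{4n}$, but the cluster witnessing this may be a third cluster $C''$, then a fourth, and so on: the pigeonhole argument you sketch only shows that $C$ comes close to the \emph{union} $\widetilde C$ of all other crossing clusters, not to $C'$ in particular. Thickening the layer to $\Theta(m\log|B_{2n}|)$ does not repair this, since the number of intervening clusters can be as large as $|\partial B_{4n}|$, which is polynomial in $n$ rather than logarithmic.

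The paper's proof addresses exactly this obstruction by abandoning the pairwise orange-peeling for an \emph{iterative halving} scheme. One conditions on the full family of $p$-clusters touching $\partial B_{4n}$ (encoded in a configuration $Y_s$), not just on two of them, and shows that in a single layer of thickness $\Delta\asymp n/\log|B_{2n}|$ each crossing cluster $C$ merges with \emph{some} other crossing cluster with probability at least $1-25e^{-\delta^3\Delta/(3m)}$: the geometric argument produces a vertex $x_j$ with $d(x_j,C)\le 3k$ and $d(x_j,\widetilde C)\le 3k$, and hypothesis~(b) supplies the bridge. On this event the number of crossing clusters at least halves; iterating over roughly $\log|B_{2n}|$ nested layers brings the count from at most $|B_{2n}|$ down to $1$. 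The factor $\log|B_{2n}|$ in the final exponent thus comes from the number of halving steps, not from the layer thickness. Your conditioning on $(\Omega_0,C,C')$ is also problematic as written, because $\Omega_0$ depends on $\omega_p$ globally and spoils the product structure of $\omega_{p+\delta}$ outside $C\cup C'\cup\partial C\cup\partial C'$; conditioning on the full exploration from $\partial B_{4n}$, as the paper does, makes the good event a deterministic property of the conditioning and restores independence.
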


	In this section, we consider the family of coupled configuration $(\omega_p)_{p\in[0,1]}$ under the measure $\mathbf P$. Our goal is to show that, with high probability, all the $p$-clusters  crossing from $B_n$ to $\partial B_{4n}$ are $(p+\delta)$-connected to each other within the annulus $A_{n,2n}$. In this section and contrary to the \emph{connected} annuli of Section~\ref{sec:renorm-corr-funct}, the annulus $A_{r,s}$ is defined as the set of all edges $e\subset B_{4n}$ that intersect $B_s$ but not $B_r$. If we achieve our goal, the crossing $p$-clusters will a fortiori get $(p+\delta)$-connected within the larger set $B_{4n}$. 
	
	For every $r\le 2n$, consider the percolation configuration $Y_r$ in $B_{4n}$ defined as follows: for every edge $e\subset B_{4n}$,
	\begin{equation}
		\label{eq:110}
		Y_r(e):=
		\begin{cases}
			\omega_{p}(e)\mathbf 1_{\{e\lr[p]{}\partial B_{4n}\}}& \text{if $e$ intersects $B_r$} \\
			\omega_{p+\delta}(e) & \text{if $e\in A_{r,4n}$.}
		\end{cases}
	\end{equation}
	The configuration $Y_r$ can be explored and understood in the following manner. First, we perform the classical exploration from $\partial B_{4n}$ of all the $\omega_p$-clusters touching it --- by doing so, we also reveal the (closed) edges at the boundary of these clusters. Then, we further reveal the $\omega_{p+\delta}$-status of every single edge included in $A_{r,4n}$. Conditionally on $Y_r$, the status of the edges intersecting $B_r$ are independent. Besides, for any edge $e$ that intersects $B_r$ and does not already satisfy $Y_r(e)=1$, we have:
	\begin{equation}
		\label{eq:111}
		\mathbf{P}[\omega_{p+\delta}=1~|~Y_r]\
		\begin{cases}
			\geq \delta & \text{if }e\text{ is adjacent to some }e'\text{ satisfying }Y_r(e')=1,\\
			=p+\delta& \text{otherwise}.
		\end{cases}
	\end{equation}
	
	\begin{figure}
		\centering
		\includegraphics[scale=0.8]{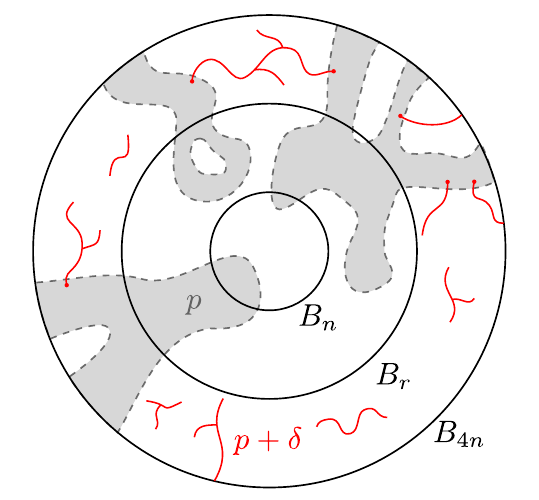}
		\caption{The configuration $Y_r$ is the union of the grey area and the red one.}
		\label{fig:7}
	\end{figure}

	For every configuration  $\omega$ in $B_{4n}$ and every $r\leq 2n$, define $\mathscr C_r(\omega)$ to be the set of all the clusters of $\omega$ intersecting both $B_r$ and $\partial B_{4n}$. We further set
	\begin{equation}
		\label{eq:112}
		N_r(\omega)=|\mathscr{C}_r(\omega)|.
	\end{equation}
	We aim at proving that with high probability, we have $N_n(Y_{n})=1$. We will do so by making use of the following event, which roughly states that ``large clusters grow from everywhere in $B_{2n}$'':
	\begin{equation}
		\label{eq:113}
		\mathcal E=\{\forall x\in B_{2n}\ B_k(x)\lr[p]{}\partial B_{4n}\}.   
	\end{equation}
	We will prove that $N_n(Y_{n})=1$ holds with high probability by proving that $\mathcal{E}$ holds with high probability and that, conditionally on this event, $N_n(Y_{n})=1$ holds with high probability.
	
	We will implement this strategy by repeatedly using the following lemma. Lemma~\ref{lem:17} guarantees that sprinkling a layer of thickness $\simeq \frac{n}{\log B_{2n}}$ is very likely to divide the number of ``crossing clusters'' by a factor at least 2 --- except if this number is already 1. Iterating this process $\simeq \log B_{2n}$ times inwards (going from the sphere of radius $2n$ to that of radius $n$) will yield $N_n(Y_{n})=1$ with high probability.

	\begin{lemma}
		\label{lem:17}
		For every $r,s\in[n,2n]$ such that $r\le s$, we have
		\begin{equation}\label{eq:114}
			\mathbf P[\mathcal E \cap \big\{N_r(Y_r)> \max(1,\tfrac12
			N_s(Y_s))\big\} ]\le 25 |B_{2n}| \exp\left(-\frac{\delta^3(s-r)}{3m}\right).
		\end{equation}
	\end{lemma}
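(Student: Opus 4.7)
The strategy is an orange-peeling argument in the spirit of Lemma~\ref{lem:8}, with the ball $B_k$ serving as a canonical ``anchor'' to which we attempt to $(p+\delta)$-connect every other $Y_s$-crossing cluster. Partition the annulus $A_{r,s}$ into $\ell:=\lfloor (s-r)/(2m)\rfloor$ disjoint concentric sub-annuli $A^{(1)},\dots,A^{(\ell)}$ of thickness $2m$. On the event $\mathcal E$, taking $x=o$ gives $B_k\lr[p]{}\partial B_{4n}$, so $B_k$ meets some cluster of $\mathscr C_s(Y_s)$. If every cluster of $\mathscr C_s(Y_s)$ is $(p+\delta)$-connected to $B_k$ inside $B_{4n}$, then all of them lie in a single $Y_r$-component and $N_r(Y_r)\le 1\le \max(1,N_s(Y_s)/2)$, so the bad event fails. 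It thus suffices to bound, for each fixed $v\in B_s$, the probability that $v$ is $Y_s$-connected to $\partial B_{4n}$ but not $Y_r$-connected to $B_k$.

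The core per-sub-annulus claim is: conditionally on $\omega_p$ (which on $\mathcal E$ determines both $Y_s$ and the $Y_s$-clusters of $v$ and of $B_k$) and on the status of $\omega_{p+\delta}$ outside $A^{(i)}$, the $Y_s$-cluster of $v$ and that of $B_k$ become $(p+\delta)$-connected inside $A^{(i)}$ with probability at least $\delta^3$. Indeed, both these $Y_s$-clusters cross $A^{(i)}$ by Lemma~\ref{lem:5}, and Lemma~\ref{lem:6} yields a path within $A^{(i)}$ connecting their respective intersections with the exposed sphere of an intermediate radius. Along this path, assumption~(b) gives (with probability at least $\delta$) a $p$-open connection between two nearby vertices (within distance~$3k$) belonging to these clusters, inside a common ball~$B_m$; two additional $\delta$-factors coming from the sprinkled ``bridging'' edges turn this into a $(p+\delta)$-path between the two clusters. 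Since the $(p+\delta)$-sprinklings in distinct sub-annuli are conditionally independent given $\omega_p$, iterating over $\ell$ sub-annuli yields an upper bound of $(1-\delta^3)^{\ell}\le e^{-\delta^3\ell}$.

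A union bound over the at most $|B_{2n}|$ vertices $v\in B_s$ then gives
\[
\mathbf{P}\big[\mathcal E\cap\{N_r(Y_r)>1\}\big]\le |B_{2n}|\,e^{-\delta^3\ell}.
\]
Combining this with the elementary inequality $\ell\ge (s-r)/(3m)$, valid whenever $s-r\ge 6m$ (the other cases being absorbed into the constant~$25$), we obtain the stated bound.

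\textbf{Main obstacle.}
The per-sub-annulus merging estimate is the delicate point. Assumption~(b) only gives a $p$-connection between pairs of vertices lying in a common ball of radius~$m$ and at mutual distance at most~$3k$. For two $Y_s$-crossing clusters whose closest vertices inside $A^{(i)}$ may be separated by a distance much larger than $3k$ along the exposed sphere, a naive chaining of~(b) over many intermediate hops would give a probability decaying exponentially with the path length, well below the constant $\delta^3$ required. The role of $\mathcal E$ is precisely to overcome this: the density of $p$-clusters reaching $\partial B_{4n}$ forces any two $Y_s$-crossing clusters to have vertices within distance~$3k$ of each other somewhere in $A^{(i)}$, thereby reducing the chaining to a bounded number of hops.
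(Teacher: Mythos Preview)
Your per-sub-annulus merging claim is incorrect, and this is a genuine gap. You assert that on $\mathcal E$, any two specific $Y_s$-crossing clusters --- in particular the cluster of $v$ and the anchor cluster of $B_k$ --- must come within distance $3k$ of each other somewhere in $A^{(i)}$. This is false. The event $\mathcal E$ only guarantees that every ball $B_k(z)$ with $z\in B_{2n}$ touches \emph{some} cluster of $\mathscr C_s(Y_s)$; it does not force that cluster to be the anchor. Concretely, walking along $S^\infty_j$ from a point of $C$ (the cluster of $v$) to a point of $C_0$ (the anchor cluster), the balls $B_k(z)$ along the way may successively meet intermediate clusters $C_1,C_2,\dots$ without there ever being a single $z$ with both $d(z,C)\le 3k$ and $d(z,C_0)\le 3k$. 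Hence the $\delta^3$ lower bound for merging $C$ with $C_0$ inside $A^{(i)}$ does not follow, and your attempt to prove the stronger conclusion $N_r(Y_r)\le 1$ directly breaks down.

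The paper's argument is tailored precisely to this obstruction. One conditions on $Y_s=\xi$ and, for each $C\in\mathscr C_r(\xi)$, sets $\widetilde C$ to be the union of \emph{all other} clusters of $\mathscr C_s(\xi)$. Then the dichotomy works: along a path in the $R$-neighbourhood of $S^\infty_j$ from $C$ to $\widetilde C$, take the first $z$ with $d(z,\widetilde C)=k+1$; since $B_k(z)$ is $\xi$-connected to $\partial B_{4n}$ (admissibility) and disjoint from $\widetilde C$, it must meet $C$, giving a point within $3k$ of both $C$ and $\widetilde C$. This yields, for each sub-annulus, probability at least $\delta^3$ that $C$ merges with \emph{some} other cluster --- not a prescribed one. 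The conclusion is therefore only $N_r\le\max(1,\tfrac12 N_s)$, and the full reduction to $N_n\le 1$ is obtained afterwards by iterating this halving $\simeq\log|B_{2n}|$ times in the proof of Proposition~\ref{prop:7}. Your plan conflates these two steps; fixing it requires replacing the anchor $B_k$ by the complement $\widetilde C$ and accepting the weaker halving conclusion.
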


	\begin{proof}[Proof of Lemma~\ref{lem:17}]
		Fix $r,s\in[n,2n]$ such that $r\le s$. The configuration $Y_r$ is obtained from $Y_s$ by adding some open edges in the annulus $A_{r,s}$. In particular, we have $Y_r\ge Y_s$ and our goal is to show that, with high probability, every cluster of $\mathscr{C}_s(Y_s)$ gets merged with at least one other cluster of $\mathscr{C}_s(Y_s)$ in the configuration $Y_r$. To analyse this ``merging effect'', let us introduce the configuration
		\begin{equation}
			\label{eq:115}
			Z=Y_r-Y_s.
		\end{equation}
		
		Let us condition on the possible values for $Y_s$. Say that a configuration $\xi$ in $B_{4n}$ is \emph{admissible} if $\mathbf P[Y_s=\xi]>0$ and $B_k(x)$ is connected to $\partial B_{4n}$ in $\xi$ for every $x\in B_{2n}$.  Writing $\mathbf P^{\xi}:=\mathbf P[\ \cdot\ |Y_s=\xi]$ for every admissible $\xi$, we can rewrite the left-hand side of \eqref{eq:114} as follows:
		\begin{align}
			\label{eq:116}
			&\mathbf P[\mathcal E \cap \big\{N_r(Y_r)> \max(1,\tfrac12
			N_s(Y_s))\big\} ]\\
			&\qquad	=\sum_{\xi\text{ admissible}}  \mathbf P^\xi[N_r(\xi+Z)> \max(1,\tfrac12
			N_s(\xi))] \mathbf P[Y_s=\xi].
		\end{align}
		
		From now on, we fix an admissible configuration $\xi$ and the proof will be complete once we show
		\begin{equation}
			\label{eq:117}
			\mathbf P^{\xi}[N_r(\xi+ Z)> \max(1, \tfrac12 N_s(\xi))]\le 25 |B_{2n}|\exp\left(-\frac{\delta^3(s-r)}{3m}\right). 
		\end{equation}  
		Notice that under $\mathbf P^{\xi}$, the coordinates $Z(e)$, $e\subset B_{s}$, are independent and satisfy
		\begin{equation}
			\label{eq:118}
			\mathbf P^{\xi}[Z(e)=1]
			\begin{cases}
				= p+\delta & \text{ if $e\in A_{r,s}$ and $e$ is not adjacent to an edge of $\xi$}\\
				\ge \delta& \text{ if $e\in A_{r,s}$, $\xi(e)=0$ and $e$ is adjacent to an edge of $\xi$},\\
				=0& \text{ if $e$ intersects $B_r$}.
			\end{cases}
		\end{equation}
		If $N_{r}(\xi)=1$, then we also have $N_{r}(\xi+Z)=1$. Indeed, all the open edges of $\xi$ intersecting $B_r$ are already connected to $\partial B_{4n}$ in $\xi$, and no open edge of $Z$ intersects $B_r$: therefore, adding the edges  of $Z$ to $\xi$ cannot create  a new cluster crossing from $B_r$ to $\partial B_{4n}$. As a result, if $N_{r}(\xi)=1$, the left-hand side of \eqref{eq:117} is equal to $0$ and there is nothing to prove. We thus assume that $N_{r}(\xi) > 1$ and we will prove that 
		\begin{equation}
			\mathbf P^{\xi}[N_r(\xi+ Z)>\tfrac12 N_s(\xi)]\le 25|B_{2n}|\exp\left(-\frac{\delta^3(s-r)}{3m}\right). 
		\end{equation}
		We will say that a cluster of $\mathscr C_{s}(\xi)$ is \emph{merged} if it is connected to at  least one other cluster of  $\mathscr C_{s}(\xi)$ in the configuration $\xi+ Z$. Please note that even if the cluster under consideration is taken in $\mathscr{C}_r(\xi)\subset \mathscr{C}_s(\xi)$, it is said to be merged if it is connected to at least one other cluster of $\mathscr C_{s}(\xi)$ --- with an $s$ --- in the configuration $\xi+ Z$. We will prove that typically  any cluster of  $\mathscr C_{s}(\xi)$ that crosses the annulus $A_{r,s}$ is merged with high probability. More  precisely we will prove that 
		\begin{equation}
			\label{eq:119}
			\forall C\in \mathscr C_r(\xi) \quad \mathbf P^{\xi}[C \text{ is not  merged}] \le 25\exp\left(-\frac{\delta^3(s-r)}{3m}\right). 
		\end{equation}
		
		The main idea behind \eqref{eq:119} is that the cluster $C$ crosses $\simeq (s-r)/m$ disjoint annuli of thickness $m$. In each annulus, the cluster $C$ comes at distance $\leq k$ to another cluster of $\mathscr C_s(\xi)$. Morally, Hypothesis \ref{item:b} can thus be used to bound the probability that the two clusters get connected by a $(\xi+Z)$-open path lying inside the considered annulus. Since these events are independent (we have disjoint annuli), we obtain the bound~\eqref{eq:119}.   We  postpone the rigorous derivation of
		\eqref{eq:119} to the end of the proof, and we now explain how to deduce \eqref{eq:117} from \eqref{eq:119}.

		The key observation is that if every cluster of $\mathscr C_{r}(\xi)$ is merged, then $N_r(\xi+Z)\le \frac12 N_s(\xi)$. To see this, let us assume that every cluster of $\mathscr C_{r}(\xi)$ is merged, and let us prove that every cluster of $\mathscr{C}_r(\xi+Z)$ contains at least two clusters of $\mathscr{C}_s(\xi)$. Let $C\in \mathscr{C}_r(\xi+Z)$. We have already seen that adding $Z$ to $\xi$ can create no new cluster crossing from $B_r$ to $\partial B_{4n}$. In other words, we can fix some $C_1\in \mathscr{C}_r(\xi)$ such that $C_1\subset C$. Since $C_1$ is merged, we can take some other cluster $C_2\in \mathscr{C}_s(\xi)$ such that $C_2 \subset C$, which ends the proof of the observation. Using this observation, the union bound, \eqref{eq:119} and $r\leq 2n$ yields
		\begin{align}
			\label{eq:120}
			&	\mathbf P^\xi[N_r(\xi+Z)>\tfrac12 N_s(\xi)]\\
			&\qquad\le \mathbf P^\xi[\exists C \in \mathscr C_r(\xi),\ C\text{ is not merged}]\\
			&\qquad \le  25|\mathscr C_r(\xi)| \exp\left(-\frac{\delta^3(s-r)}{3m}\right)\le 25|B_{2n}|\exp\left(-\frac{\delta^3(s-r)}{3m}\right),  
		\end{align}
		which is the desired inequality.
		
		We now give the details of the proof of \eqref{eq:119}.   Fix $C\in \mathscr C_r(\xi)$. Set $$J:=3m\mathbb Z \cap [r+3m,s-3m].$$ For every $j\in J$, we will use the set $S_j^\infty$ introduced on page~\pageref{page:123456}. 
		Notice that $|J|\ge \frac{s-r}{3m}-3$ and that $C$ intersects every $S_j^\infty$, by Lemma~\ref{lem:5}. Define
		$\widetilde{C}$ to be the union of all clusters of $\mathscr{C}_s(\xi)\setminus \{C\}$.
		We claim that for every $j\in J$, there exists $x_j\in S_j^\infty$ such that   
		\begin{equation}
			\label{eq:121}
			\mathrm d(x_j,C)\le 3k \quad\text{and}\quad  \mathrm d(x_j,\widetilde C)\le 3k.
		\end{equation}
		To prove this, consider a path $\gamma$ from some vertex $x \in C\cap S_j^\infty$ to another vertex in $\widetilde C\cap S_j^\infty$ which stays in the $R$-neighbourhood of $S_j^\infty$ (such path exists by Lemma~\ref{lem:1}). If $x$ is at distance $k$ or less from $\widetilde C$, we simply choose $x_j=x$ and we are done. Otherwise, consider $z\in\gamma$ at distance exactly $k+1$ from $\widetilde C$. Since $B_k(z)$ is connected to $\partial B_{4n}$ in $\xi$  (because $\xi$ is admissible) and $B_k(z)\cap \widetilde C=\emptyset$, we must have $B_k(z)\cap C\neq\emptyset$. It now suffices to choose $x_j \in S_j^\infty\cap B_R(z)$, use the triangle inequality and remember that $R\leq k$ to get the claim.
		
		For every $j\in J$, fix some $x_j$ as above and introduce the event
		\begin{equation}
			\label{eq:122}
			\mathcal E_j=\left\{C\lr{B_m(x_j)} \widetilde C \text{ in } \xi+Z\right\}.
		\end{equation}
		By Hypothesis \ref{item:b} and and a sprinkling argument (as in the proof of Lemma~\ref{lem:8}), we have
		\begin{equation}
			\label{eq:123}
			\mathbf P[\mathcal E_j]\ge \delta^2 \min_{x,y\in B_{3k}(x_j)}\mathrm P_p [x\lr{B_m(x_j)} y]\ge \delta^3.
		\end{equation}
		As soon as one of the event $\mathcal E_j$ occurs, the cluster $C$ gets connected to another cluster of $\mathscr C_s(\xi)$ in the configuration $\xi+Z$. Since the events $\mathcal E_j$ are independent, we get
		\begin{equation}
			\label{eq:124}
			\mathbf P^{\xi}[C \text{ is not  merged}] \le (1-\delta^3)^{|J|}\le \exp\left(3\delta^3-\frac{\delta^3(s-r)}{3m}\right) \le 25 \exp\left(\frac{\delta^3(s-r)}{3m}\right). 
		\end{equation}	
		
	\end{proof}

	\begin{proof}[Proof of Proposition~\ref{prop:7}]
		We wish to prove that 
		
		\begin{equation}
			\label{eq:125}
			\mathbf P[N_n(Y_n)>1] \le \eta  + 100 |B_{2n}|^2 \exp\left(-\frac{\delta^3n}{8m \log|B_{2n}|)}\right).
		\end{equation}
		First, we have  
		\begin{equation}
			\label{eq:126}
			\mathbf P[N_n(Y_n)>1]\leq\mathbf P[\mathcal E^c]+\mathbf P[\mathcal E \cap \{N_n(Y_n)>1\}].
		\end{equation}
		The first term on the right hand side can be bounded as follows, using the union bound, automorphism-invariance and Hypothesis~\ref{item:a}:
		\begin{equation}
			\label{eq:127}
			\mathbf P[\mathcal E^c]\le \sum_{x\in B_{2n}} \mathrm P_p[B_k(x) \nlr{} \partial B_{6n}(x)]\le |B_{2n}| \mathrm P_p[B_k \nlr{} \partial B_{6n}]\le \eta.
		\end{equation}
		
		In order to bound the second term, we use the property established in Lemma~\ref{lem:17}, namely that  clusters merge with high probability  in annuli of the form $A_{r,r+\Delta}$ for $\Delta:= \lceil\frac{n}{4\log| B_{2n}|}\rceil $, when sprinkling from $p$ to $p+\delta$. Recall that $N_r(Y_r)$ counts the number of $p$-clusters crossing from $B_r$ to $\partial B_{4n}$, where the clusters are identified if they are $(p+\delta)$-connected in the annulus $A_{r,4n}$. We start with the trivial bound $N_{2n}(Y_{2n})\le |B_{2n}|$ for the number of clusters in $Y_{2n}$ intersecting the ball $|B_{2n}|$.  Applying Lemma~\ref{lem:17} to $r=2n-\Delta$ and $s=2n$, we obtain
		\begin{equation}
			\label{eq:128}
			\mathbf P[\mathcal E\cap \{N_{2n-\Delta}(Y_{2n-\Delta})>\max(1,\tfrac12  |B_{2n}|)\}]\le 25 |B_{2n}|\exp\left(- \frac{\delta^3\Delta}{3m}\right).
		\end{equation}
		Then, by induction, we get for every $i\le n/\Delta$, 
		\begin{equation}
			\mathbf P[\mathcal E\cap \{N_{2n-i\Delta}(Y_{2n-i\Delta})>\max(1,2^{-i}|B_{2n}|)\}]\le 25 i|B_{2n}|\exp\left(- \frac{\delta^3\Delta}{3m}\right).\label{eq:129}
		\end{equation}
		Choosing $i=\lfloor n/\Delta \rfloor$ and observing that $|B_{2n}| \leq 2^i$, we obtain
		\begin{equation}
			\label{eq:130}
			\mathbf P[\mathcal E\cap \{N_{n}(Y_{n})>1]\le 25\times 4\times|B_{2n}| \log|B_{2n}| \exp\left(- \frac{\delta^3\Delta}{3m}\right).
		\end{equation}
		Plugging the two bounds \eqref{eq:127} and \eqref{eq:130} in \eqref{eq:126} concludes the proof.
	\end{proof}

	\section{Proof of Proposition~\ref{prop:1}}
	\label{central-proposition}
	
	In this section, we combine all the tools introduced in the previous sections in order to establish Proposition~\ref{prop:1}. Our goal is to prove that there exists a unique large cluster  in large balls with high probability: for every $n$ large enough, we want to show that
	\begin{equation}
		\label{eq:132}
		\mathrm P_p[B_{n/10}\lr{}\partial B_{n},~ U(n/5,n/2)]\geq 1-e^{-\sqrt{n}}.
	\end{equation}
	Let us start with two reductions of the problem.
	
	\paragraph{First reduction: a lower bound on the two-point function in long corridors.} At several places in the paper, we saw that local uniqueness is intimately related to lower bounds on the two-point function inside finite regions. Notably, Proposition \ref{prop:2} asserts that  Equation~\eqref{eq:132} holds if we can prove the lower bound
	\begin{equation}
		\label{eq:133}
		\forall p>p_c\quad  \limsup_{n\to\infty} \kappa_p(n\log^{3d} n,n)>0.
	\end{equation}

	\paragraph{Second reduction: a lower bound on the two-point function in short corridors.} In Section~\ref{sec:sharp-thresh-seed}, we proved that crossing probabilities always undergo a sharp threshold phenomenon: the probabilities jump fast from $0$ to $1$ as $p$ varies. An important consequence  of this phenomenon is quantified in Corollary~\ref{cor:3}: a lower bound on the two-point function in a corridor at parameter $p$ implies a lower bound in a longer corridor at a slightly larger parameter $p+\delta$. Using this corollary, one can see that   Equation~\eqref{eq:133} will be established if we show the following weaker statement
	\begin{equation}
		\label{eq:134}
		\forall p>p_c\quad  \limsup_{n\to\infty} \kappa_p(n/\log^{4} n,n)>0.
	\end{equation}
	Getting Equation~\eqref{eq:134} is the core of the proof. In Section~\ref{sec:lower-bound-two}, we start by explaining the main ideas in a sketch of proof. We then give a formal derivation in Section~\ref{sec:lower-bound-two-1}.

	\subsection{Lower bound on the two-point function: sketch of proof}
	\label{sec:lower-bound-two}
	In this section we give the main ideas behind the proof of Equation~\eqref{eq:134}.
	
	Proceeding by contradiction, we assume that we can fix some $p_1>p_c$ such that
	\begin{equation}
		\label{eq:94}
		\lim_{n\to\infty} \kappa_{p_1}(n/\log^{4} n,n)=0.\tag{\textbf{H}}
	\end{equation}
	In order to reach a contradiction, we rely on two main ideas:
	\begin{itemize}
		\item  Let $p_0\in (p_c,p_1)$. By monotonicity relative to $p$,  Hypothesis \eqref{eq:94} entails that the two-point function at $p_0$ is small. This allows us  to apply the Hamming distance argument (Proposition~\ref{prop:5}) to show that the crossing probability in annulus is very high. More precisely, setting $\alpha=n/\log^{4} n$, we have
		\begin{equation}
			\label{eq:91}
			\mathrm P_{p_0}[B_{\alpha}\lr{} \partial B_n] \ge 1 - e^{-\log^3 n}.   
		\end{equation}
		
		\item The second idea is to use uniqueness via sprinkling (Proposition~\ref{prop:7}): the estimate   \eqref{eq:91} gives us the first assumption needed to apply this proposition. In rough terms, \eqref{eq:91} says that, with high probability,  every ball of radius $\alpha$ within $B_n$ belongs to some large cluster; and Proposition~\ref{prop:7} states that, by sprinkling, all these clusters get connected. More precisely, we  establish that
		\begin{equation}
			\label{eq:92}
			\mathbf P[U_{p_0,p_1}(n/4,n)]\xrightarrow[n\to\infty]{}1.
		\end{equation}
		From there, it is easy to reach a contradiction: fix two points $x$ and $y$ in $B_{n/\log^4 n}$ and consider the event that
		both $x$ and $y$ are $p_0$-connected to $\partial B_n$. By the Harris--FKG inequality, this occurs with probability at least $\theta(p_0)^2$. On this event,  both  $p_0$-clusters  of $x$ and $y$ cross the annulus  $B_n\setminus B_{n/4}$ and, by $~\eqref{eq:92}$, $x$ and $y$ must be $p_1$-connected inside $B_n$ with high probability. This concludes that
		\begin{equation}
			\label{eq:93}
			\kappa_{p_1}(n/\log^4 n,n)\ge \theta(p_0)^2/2   
		\end{equation}
		for $n$ large, which contradicts \eqref{eq:94}.
	\end{itemize}

	There is a gap in the proof exposed above: to apply uniqueness via sprinkling (Proposition~\ref{prop:7}), Equation~\eqref{eq:91} is not sufficient. We also need a second assumption, namely a lower bound on the two-point function of the form
	\begin{equation}
		\label{eq:95}
		\kappa_{p_0}(3\alpha,n/\log^4 n) \ge \delta> 0,
	\end{equation}
	where $\delta>0$ is some positive constant.  It is not clear that this last equation occurs with the choice $\alpha=n/\log^4 n$. This problem is fixed by reducing the value of $\alpha$ in a suitable way: we show that there exists $\alpha$  that is small  enough for \eqref{eq:95} to hold, but also large enough to have \eqref{eq:91}.

	\subsection{Lower bound on the two-point function: formal proof}\label{sec:lower-bound-two-1}
	
	For the formal proof of Equation~\eqref{eq:134}, we rely on the following lemma.
	
	\begin{lemma}
		\label{lem:11}
		Let $p_1>p_0>p_c$. Assume that  \eqref{eq:94} holds at $p_1$. Then, there exists a sequence $(\alpha(n))$ such that 
		\begin{enumerate}
			\item\label{item:4} $\mathrm P_{p_0}[B_{\alpha(n)}\lr{} \partial B_{n/2}] \ge 1 - e^{-\log^3 (n/2)}$, for every $n$ large enough,
			\item\label{item:5} $\displaystyle \limsup_{n\to\infty} \kappa_{p_0}(3\alpha(n), n/\log^4 (n))>0$, along the sequence $n\in 12\mathbb N$.
		\end{enumerate}
	\end{lemma}
	
	\begin{proof}
		Fix  $p\in(p_c,p_0)$ and $\delta<\theta(p)^2/2$.   Let $R \ge 1$ be so that the conclusion of Lemma~\ref{lem:1} holds.
		For every fixed $n\ge 1$, we have $\kappa_{p}(0 , n)=1$ and $\lim_{k\to \infty}  \kappa_{p}(k, n)=0$. Therefore, one can define
		\begin{equation}
			\label{eq:90}
			\alpha(n)=\min\{k\: : \:  \kappa_{p}(k , n)< \delta\}.  
		\end{equation}
		By \eqref{eq:94} and monotonicity relative to $p$, we have $\lim_{n\to\infty}\kappa_{p}( n/\log^4 n, n)=0$. On the other hand, by Corollary~\ref{cor:4}, we can fix some $\chi>0$ (depending on $p$) such that  $\liminf_{n\to\infty}\kappa_{p}(\exp(\log^\chi(n)), n)>\delta$.   Hence, for every $n$ large enough, $\alpha(n)$ satisfies 
		\begin{equation}
			\label{eq:96}
			\exp(\log^\chi(n))\le \alpha(n) \le n/\log^4 n.    
		\end{equation}
		
		The parameter $\alpha(n)$ is defined as a threshold of the two-point function.  If $k\simeq \alpha(n)$ we  know that the two-point function  $\kappa_p(k,n)$ is of order $\delta$. More precisely, we have the upper bound
		\begin{equation}
			\label{eq:97}
			\kappa_p(\alpha(n),n)< \delta,
		\end{equation}
		and the lower bound
		\begin{equation}
			\label{eq:131}
			\kappa_p(\alpha(n)-1 ,n)\ge\delta. 
		\end{equation}
		In order to prove the first and second items of the lemma, we will use the upper and lower bounds above respectively.

		Since $\delta<\theta(p)^2/2$, the upper bound~\eqref{eq:97} together with Proposition~\ref{prop:5} implies that 
		\begin{equation}
			\label{eq:139}
			\mathrm P_{p}[B_{\alpha(n)}\lr{} \partial B_{n/2}]\ge 1-e^{-\log^3(n/2)}.
		\end{equation}
		for every $n$ large enough. Therefore,  Item~\ref{item:4} holds by monotonicity, as $p_0>p$.
		
		It remains to prove the second item.The  bound \eqref{eq:131} for $\lfloor n/2\log^4 n\rfloor$ gives 
		\begin{equation}
			\label{eq:153}
			\kappa_p(\alpha(\lfloor n/2 \log^4 n\rfloor)-1 ,\lfloor n/2\log^4 n\rfloor)\ge\delta
		\end{equation}
		for every $n$ large enough. We wish to obtain a stronger lower bound when we increase the parameter from $p$ to $p_0$. To achieve this, apply Corollary~\ref{cor:3}. Write  $m=\alpha(\lfloor n/2 \log^4 n\rfloor)-1$. By the lower bound \eqref{eq:96}, we have
		\begin{equation}
			\label{eq:154}
			\log m\ge \log^{\chi/2}(n) 
		\end{equation}
		for every $n$ large enough. By applying Corollary~\ref{cor:3} to $m$, $\lfloor n/2 \log^4 n\rfloor$, and $k= 10/\chi$, we obtain
		\begin{equation}
			\label{eq:141}
			\kappa_{p_0}\left(   \log^5 (n)\cdot\left (\alpha(\lfloor n/2\log^4 n\rfloor)-1\right), n/\log^4 n \right)>\delta
		\end{equation}
		for every $n$ large enough. This is almost Item~\ref{item:5}, except that one needs to replace the first parameter by $\alpha(n)$. Since $\alpha(n)$ grows sublinearly, one can do this replacement along an infinite subsequence of good scales, defined as follows. A   scale $n$ is said to be \defini{good} if	\begin{equation}\label{eq:142}
			3 \alpha(n)\leq \log^5 (n)\cdot \left( \alpha(\lfloor n/2\log^4 n\rfloor)-1\right).
		\end{equation}
		It is  elementary to check that there exist infinitely many good scales. Indeed, if \eqref{eq:142} fails for every $n$ large enough, then we have $\limsup \frac{\alpha(n)}{n}>0$, which contradicts  the upper bound~\eqref{eq:96}. Now, if a scale $n$  is good, Equation~\eqref{eq:141} implies that 
		\begin{equation}
			\kappa_{p_0}\left(   3\alpha(n), n/\log^4 n \right)>\delta.        
		\end{equation}
		Since there are infinitely many good scales, this implies that  $\limsup \kappa_{p_0}\left(   \alpha(n), n/\log^4 n \right)\ge\delta$. In order to get the $\limsup$  with $n$ being a multiple of 12,  we can just notice that all the proof of Item~\ref{item:5} can be achieved exactly the same way with the additional assumption that $n$ is always a multiple of $12$.
	\end{proof}

	We are now ready to prove Equation~\eqref{eq:134}. Recall that once this equation is proved, Proposition~\ref{prop:1} follows.
	
	\begin{proof}[Proof of Equation~\eqref{eq:134}]
		We proceed by contradiction. Assume that we can take $p_1>p_c$ such that \eqref{eq:94} holds. Fix $p_0\in (p_c,p_1)$. Consider a sequence $(\alpha(n))$ as in Lemma~\ref{lem:11}. We can pick $\delta_0>0$ such that
		\begin{equation}\label{eq:157}
			\left\{
			\begin{array}[c]{l}
				\mathrm P_{p_0}[B_{\alpha(12n)}\lr{} \partial B_{6n}] \ge 1 - e^{-\log^3 (6n)}\\
				\kappa_{p_0}(3 \alpha(12n),12n/\log^4 (12 n))\ge \delta_0 
			\end{array}\right.
		\end{equation}
		for infinitely many $n$. Let $n$ be such that \eqref{eq:157} holds. Applying Proposition~\ref{prop:7} with $\eta=|B_{2n}| e^{-\log^3(6n)}$, $\delta=\min(\delta_0,p_1-p_0)$, $k=\alpha(12n)$, and $m= 12n/\log^4(12n)$, we obtain
		\begin{equation}
			\mathbf P[U_{p_0,p_1}(n,4n)]\geq 1-e^{-\log^2 n},
		\end{equation}
		provided $n$ is chosen large enough.
		Since the equation above holds for infinitely many $n$, we deduce that
		\begin{equation}
			\label{eq:300}
			\limsup_{n\to\infty}\mathbf P[U_{p_0,p_1}(n,4n)]=1.
		\end{equation}

		As in the proof of Lemma~\ref{lem:8}, we can use the ``sprinkled'' uniqueness estimate \eqref{eq:300} to deduce a lower bound on the two-point function. The origin $o$ is connected to another fixed vertex $x\in B_{n}$ by a $p_1$-open path in $B_{10n}$ as soon as both $o$ and $x$ are $p_0$-connected to $\partial B_{10n}$ and the uniqueness event $U_{p_0,p_1}(n,4n)$ occurs. Using the Harris--FKG Inequality and the union bound, for every $n\ge R$ we have
		\begin{align}
			\label{eq:145}
			\kappa_{p_1}(n,10n)&\ge \min_{x\in B_{n}}\mathbf P[\{o\lr[p_0]{}\partial B_{10n},x\lr[p_0]{}\partial B_{10n}\}\cap U_{p_0,p_1}(n,4n)]          \\
			&\ge \theta(p_0)^2 - \mathbf P[U_{p_0,p_1}(n,4n)^c].
		\end{align}
		It follows that $\limsup_{n\to\infty}\kappa_{p_1}(n,10n)>0$, which contradicts our hypothesis \eqref{eq:94}. \end{proof}
	
	\section{Proof of sharpness: coarse grains without rescaling}
	\label{sec:proof-theor-1}
	
	In this section, we prove Theorem~\ref{thm:1} and Theorem~\ref{thm:2}. Both proofs involve the use of Proposition~\ref{prop:1} as the building block of a Peierls-type argument.
	
	On the hypercubic lattice $\mathbb Z^d$, a natural way to conclude from Proposition~\ref{prop:1} is to use a standard renormalisation procedure: we look at blocks at a larger scale, and these blocks can be seen as vertices of a rescaled copy of $\mathbb Z^d$. This strategy relies on a self-similarity property of  $\mathbb Z^d$ that does not hold for general groups. In order to overcome this difficulty, we will keep the definition of large blocks (sometimes called ``coarse grains''), but we will replace the rescaling argument by the use of a $k$-independent\footnote{Recall that a site percolation process $X$ is \defini{$k$-independent} if for any two sets of vertices $V_1$ and $V_2$ such that $\forall (v_1,v_2)\in V_1\times V_2,~d(v_1,v_2)>k$, the restrictions of $X_{V_1}$ and $X_{V_2}$ are independent.} process with sufficiently high marginals. To each vertex $v$ of $G$, we attach a block consisting of vertices at distance $k$ around $v$ and  define a new process on the graph $G$ without rescaling it. Contrary to the renormalisation approach, a given block overlaps $\simeq k^d$ other blocks, inducing some dependencies on the new process. 
	
	In order to apply this strategy, we rely on the following lemma, which ensures that a finite-range site percolation with sufficiently high marginals does not have large closed cutsets. Recall that a  minimal cutset between $o$ and $\infty$ is a subset $\Pi\subset V$ such that the connected component of $o$ in $V\setminus \Pi$ is finite, and it is minimal for the inclusion among all sets satisfying this property. For every $n\ge1$, write $\mathcal P_{\ge n}$ for the set of all minimal cutsets $\Pi$ between $o$ and $\infty$ that satisfy $|\Pi|\ge n$.

	\begin{lemma}\label{lem:19}
		Let $R$ be so that the conclusion of  Lemma~\ref{lem:1} holds. For every $k\ge R$, there exists $c=c(k,G)>0$ such that the following holds.  Let $X$ be a $2k$-independent site percolation on $G$ with marginals satisfying  $\mathbb P[X(v)=1]\geq 1-\frac1{2e|B_{5k}|}$  for every $v\in V(G)$. Then, for every $n\ge 1$, we have
		\begin{equation}
			\mathbb P [\exists \Pi\in\mathscr{P}_{\ge n},~\Pi\text{ is closed in } X]\leq e^{-cn}.
		\end{equation}
	\end{lemma}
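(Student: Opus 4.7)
The statement cries out for a Peierls-type estimate: enumerate all minimal cutsets $\Pi\in\mathscr P_{\ge n}$ and control, for each, the probability that it is closed in $X$. The interplay between the geometry of minimal cutsets (Lemmas~\ref{lem:1}--\ref{lem:2}) and the $2k$-independence of $X$ is exactly what makes such a bound available.

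\textit{Counting the cutsets.} By Lemma~\ref{lem:1}, every minimal cutset $\Pi$ separating $o$ from $\infty$ is $R$-connected, and by Lemma~\ref{lem:2}, a cutset of size $m$ must meet $B_{Rm}$. Viewing such $\Pi$ as a rooted connected subset, of size $m$ and root in $B_{Rm}$, of the auxiliary graph $\widehat G$ on $V$ whose edges join pairs at graph-distance at most $R$ (so $\widehat G$ has maximum degree $|B_R|$), the standard spanning-tree encoding of connected subgraphs in a bounded-degree graph bounds the number of minimal cutsets of size $m$ by $|B_{Rm}|\cdot (e|B_R|)^m \le e^{C_0\, m}$ for some constant $C_0=C_0(k,G)$; polynomial growth absorbs the pre-factor $|B_{Rm}|\asymp m^d$ into the exponent.

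\textit{Probability estimate via stochastic domination.} For the probability side, the idea is to produce, uniformly in $\Pi$ with $|\Pi|=m$, an estimate of the form $\mathbb P[\Pi\text{ closed in }X]\le q^m$ with $q<e^{-(C_0+1)}$. This is done through a Liggett--Schonmann--Stacey-type stochastic-domination argument: the marginal hypothesis $\mathbb P[X(v)=1]\ge 1-\tfrac{1}{2e|B_{5k}|}$ together with the $2k$-independence allows us to couple $X$ from below with a Bernoulli site percolation $Y$ of parameter $p$ satisfying $1-p<e^{-(C_0+1)}$. The factor $|B_{5k}|$ in the marginal hypothesis is precisely what appears when one coarse-grains $V$ at scale $k$: each coarse cell has at most $|B_{5k}|$ vertices in its $2k$-thickening, so a union bound turns the marginal assumption into a usable per-cell bound, and the assumption $k\ge R$ ensures that this independence scale dominates the $R$-connectedness scale of the cutsets counted in the previous step. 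Under this Bernoulli domination,
\begin{equation*}
\mathbb P[\Pi\text{ closed in }X]\le \mathbb P[\Pi\text{ closed in }Y]=(1-p)^m<e^{-(C_0+1)m}.
\end{equation*}

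\textit{Peierls sum.} Combining the two ingredients via the union bound yields
\begin{equation*}
\mathbb P\bigl[\exists\,\Pi\in\mathscr P_{\ge n},\ \Pi\text{ closed in }X\bigr]\le \sum_{m\ge n} e^{C_0 m}\cdot e^{-(C_0+1)m}=\sum_{m\ge n} e^{-m}\le 2\,e^{-n},
\end{equation*}
which is the desired exponential decay (for any $c\in(0,1)$, up to adjusting constants). The main obstacle is clearly the stochastic-domination step: the naive attempt using only greedy packing, namely extracting from $\Pi$ a subset $\Pi'$ with $|\Pi'|\ge |\Pi|/|B_{2k}|$ and pairwise distance $>2k$, and then using $2k$-independence on $\Pi'$, yields merely an estimate of the form $\bigl(\tfrac{1}{2e|B_{5k}|}\bigr)^{|\Pi|/|B_{2k}|}$, which is not sharp enough to beat the counting exponential $e^{C_0 m}$. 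The whole point of the specific numerology $\tfrac{1}{2e|B_{5k}|}$ (together with the condition $k\ge R$) is to upgrade this to a genuine Bernoulli comparison field whose parameter is high enough to dominate the $(e|B_R|)^m$ cutset count.
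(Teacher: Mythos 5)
You correctly pin down the bottleneck and correctly observe that the crude estimate $\left(\tfrac{1}{2e|B_{5k}|}\right)^{|\Pi|/|B_{2k}|}$, obtained by extracting a $2k$-separated subset of $\Pi$ and applying $2k$-independence, does not beat the entropy factor $(e|B_R|)^{|\Pi|}$ in a union bound over fine-scale cutsets. However, the proposed Liggett--Schonmann--Stacey fix is a genuine gap: the domination $X\succeq\mathrm{Bernoulli}(p)$ with $1-p<e^{-(C_0+1)}$ does not follow from the hypotheses and is in fact false in general. LSS-type bounds give a dominated parameter with $1-p\asymp\epsilon^{1/(\Delta+1)}$, where $\epsilon=\tfrac{1}{2e|B_{5k}|}$ is the marginal defect and $\Delta\asymp|B_{2k}|$ is the degree of the $2k$-dependency graph; with $\epsilon\asymp k^{-5d}$ and $\Delta\asymp k^d$ one gets $1-p\to 1$ as $k\to\infty$, not $1-p\to 0$. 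One can also see this directly: take $X$ constant on disjoint balls of radius $k$, the ball-values being i.i.d.\ and equal to $0$ with probability $\tfrac{1}{2e|B_{5k}|}$. This field is $2k$-independent with the required marginal, yet for a set $\Pi$ packed into $|\Pi|/|B_k|$ such balls one has $\mathbb P[\Pi\text{ closed}]\approx\left(\tfrac{1}{2e|B_{5k}|}\right)^{|\Pi|/|B_k|}$, which for large $k$ far exceeds $(1-p)^{|\Pi|}$ for any $p$ bounded away from $0$, so no uniform Bernoulli comparison field exists.

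The paper's argument circumvents this by changing \emph{what is enumerated}, not by strengthening the per-cutset probability bound. From each $\Pi$ one extracts a maximal $2k$-separated subset $\Pi'\subset\Pi$ (so $m:=|\Pi'|\ge|\Pi|/|B_{5k}|$); since $\Pi$ is $R$-connected and $R\le k$, this $\Pi'$ is a $5k$-connected subset of the auxiliary graph $G'$ whose edges link pairs at distance $\le 5k$, of degree $D=|B_{5k}|$, rooted in $B_{RDm+2k}$. The union bound is then run over $\Pi'$, and there the accounting closes: the number of candidates of size $m$ is at most $|B_{RDm+2k}|(eD)^m$, while $2k$-independence together with the $2k$-separation of $\Pi'$ gives an exact product bound $\mathbb P[\Pi'\text{ closed}]\le\left(\tfrac{1}{2eD}\right)^m$ with no stochastic domination whatsoever. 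The factors $(eD)^m$ and $(2eD)^{-m}$ cancel to $2^{-m}$ and the polynomial prefactor is absorbed. The numerology $\tfrac{1}{2e|B_{5k}|}$ is thus tuned to match the entropy of $5k$-connected subsets, not to reach any LSS threshold. Your coarse-graining instinct is exactly right, but the coarse scale must enter in the enumeration step itself — count the $2k$-nets $\Pi'$, not the cutsets $\Pi$ — rather than being postponed to a domination step the hypotheses cannot support.
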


	\begin{proof}
		Fix $k\ge R$ and set $D=|B_{5k}|$. Let $\Pi\in\mathscr{P}_{\ge n}$. Since $R$ satisfies the conclusion of Lemma~\ref{lem:1}, the set $\Pi$ is $R$-connected. Besides, by Lemma~\ref{lem:2}, $\Pi$ intersects the ball of radius $R|\Pi|$. 
		
		Say that a set of vertices is $r$-separated if any two distinct vertices $u,v$ of it satisfy $d(u,v)> r$.
		Let $\Pi'$ be a $2k$-separated subset of $\Pi$ that is maximal for inclusion among all such subsets. By maximality, any ball of radius $2k$ centred at a vertex of $\Pi$ must contain a vertex of $\Pi'$. Therefore, the quantity $m:=|\Pi'|$ is at least $|\Pi|/D$. Furthermore, we claim that  $\Pi'$ satisfies the following properties:
		\begin{itemize}
			\item $\Pi'$ is $2k$-separated,
			\item $|\Pi'|=m$,
			\item    $\Pi'$ is $5k$-connected,    
			\item $\Pi'$ intersects $B_{RDm+2k}$. 
		\end{itemize}
		The first two items follow from the definitions above. The third and fourth items follow from the properties of  $\Pi$ together with the hypothesis $k\ge R$ and the observation that $\Pi'$ intersects  any ball of radius $2k$ centred at a vertex of $\Pi$.

		For every $m\ge 1$, write $\mathscr P'_{m}$ for the collection of all subsets of $V$ satisfying the four properties listed above. By definition, all the elements of  $\mathscr P'_{m}$ intersect $B_{RDm+2k}$ and they are  connected subsets of the  graph $G'=(V,E')$ with $E'=\{\{x,y\}~:~d(x,y)\leq 5k\}$. Since the degree of $G'$ is  $D$, standard bounds on the number of connected subsets of a graph (see e.g. \cite[Problem 45]{bollobas2006art}) give
		\begin{equation}
			\label{eq:146}
			|\mathscr P'_{m}|\le |B_{RDm+2k}|(e D)^{m}.
		\end{equation}
		From this bound, we get
		\begin{align}\label{eq:147}
			&	\mathbb P [\exists \Pi\in\mathscr{P}_{\geq n},~\Pi\text{ is closed in } X]\\
			&\qquad\leq \sum_{m\ge n/D}  \mathrm P[\exists \Pi'\in\mathscr{P}'_m,~\Pi'\text{ is closed in } X]\\
			&\qquad\le  \sum_{m\ge n/D}|\mathscr P'_m| \cdot \left(2eD\right)^{-m}\text{~~~~by our assumptions on $X$}\\
			&\qquad\le\sum_{m\ge n/D}|B_{RDm+2k}| \cdot 2^{-m} .
		\end{align}
		Since the volume of the ball $B_{Dn}$ grows at most polynomially in $n$, the proof follows from the estimate above. 
	\end{proof}
	
	\begin{proof}[Proof of Theorems~\ref{thm:1} and~\ref{thm:2} ]
		Let $p>p_c(G)$ and let $\omega$ be a Bernoulli bond percolation of parameter $p$ on $G$. By Proposition~\ref{prop:1} and by polynomial growth of $G$, we can choose $k$ such that
		\begin{equation}
			\mathrm P_p[B_{k/10}\lr{}\partial B_{k},U(k/5,k/2)]\geq 1-\frac1{2e|B_{5k}|}.
		\end{equation}
		From now on and until the end of the proof, we fix $k$ as above. For each $v\in V(G)$, we will denote by $U_v=U_v(k/5,k/2)$ the event $U(k/5,k/2)$ centred at $v$ instead of $o$. We define the site percolation  process $X$ on $G$ by setting
		\begin{equation}\label{eq:148}
			X(v)=
			\begin{cases}
				1&\text{ if both  $B_{k/10}(v)\lr{}\partial B_{k}(v)$ and $U_v$ occur,}\\
				0&\text{ otherwise.}
			\end{cases}
		\end{equation}
		Notice that $X$ is a $2k$-independent site percolation with marginals satisfying
		\begin{equation}
			\mathrm P_p[X(v)=1]\geq 1-\frac1{2e|B_{5k}|}.
		\end{equation} 
		Let us consider $n>|B_k|$. We denote by $C_o$ the cluster of $o$ in $\omega$. Assume that $|C_o|\geq n$. In this case, the key observation about the process $X$ is that having an infinite open path in $X$ touching $C_o$ entails the existence of a path between $o$ and $\infty$ in $\omega$. Let us explain this fact. Since $|C_o|\geq n>|B_k|$, for every $v\in C_o$, we have that $v\lr{}\partial B_{k}(v)$ in $C_o$. Then, let us assume that there is an infinite self-avoiding path $v_1,v_2,\ldots$ with $v_1\in C_o$ and such that $X(v_i)=1$ for all $i\geq 1$.  This implies that, for every $i\geq 1$, one can fix a cluster $C_i$ in $\omega\cap B_{k}(v_i)$ that connects $B_{k/10}(v_i)$ to $\partial B_{k}(v_i)$. Notice that for every $i\geq 1$, $C_i$ and $C_{i+1}$ both connect $B_{k/5}(v_i)$ to $B_{k/2}(v_i)$. Since the event $U_{v_i}(k/5,k/2)$ holds for every $i$, we have that all the $C_i$ are forced to be connected to each other in $\omega$. If we choose $C_1$ to be the connected component of $v_1$ in $C_o\cap B_k(v_1)$, we have that $o$ is connected to $\infty$ in $\omega$. Since $\{o\lr{}\partial B_n\}\subset \{|C_o|>n\}$, this observation implies that
		\begin{equation}\label{eq:149}
			\mathrm P_p[o\lr{}\partial B_n,o\nlr{}\infty] \leq\mathrm P_p[C_o\nlr[X]{}\infty, n\leq \mathrm{diam}(C_o)<\infty].
		\end{equation}
		If the event on the right-hand side of \eqref{eq:149} is satisfied, then, by Lemma~\ref{lem:3}, there exists an $X$-closed cutset $\Pi$ disconnecting $o$ and $\infty$ of diameter at least $n/2$. Since $\Pi$ is $R$-connected, this yields $|\Pi|\geq \frac{n}{2R}$. One can thus use Lemma~\ref{lem:19} to upper-bound the right-hand side of \eqref{eq:149} by  $\exp(-\frac{cn}{2R})$, for some constant $c>0$. This concludes the proof of Theorem~\ref{thm:1}.
		
		Regarding the proof of Theorem~\ref{thm:2}, we have, similarly to before, that for $n>|B_k|$,
		\begin{equation}\label{eq:150}
			\mathrm P_p[n\leq|C_o|<\infty]=\mathrm P_p[C_o\nlr[X]{}\infty, n\leq \mathrm|C_o|<\infty].
		\end{equation}
		In this case, if the event on the right-hand side of \eqref{eq:150} is satisfied, then Lemma~\ref{lem:4} 
		yields an $X$-closed cutset disconnecting $o$ and $\infty$ of size larger than $cn^{\frac{d-1}{d}}$. Hence, using Lemma~\ref{lem:19}, the right-hand side of \eqref{eq:150} is upper bounded by  $\exp(-c'n^\frac{d-1}{d})$ for some positive constant $c'$. This concludes the proof of Theorem~\ref{thm:2}.
	\end{proof}
	
	\newcommand{\etalchar}[1]{$^{#1}$}

	%\pagebreak
	
	%\phantom{ l} ?
	
\end{document}